    \providecommand{\showkeys}{}
\def\blfootnote{\xdef\@thefnmark{}\@footnotetext} 
\newcommand{\gitversionfootnote}{\blfootnote{Version: \gitauthsdate, \gitshash}}
\newcommand{\gitversionfootnote}{}
  \newcommand{\fig}[2]{
    \IfFileExists{#1.pdf_tex}{
      \def\svgwidth{#2}\input{#1.pdf_tex}
    }{
      \frame{Missing figure ``#1.pdf''}
      \message{LaTeX Warning: Missing figure ``#1.pdf'' on input line \the\inputlineno}
    }
  }
  \newcommand{\fig}[2]{\frame{PDF Figure here}}	
\newcommand{\dimension}{n}
\DeclareMathOperator{\dist}{dist}
\DeclareMathOperator{\interior}{int}
\DeclareMathOperator{\trace}{trace}
\DeclareMathOperator*{\esssup}{ess\,sup}
\DeclareMathOperator*{\essinf}{ess\,inf}
\DeclareMathOperator*{\argmax}{arg\,max}
\DeclareMathOperator{\divo}{div}
\newcommand{\diff}[1]{\;d{#1}}
\newcommand{\ov}[1]{\frac{1}{#1}}
\newcommand{\abs}[1]{\left|#1\right|}
\newcommand{\pth}[1]{\left(#1\right)}
\newcommand{\bra}[1]{\left[#1\right]}
\newcommand{\set}[1]{{\left\{#1\right\}}}
\newcommand{\at}[2]{{{\left.{#1}\right|}_{#2}}}
\newcommand{\ang}[1]{{\left\langle#1\right\rangle}}
\newcommand{\norm}[1]{\left\|#1\right\|}
\newcommand{\cl}[1]{\overline{#1}}	
\newcommand{\al}{\ensuremath{\alpha}}
\newcommand{\be}{\ensuremath{\beta}}
\newcommand{\de}{\ensuremath{\delta}}
\newcommand{\e}{\ensuremath{\varepsilon}}
\newcommand{\vp}{\ensuremath{\varphi}}
\newcommand{\la}{\ensuremath{\lambda}}
\newcommand{\ta}{\ensuremath{\theta}}
\newcommand{\R}{\ensuremath{\mathbb{R}}}
\newcommand{\Rd}{\ensuremath{{\mathbb{R}^{\dimension}}}}
\newcommand{\Rn}{\Rd}
\newcommand{\Z}{\ensuremath{\mathbb{Z}}}
\newcommand{\N}{\ensuremath{\mathbb{N}}}
\newcommand{\T}{\ensuremath{\mathbb{T}}}
\newcommand{\Tn}{{\T^\dimension}}
\DeclareMathOperator*{\halflimsup}{\star-limsup}
\DeclareMathOperator*{\halfliminf}{\star-liminf}
\newcommand{\Lip}{{\rm Lip}}
\definecolor{grey}{rgb}{0.6,0.6,0.6}
\numberwithin{equation}{section}
\newtheorem{theorem}{Theorem}[section]
\newtheorem{lemma}[theorem]{Lemma}
\newtheorem{proposition}[theorem]{Proposition}
\newtheorem{corollary}[theorem]{Corollary}
\newtheorem{definition}[theorem]{Definition}
\theoremstyle{definition}
\newtheoremstyle{remarkstyle}
  {3pt}
  {3pt}
  {\small}
  {}
  {\bfseries}
  {.}
  { }
  {}
\theoremstyle{remarkstyle}
\newtheorem{remark}[theorem]{Remark}
\newtheorem{example}[theorem]{Example}
\DeclareMathOperator{\Wulff}{Wulff}
\DeclareMathOperator{\nbd}{\mathcal U}
\DeclareMathOperator{\pair}{Pair}
\DeclareMathOperator{\dom}{\mathcal D}
\newcommand{\parahead}[1]{\bigskip\noindent\textbf{{#1}.}}
\title[Anisotropic total variation flow]
{Anisotropic total variation flow of non-divergence type
on a higher dimensional torus}
\author[M.-H. Giga]{Mi-Ho Giga}
\author[Y. Giga]{Yoshikazu Giga}
\author[N. Po\v{z}\'{a}r]{Norbert Po\v{z}\'{a}r}
\date{}
\subjclass[2010]{
35K67, 35D40, 35K55, 35B51, 35K93
}
\keywords{phase transitions, curvature flows,
crystalline mean curvature, anisotropic total variation flow,
viscosity solutions, comparison theorems}
\begin{document}

\begin{abstract}
We
extend the theory of viscosity solutions
to a class of very singular nonlinear parabolic problems
of non-divergence form
in a periodic domain of an arbitrary dimension
with diffusion given by an anisotropic
total variation energy.
We give a proof of a comparison principle,
an outline of a proof of the stability under
approximation by regularized parabolic problems,
and an existence theorem for general continuous initial data,
which extend the results recently obtained by the authors.

\end{abstract}

\maketitle

\gitversionfootnote


\section{Introduction}

The goal of this note is the announcement 
of the results in \cite{GGP13}
and their extension
to smooth anisotropic total variation energies.
Furthermore,
we give a slightly different
exposition of the technically demanding proof of
the comparison theorem.

In an arbitrary dimension $n\geq 1$
we consider the following problem
for a function $u(x,t): \Tn \times (0,T) \to \R$
on the torus $\Tn := \Rn \setminus \Z^n$
for some $T > 0$:
\begin{align}
\label{tvf}
    &u_t + F\pth{\nabla u,
    \divo \partial W\pth{\nabla u}} = 0
        & &\text{in $Q := \Tn \times (0,T)$,}
\intertext{with the initial condition}
\label{tvf-ic}
    &\at{u}{t=0} = u_0
        && \text{on $\Tn$.}
\end{align}
In this paper we assume that
\begin{align}
\label{W-regularity}
W \in C^2(\Rn \setminus \set0), \qquad \text{$W^2$ is strictly convex,}
\end{align}
and that $W$ is a convex one-homogeneous function,
positive outside of the origin, i.e.,
there exists a positive constant $\la_0$
such that
\begin{align}
\label{W-bound}
W(a p) = a W(p) \geq \la_0 a\abs{p}  \qquad \text{for all $p \in \Rn$, $a \geq 0$}.
\end{align}
Furthermore, we assume that
$F: \Rn \times\R \to \R$
is a continuous function,
non-increasing in the second variable, i.e.,
\begin{align}
\label{ellipticity}
F(p, \xi) &\leq F(p, \eta) &&\text{for $\xi,\eta\in \R$, $\xi \geq \eta$,
$p\in\Rn$.}
\end{align}
This makes the operator in \eqref{tvf}
\emph{degenerate parabolic}.

The symbol $\partial W$ denotes the subdifferential of $W$.
In general, the subdifferential of a convex lower semi-continuous function
$\vp$ on a Hilbert space $H$
endowed with a scalar product $\ang{\cdot, \cdot}_H$
is defined as the set
\begin{align*}
\partial \vp(x) := \set{v \in H : \vp(x + h) - \vp(x) \geq \ang{h, v}_H
\text{ for all $h \in H$}} \qquad x \in H.
\end{align*}
Since $W$ is not differentiable at the origin,
$\partial W(0)$
is not a singleton
and therefore an extra care has to be taken
when defining the meaning of the term
$\divo \partial W(\nabla u)$.
In fact,
we shall understand the term $\divo \partial W \pth{\nabla u}$
through
the subdifferential of the anisotropic total variation energy
on the Hilbert space $L^2(\Tn)$,
\begin{align}
\label{tv-energy}
E(\psi) :=
\begin{cases}
\int_\Tn W(\nabla \psi) & \psi \in L^2(\Tn) \cap BV(\Tn),\\
+\infty & \psi \in L^2(\Tn) \setminus BV(\Tn),
\end{cases}
\end{align}
where $BV(\Tn)$ is the space of functions of bounded variation
on $\Tn$.
We shall clarify this relation in Section~\ref{sec:nonlocal-curvature}.
Let us introduce the domain of the subdifferential $\partial E$
of the energy $E$ on $L^2(\Tn)$,
namely
\begin{align*}
\dom(\partial E) := \set{\psi \in L^2(\Tn):
\partial E(\psi) \neq \emptyset}.
\end{align*}
The problem \eqref{tvf}
can be written more rigorously as
\begin{align}
u_t + F(\nabla u, -\partial^0 E(u(\cdot, t))) = 0,
\end{align}
where $\partial^0 E$ is the minimal section (canonical restriction)
of the subdifferential $\partial E$ defined
for $\psi \in \dom(\partial E)$ as
\begin{align*}
\partial^0 E(\psi) \in \partial E(\psi)
\text{ such that }
\norm{\partial^0 E(\psi)}_{L^2(\Tn)} = \min_{v \in \partial E(\psi)} \norm{v}_{L^2(\Tn)}.
\end{align*}
Clearly $\partial^0 E(\psi)$ is well-defined and unique since
$\partial E(\psi)$ is a nonempty closed convex subset of $L^2(\Tn)$
whenever $\psi \in \dom(\partial E)$.

\parahead{Motivation}
The prototypical example
of \eqref{tvf}
is the total variation flow \cite{ACM04}
\begin{align}
\label{total-variation-flow}
u_t = \divo \pth{\frac{\nabla u}{\abs{\nabla u}}},
\end{align}
since $\partial W(\nabla u) = \set{{\frac{\nabla u}{\abs{\nabla u}}}}$
for $W(p) = \abs{p}$
when $\nabla u \neq 0$,
or more generally the anisotropic total variation
flow \cite{ACM02a,ACM02b,Moll05}.
This problem also explains the interpretation of $\divo \partial W(\nabla u)$
as the minimal section of $-\partial E(u)$.
Indeed, problem \eqref{total-variation-flow}
is formally the subdifferential inclusion
\begin{align*}
\begin{cases}
u_t \in - \partial E(u(t)) & t > 0,\\
u(0) = u_0 \in L^2(\Tn).
\end{cases}
\end{align*}
The theory of monotone operators due
to K\=omura \cite{Ko} and Br\'ezis \cite{Br} yields the existence of a unique
solution $u \in C([0,T], L^2(\Tn))$ that is moreover,
for all $t \in (0,T)$,
right-differentiable, $u(t) \in \dom(\partial E)$
and
\begin{align*}
\frac{d^+ u}{dt}(t) = - \partial^0 E(u(t))\qquad \text{for $t \in (0,T)$.}
\end{align*}

Nevertheless, our main motivation for the study of problem \eqref{tvf}
in its general non-divergence form
comes from the models of crystal growth.
Let us outline how problem \eqref{tvf}
can be heuristically derived as the graph formulation
of the motion of a surface by
the anisotropic crystalline curvature of a particular form.
Following the notation of \cite{BNP01a,BCCN06,B10},
we consider the surface energy functional
\begin{align}
\label{F-surface}
\mathcal{F}(\Gamma) := \int_\Gamma \phi^\circ(\nu) \diff{\mathcal H^{n}}
\end{align}
that measures the surface energy of the surface
$\Gamma = \partial K \subset \R^{n+1}$
of a body $K \subset \R^{n+1}$
with the unit outer normal vector $\nu$.
Here $\mathcal H^n$ is the $n$-dimensional Hausdorff measure
and $\phi^\circ$ is
a convex one-homogenous function
positive outside of the origin
given
as
\begin{align}
\label{phi-circ}
\phi^\circ(\eta) &= W(-p) + \abs{\eta_{n+1}}
&&\text{for all $\eta = (p, \eta_{n+1}) \in \R^{n+1}$}.
\end{align}
The Wulff shape of this surface energy is
the one-level set
\begin{align*}
\Wulff_{\phi} := \set{\eta \in \R^{n+1} :
    \phi(\eta) \leq 1}
\end{align*}
of the dual function
\begin{align*}
\phi(\xi) := \sup \set{\xi \cdot \eta: \eta \in \R^{n+1},
\ \phi^\circ(\eta) \leq 1}.
\end{align*}
Note that this makes $\phi^\circ$ the support function of $\Wulff_{\phi}$.
A simple computation shows that
\begin{align*}
\phi(\xi) = \max \set{W^\circ(-x), \abs{\xi_{n+1}}} \qquad
\xi = (x, \xi_{n+1}) \in \Rn\times\R,
\end{align*}
where
\begin{align*}
W^\circ(x) := \sup \set{x \cdot p: p\in\Rn,\ W(p) \leq 1}.
\end{align*}
Setting
\begin{align*}
\mathcal W := \set{x \in \Rn: W^\circ(x) \leq 1},
\end{align*}
we observe that
the Wulff shape of $\phi^\circ$
is a cylinder of length $2$ with base $-\mathcal W$,
that is,
\begin{align*}
\Wulff_{\phi} = (-\mathcal W) \times [-1, 1].
\end{align*}
The assumptions \eqref{W-regularity} and 
\eqref{W-bound} on $W$ guarantee that $W^\circ$
also satisfies \eqref{W-regularity} and \eqref{W-bound}
(possibly with a different $\la_0$).
In particular,
$\mathcal W$ is a strictly convex, compact set
with a $C^2$ boundary containing the origin
in its interior.

The first variation of the functional
$\mathcal F$
in \eqref{F-surface}
is called the crystalline mean curvature \cite{BNP01a,BNP01b}
\begin{align}
\label{mean-curvature}
\kappa_{\phi} := -\divo_{\phi,\tau} n_\phi^{\rm min},
\end{align}
where $\divo_{\phi, \tau}$ is the tangential divergence on $\Gamma$
with respect to $\phi$,
introduced in \cite{BNP01a},
and $n_\phi^{\rm min}$ is a so-called Cahn-Hoffman vector field
on $\Gamma$ that minimizes the norm of $\divo_{\phi,\tau} n_\phi$
in $L^2(\Gamma)$
with weight $\phi^\circ(\nu_\Gamma(\xi))$
among all Cahn-Hoffman vector fields $n_\phi$.
A Cahn-Hoffman vector field
is any vector field on $\Gamma$ that satisfies
$n_\phi(\xi) \in \partial \phi^\circ(\nu(\xi))$
where $\nu(\xi)$ is the unit outer normal vector of $K$
at $\xi$.
Since $\phi^\circ$ is not differentiable everywhere,
the vector field $n_\phi^{\rm min}$ might not be unique
but $\divo_{\phi, \tau} n_\phi^{\rm min}$ is unique \cite{BNP01a, GPR}.
We use a sign convention different from \cite{BNP01a} so that $\kappa_\phi$ equals to the conventional mean curvature in the direction of $\nu$
when $\phi(\xi) = |\xi|$.

Consider now a surface $\Gamma(t) \subset \R^{n+1}$, $t \geq 0$, that can be expressed as the graph
of a sufficiently smooth $\Z^n$-periodic function $u : \Tn\times \R \to \R$:
\begin{align*}
\Gamma(t) = \set{(x, u(x, t)) : x\in \Rn}\qquad t \geq 0,
\end{align*}
which is the boundary of the (crystal) body
$K(t) := \set{(x, \xi_{n+1}) : \xi_{n+1} < u(x,t)}$.
In the graph case, $\nu(\xi)$ for $\xi \in \Gamma(t)$
has the simple form \cite{Giga06}
\begin{align*}
\nu(x, u(x,t)) = \frac{(-\nabla u, 1)}{\sqrt{1 + \abs{\nabla u}^2}}.
\end{align*}
Using the definition of $\phi^\circ$ in \eqref{phi-circ},
we have the expression
\begin{align*}
\partial \phi^\circ(p, \eta_{n+1}) = \set{(x, 1) :
x\in -\partial W(-p)} \qquad \text{for $\eta_{n+1} > 0$.}
\end{align*} 
Therefore $n_\phi(\xi) = (-z_W(x), 1)$ for some vector field
$z_W(x) \in \partial W(\nabla u(x,t))$, $x \in \Rn$,
and the expression \eqref{mean-curvature} reduces 
for graphs $\Gamma(t)$ to
\begin{align*}
\kappa_\phi = \divo z_W^{\rm min}(x),
\end{align*}
where $\divo$ is the divergence on $\Tn$
and $z_W^{\rm min}$ minimizes the $L^2$-norm of $\divo z_W$
among all vector fields $z_W(x) \in \partial W(\nabla u(x))$ a.e.
such that $\divo z_W \in L^2(\Tn)$.
It turns out that $-\kappa_\phi$ coincides with the
minimal section of the total variation energy \eqref{tv-energy},
see Section~\ref{sec:subdifferential},
and therefore we shall formally write
\begin{align*}
\kappa_\phi = \divo \partial W(\nabla u)\quad
(= -\partial^0 E(u(\cdot,t))).
\end{align*}
The motion of $\Gamma(t)$ by the crystalline mean curvature
$\kappa_\phi$
can be written as
\begin{align}
\label{motion-by-mean-curv}
V = \kappa_\phi,
\end{align}
where $V$ is the normal velocity of $\Gamma(t)$
that can be expressed in terms of the derivatives of $u$ as
\cite{Giga06}
\begin{align*}
V = \frac{u_t}{\sqrt{1 + \abs{\nabla u}^2}}.
\end{align*}
Thus we can formally rewrite \eqref{motion-by-mean-curv}
for graphs as
\begin{align*}
u_t = \sqrt{1 + \abs{\nabla u}^2} \divo \partial W(\nabla u),
\end{align*}
which is not of divergence form,
but obviously can be cast in the form of \eqref{tvf}.

\parahead{Literature overview}
The motion by anisotropic crystalline mean curvature
has attracted significant attention
due to its importance in modeling of crystal growth.
The majority of articles follow one of the three
main approaches:
polygonal, variational and viscosity.

The \emph{polygonal approach} relies on the relatively simple
expression of the anisotropic crystalline curvature $\kappa_\phi$ for
curves in a two-dimensional plane.
In fact, the quantity $\kappa_\phi$
is constant on the flat line segments
that are parallel to the facets of
the Wulff shape $\Wulff_\phi$, and is inversely proportional to
the length of the line segment.
Therefore when the Wulff shape $\Wulff_\phi$
is a convex polygon, i.e.,
the energy density $\phi^\circ$ is ``crystalline'', it is possible to define
the evolution of polygonal
curves with sides parallel to the facets of $\Wulff_\phi$.
This special family of solutions, often referred to as
a crystalline flow or a crystalline motion, was introduced in
\cite{AG89,Taylor91}.
The validity of this approach is limited in higher dimensions
\cite{GGM}
because the quantity $\kappa_\phi$ might not be constant
or even continuous on the facets and facet-breaking and facet-bending
phenomena might occur \cite{BNP99,BNP01IFB}.
For a further development see also \cite{Ishiwata08}.

The \emph{variational approach} applies only to problems with a divergence structure.
One then understands $\kappa_\phi$
as a subdifferential of the corresponding singular interfacial energy.
It was shown in \cite{FG,EGS} that in such case
the crystalline motion can be interpreted
as the evolution given by the abstract theory
of monotone operators \cite{Br,Ko}.
In this approach, the crystalline motion can be approximated by
an evolution by smooth energies
and vice-versa,
or by a crystalline algorithm \cite{GirK,Gir}.

As we explained above, the curvature $\kappa_\phi$
might not be constant or even continuous on the facets
of bodies in dimension higher than two,
and facet breaking or bending might occur \cite{BNP99,BNP01IFB}.
In fact, $\kappa_\phi$ is in general only bounded and of bounded variation
on the facets \cite{BNP01a,BNP01b}
and a nontrivial obstacle problem has to be solved to calculate $\kappa_\phi$
\cite{BNP01IFB,GPR}.
The facets with constant curvature $\kappa_\phi$
are called \emph{calibrable} \cite{BNP01IFB}.
The convex calibrable sets were first characterized in two dimensions
by E. Giusti \cite{Giusti78} in the isotropic case $W(p) = \abs{p}$.
That result was extended recently to higher dimensions in \cite{ACC05},
and to anisotropic norms
in \cite{CCMN08}.
The concept of calibrable sets is related to
the so-called Cheeger sets \cite{KL06,CCN07,AC09}.

This suggests that the crystalline flow cannot be restricted
in dimensions higher than two
to bodies with facets parallel to the facets
of the Wulff shape $\Wulff_\phi$
and a more general class of solutions is necessary.
A notion of generalized solutions and a comparison principle
was established through an approximation by
reaction-diffusion equations in \cite{BN00,BGN00}
for $V_\nu = \phi \kappa_\phi$.
However, the existence is known only for convex compact initial data
\cite{BCCN06}.
Even in two dimensions, if there is a nonuniform driving force $c$
the abstract theory suggests that $\kappa_\phi + c$ might not be
constant on the facets \cite{GG98DS}.
This situation is important because $c$ is often non-constant
in the models of crystal growth.
However, if one allows to include bent polygons
with free boundaries corresponding to the endpoints of a facet,
it is possible to give a rather explicit solution
\cite{GR08,GR09,GGoR11}.
In the graph case in one-dimension,
there is also an approach that defines solutions via 
an original definition of composition of multivalued operators
that allows the study of the evolution of facets and the regularity of
solutions for a general class of initial data under
a non-uniform driving force $c$ \cite{MR08,KMR}.

\parahead{Viscosity solutions}
The third approach based on the theory of viscosity solutions  is the approach taken in this paper.
The merit is that one can prove existence and
uniqueness in a general class of continuous functions
without requiring a divergence structure of the problem,
only relying on the comparison principle.
The review paper \cite{GG04} compares
the viscosity and variational approaches
for equation of divergence form.

Since the operator in \eqref{tvf} has a parabolic structure,
it can be expected that any reasonable class of solutions of the problem
satisfies a comparison principle.
In particular, \eqref{tvf} should fall in the scope of
the theory of viscosity solutions.
Unfortunately, the conventional theory of degenerate parabolic
equations does not apply to \eqref{tvf}
because of the strong singularity of the operator
$\divo \partial W(\nabla \psi)$ on the facets of $\psi$,
that is, whenever $\nabla \psi = 0$.
Suppose that $\psi \in C^2(U)$
in an open set $U \subset \Rn$
and $\nabla \psi \neq 0$ in $U$.
Then $\partial W(\nabla \psi(x))$ is a singleton for $x \in U$
and $\divo \partial W(\nabla \psi)$ can be expressed as
\begin{align*}
\bra{\divo \partial W(\nabla\psi)}(x)=
k\pth{\nabla\psi(x), \nabla^2 \psi(x)},
\end{align*}
where
\begin{align*}
k(p,X):= \trace \bra{\nabla^2 W(p) X}\qquad p \in \Rn \setminus\set0, X \in \mathcal S^n.
\end{align*}
Here $\mathcal S^n$ is the set of symmetric $n\times n$-matrices.
Since $W$ is positively one-homogeneous,
$\nabla^2 \bra{W(ap)} = a^{-1} \nabla^2 W(p)$ for $a > 0$ and $p \in \Rn \setminus\set0$
and therefore
\begin{align}
\label{nonlocal-curv}
k(p,X) = \ov{\abs{p}} \trace \bra{\nabla^2 W\pth{\frac{p}{\abs{p}}} X}.
\end{align}
We observe that $k(p,X)$ is unbounded as $p \to 0$,
and, in fact, at $p = 0$ the diffusion is so strong that the operator
$\divo \partial W(\nabla \psi)$
becomes a nonlocal operator that depends on the shape
and size of the facet of $\psi$.
For this reason an equation with such operator
is often called a very singular diffusion equation \cite{FG,GG10}.
If the singularity of the operator $k(p,X)$
is relatively weak at $p = 0$ so that the operator is still local,
as in the case of the $q$-Laplace equation
$u_t - \divo (\abs{\nabla u}^{q-2} \nabla u) = 0$
for $1 < q < 2$,
which corresponds to $W(p) = \abs{p}^q/q$ in our notation,
the theory of viscosity solutions can be extended
\cite{Goto94,IS,OhnumaSato97,Giga06}.
Note that the level set formulation of the motion
by the mean curvature can also be written in the form of \eqref{tvf}
with $W(p) = \abs{p}$
and $F(p, \xi) = - \abs{p} \xi$.
However, the singularity of $k(p,X)$ in
\eqref{nonlocal-curv} is canceled out by $\abs{p}$ in $F(p,\xi)$
and the operator is bounded as $p \to 0$ \cite{CGG,ES}.
There has been a considerable effort to extend the theory
of viscosity solutions to the problem \eqref{tvf}
with a positively one-homogeneous $W$ and a general continuous $F$
satisfying only the monotonicity assumption \eqref{ellipticity}.
Until recently, however, the results have been restricted to
the one-dimensional case \cite{GG98ARMA,GG01ARMA,GGR11,GGNakayasu}
or to related level set equations for evolving planar curves
\cite{GG00Gakuto,GG01ARMA};
see also the review paper \cite{G04}.

\medskip
In the recent paper \cite{GGP13}, we extended the theory
of viscosity solutions to problem \eqref{tvf} with $W(p) = \abs{p}$.
In the present paper, we shall generalize this result
to an arbitrary $W$ that satisfies the assumptions above.

\parahead{Main results}
We introduce a notion of viscosity solutions
for problem \eqref{tvf}
and prove the following well-posedness result,
which is an extension of the main result in \cite{GGP13}.

\begin{theorem}[Main theorem]
Suppose that a continuous function $F : \Rn \times \R \to \R$
is degenerate elliptic in the sense of \eqref{ellipticity},
and that $W : \Rn \to \R$
satisfies \eqref{W-regularity} and \eqref{W-bound}.
Then the initial value problem \eqref{tvf}--\eqref{tvf-ic}
with $u_0 \in C(\Tn)$
has a unique global viscosity solution
$u \in C(\T^n \times [0,\infty))$.
If additionally $u_0 \in \Lip(\T^n)$,
i.e., $u_0$ is a periodic Lipschitz function,
then $u(\cdot, t) \in \Lip(\Tn)$ for all $t \geq 0$ and
\begin{align*}
    \norm{\nabla u(\cdot, t)}_\infty &\leq \norm{\nabla u_0}_\infty
    && \text{for $t \geq 0$.}
\end{align*}
\end{theorem}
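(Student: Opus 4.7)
\medskip
\noindent\textbf{Proof plan.} The well-posedness will be established in three main stages: defining a suitable notion of viscosity solution, proving a comparison principle, and then using Perron's method for existence.

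The first step is to formulate the notion of viscosity sub- and supersolution. Because the operator $\divo \partial W(\nabla u)$ is nonlocal on the facets of $u$ (i.e., on the level sets where $\nabla u = 0$ or, more generally, where $\nabla u$ lies in a direction whose dual is a facet of $\wulff$), one cannot use pointwise $C^2$ test functions. Instead, I would follow the stratified faceted test function approach of \cite{GGP13}: admissible test functions $\vp$ are required to be of the form $\vp(x,t) = f(t) + g(x)$ in a neighborhood of the contact point, where $g$ is a smooth perturbation of a function whose facet at the contact point aligns with that of $u$, and the candidate value of $-\partial^0 E$ on that facet is computed via a solution of the appropriate obstacle/Cheeger-type problem. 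Section~\ref{sec:nonlocal-curvature} (promised in the excerpt) will supply the precise relationship between $\divo \partial W(\nabla u)$ and $-\partial^0 E(u)$, and the explicit formula for the curvature on facets in terms of the anisotropic Cheeger-type problem associated with $\wulff$.

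The second and hardest step is the comparison principle. The plan is to run a doubling-of-variables argument for a subsolution $u$ and supersolution $v$ on $Q$, penalizing $u(x,t) - v(y,s)$ by a standard parabolic factor plus a term of the form $\e^{-1} W^\circ(x-y)^2 + \e^{-1}(t-s)^2$, using $W^\circ$ rather than the Euclidean norm so that the test functions automatically respect the geometry of the anisotropy. When the maximum is attained at a point where the $x$-$y$ gradient is nonzero, the argument reduces to the standard degenerate parabolic theory via \eqref{nonlocal-curv}. The main obstacle is the facet case where the test function has a vanishing gradient or, more generally, a gradient inside the singular directions of $W$. Here the nonlocal nature of $\kappa_\phi$ forces one to construct faceted test functions separating $u$ from $v$ and to compare the two candidate curvatures through the obstacle problem defining $n_\phi^{\min}$; monotonicity of the obstacle problem with respect to set inclusion, together with the strict convexity of $\wulff$ from \eqref{W-regularity}, should yield the correct sign. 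A periodic approximation by strictly convex smoothings $W_\de$ (for which the standard viscosity theory applies) and the stability under such approximation, outlined in the next section of the paper, provides both a conceptual guide and an alternative route to the comparison principle via passage to the limit $\de \to 0$.

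Once comparison is in hand, existence proceeds by the Perron method applied to the set of subsolutions bounded between explicit barriers. Constant functions are obvious sub- and supersolutions, so for $u_0 \in C(\Tn)$ one obtains super/subsolutions of the form $\min u_0 - Ct$ and $\max u_0 + Ct$; more refined barriers controlling the modulus of continuity at $t=0$ are produced by sup/inf-convolution of $u_0$ with an anisotropic cone $C W^\circ(\cdot)$ and checking that these are sub/supersolutions for $C$ large enough, a computation that uses the $C^2$ regularity of $W$ away from the origin. The Lipschitz estimate follows from comparison applied to the translates $u(\cdot + h, \cdot)$ and $u$: since the equation is translation invariant in $x$, the comparison principle gives $\|u(\cdot,t) - u(\cdot+h,t)\|_\infty \leq \|u_0 - u_0(\cdot+h)\|_\infty \leq \|\nabla u_0\|_\infty |h|$, so passing $h \to 0$ preserves the Lipschitz bound. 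Finally, the case of merely continuous $u_0$ is handled by approximating $u_0$ uniformly by smooth $u_0^\e$ and invoking the comparison principle to pass to the limit, obtaining a global continuous solution.
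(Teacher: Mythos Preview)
Your overall architecture---viscosity solutions with faceted tests, comparison by doubling of variables, existence from barriers---is in the right spirit, but there is one genuine gap and one methodological difference from the paper that you should be aware of.

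The gap is your use of Perron's method for existence. In this setting the faceted test in the definition of viscosity solutions is nonlocal: whether a faceted test function $\vp=\psi+g$ is admissible at a point depends on the global shape of the facet of $\psi$ and on the value of $-\partial^0 E(\psi)$ there, and moreover the test function is required to be ``in general position'' (it must sit above $u$ after an arbitrary small shift). The standard Perron bump---replacing the candidate supremum locally by a strict subsolution near a point where the supersolution test fails---relies on being able to manufacture a \emph{local} perturbation that is still a subsolution. With faceted tests this step is far from routine: a local modification changes the facet and hence the value of $-\partial^0 E$, and it is not clear how to produce the required faceted test function in general position for the perturbed function. No Perron argument for this class of equations is available in the literature, and the paper avoids the issue entirely: existence is obtained by \emph{stability under regularization}. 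One approximates $W$ by strictly convex $C^2$ functions $W_m$, solves the resulting uniformly parabolic problems \eqref{approximate-problem} by standard theory, and shows that the half-relaxed limits of $u_m$ are sub/supersolutions of \eqref{tvf} (Theorem~\ref{th:stability}). The comparison principle then forces $\overline u=\underline u$. This also explains the barrier construction: the barriers are $m$-dependent (built from the Legendre--Fenchel conjugates $W_{m;A,q}^\star$), and they are only required to be classical super/subsolutions of the \emph{approximating} problems, never of the singular limit. Your proposed barriers $C\,W^\circ(\cdot)$ would have to be checked directly against the faceted test of the limit problem, which is again nontrivial at the cone tip.

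On the comparison principle, your plan is close but the penalty you propose is not the one the paper uses. The paper penalizes with $\tfrac{|x-y-\zeta|^2}{2\e}$ and varies the shift parameter $\zeta$ over a small ball; the Constancy Lemma (Lemma~\ref{le:constancy}) then shows that if every maximum for every small $\zeta$ occurs at $x-y-\zeta=0$, the function $\ell(\zeta)$ is constant, which yields the flatness needed to build two \emph{strictly ordered} smooth pairs and hence ordered faceted test functions for $u$ and $v$. Your idea of using $W^\circ(x-y)^2$ does not by itself produce this dichotomy; the $\zeta$-shift device is the mechanism that separates the classical case from the faceted case. The monotonicity you invoke for the obstacle problem is in the paper realized as a comparison principle for the resolvent $(I+a\partial E)^{-1}$ (Proposition~\ref{pr:monotonicity}), which gives $-\partial^0 E(\psi_u)\le -\partial^0 E(\psi_v)$ on the intersection of the facets. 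Your Lipschitz argument via translation invariance and comparison is exactly what the paper means by ``a standard argument.''
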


As in \cite{GGP13},
the uniqueness of solutions
will be established via a comparison principle,
and the existence of solutions is verified
by showing the stability of solutions under approximation
by regularized problems.
As a corollary, we see that
in the case of the standard anisotropic total variation flow
equation our viscosity solutions
coincide with the semigroup (weak) solutions
given by the theory of monotone operators.

Viscosity solutions are defined as those functions that admit
a comparison principle with a class of test functions,
which are sufficiently regular functions to which the
operator in \eqref{tvf} can be applied directly.
The difficult task is the crafting of an appropriate
class of such test functions that is on one hand large enough
so that the viscosity solutions can be shown to be unique,
by the means of proving a comparison principle,
and on the other hand small enough so that the proof of existence
is possible for
any given sufficiently regular initial data.

As the computation above suggests,
we can evaluate the operator $\divo \partial W (\nabla \psi)$
at a point $x_0$
whenever $\psi \in C^2(U_{x_0})$ and $\nabla \psi(x_0) \neq 0$.
Thus arbitrary sufficiently smooth functions $\vp(x,t)$
with $\nabla \vp \neq 0$
serve as test functions.

However, the situation is much more delicate
at places where the gradient of the solution vanishes,
that is, on the facets.
The main difficulty stems from the restriction that
the operator $\divo \partial W(\nabla \psi) = - \partial^0 E(\psi)$
is only defined
for functions $\psi \in \dom(\partial E)$.
Fortunately,
a simple class of what we call \emph{faceted functions}
is available and we are able to show that such functions belong
to $\dom(\partial E)$ under some regularity assumptions
on the shape of the facet.
The main tool is the characterization of the subdifferential $\partial E(\psi)$ of a Lipschitz function $\psi$ (Corollary~\ref{co:char-lip}).
Namely, a function belongs to $\partial E(\psi)$
if it is the distributive divergence
of a vector field that pointwise almost everywhere belongs
to the sets $\partial W(\nabla \psi(x))$.
To construct a Lipschitz faceted function,
we start from a pair of sets
that satisfy certain regularity conditions
and characterize the facet.
This characterization follows from the simple observation that any facet 
of a continuous function
$\psi$
can be uniquely described by a pair of disjoint open sets
$\set{\psi > a}$ and $\set{\psi < a}$ for some $a \in \R$.
The quantity $- \partial^0 E$ is well-defined for such faceted functions,
and, moreover,
if two pairs are ordered in a specific sense,
the values of $-\partial^0 E$ are also ordered on the intersection
of the facets.

In contrast to \cite{GGP13},
we do not introduce the quantity $\Lambda$,
which we refer to as the \emph{nonlocal curvature}
of a facet there. This makes the current exposition more straightforward.

The definition of viscosity solutions (Definition~\ref{def:visc-sol})
then contains the classical test with smooth test functions when the gradient
of the solution is nonzero,
and a new faceted test with a class of faceted test functions.
In the faceted test we only evaluate the essential infima and suprema of
$-\partial^0 E$ over balls of small radius and thus obtain a pointwise
quantity.
Furthermore, to facilitate the proof of stability and existence,
we require that the faceted test function can be shifted in an arbitrary
direction by a small amount,
that is, we say that the faceted test function is in \emph{general position}.

The proof of the comparison principle (Theorem~\ref{th:comparison})
follows the standard doubling-of-variables argument
with an important twist.
Suppose that $u$ and $v$ are viscosity solutions of \eqref{tvf}
such that $u(\cdot, 0) \leq v(\cdot, 0)$.
We introduce an extra parameter $\zeta \in \Tn$
and investigate the $\zeta$-dependence of the maxima of the functions
\begin{align*}
\Phi_\zeta(x,t,y,s;\e) := u(x,t) - v(y,s) - \frac{\abs{x - y - \zeta}^2}{2\e}
-S(t,s; \e)
\end{align*}
over $(x,t,y,s) \in \Tn \times [0,T] \times \Tn \times [0,T]$
and a fixed parameter $\e > 0$. The time penalization $S(t,s;\e)$
is defined in Section~\ref{sec:comparison-principle}.
This device was developed in \cite{GG98ARMA}, but its history goes back to
\cite{CGG,Goto94}.
In particular, by varying $\zeta$, we increase the change that
some maximum will occur at a point $(x,t,y,s)$ such that $x - y - \zeta \neq 0$ and the standard construction of a test function for
the classical test with nonzero gradient is available \cite{CIL,Giga06}.
If all maxima of $\Phi_\zeta$ for 
all small $\zeta$ happen to lie at points $(x,t,y,s)$ such that
$x - y - \zeta = 0$,
we get extra information about the shape of $u$ and $v$
at their contact point.
To be more specific, $u$ and $v$ must have some flatness
and therefore there is enough room for finding two ordered smooth pairs that
can be used to construct ordered faceted test functions
for both $u$ and $v$.

The existence of solutions (Theorem~\ref{th:existence})
follows from the stability
under approximation by regularized degenerate parabolic problems
(Theorem~\ref{th:stability})
for which the standard theory of viscosity solutions applies
\cite{CIL}.
We regularize \eqref{tvf}
through an approximation of $W$ by a decreasing sequence of strongly convex
smooth functions $W_m$, $m \geq 1$, with a quadratic growth at infinity,
so that the subdifferential $-\partial^0 E_m$ of the corresponding energy $E_m(\psi) := \int W_m(\nabla u)$
is a uniformly elliptic quasi-linear differential operator.

Since we approximate a nonlocal problem by local problems,
the main difficulty materializes
while passing through the limit in the definition of viscosity solutions.
More precisely,
when we apply the regularized operator to a (smooth) faceted test function,
we recover only local information that is independent
of the overall shape of the facet,
while in the limit the shape of the facet is very important.

To recover the nonlocal information,
we perturb the test function $\vp(x,t) = \psi(x) + g(t)$
by one step of the implicit Euler
approximation of the anisotropic total variation flow
with time-step $a>0$,
that is, by finding the solution $\psi_a$ of the resolvent problem
\begin{align*}
\psi_a = (I + a \partial E)^{-1} \psi.
\end{align*}
By solving the resolvent problem for the regularized energy $E_m$,
\begin{align*}
\psi_{a,m} = (I + a \partial E_m)^{-1} \psi,
\end{align*}
we obtain a smooth perturbed test function
$\vp_{a,m}(x,t) = \psi_{a,m}(x) + g(t)$
for the regularized problem that contains the
missing nonlocal information.
This type of approximation has two advantages.
Firstly,  $\psi_a$ is uniformly approximated by
$\psi_{a,m}$ as $m\to\infty$ for a fixed $a$
and so is $\psi$ by $\psi_a$ as $a\to0$.
Secondly,
if $\psi \in \dom(\partial E)$
then the function $-\partial^0 E(\psi)$
is approximated in $L^2(\Tn)$
as $a \to 0+$
by the ratio $(\psi_a-\psi)/a$.
This is the main ingredient in the proof of stability.

To finish the proof of existence, we have to show that the limit
of solutions of the regularized problem has the correct initial data.
This is done by a comparison with barriers at $t = 0$.
However, it is necessary to construct barriers depending on $m$.
As in \cite{GG98ARMA} and \cite{GGP13}, we use the
convex conjugates of $W_m$, but with a more robust
cutoff of large gradients that requieres neither one-dimensionality
nor radial symmetry of $W_m$.

\parahead{Outline}
This paper consists of the following parts.
First, in Section~\ref{sec:nonlocal-curvature},
we discuss the interpretation
of the term $\divo \partial W(\nabla \psi) \sim -\partial^0 E(\psi)$
for a class of functions $\psi$ that have flat parts,
the so-called facets.
This will be then used in Section~\ref{sec:viscosity-solutions}
to introduce viscosity solutions of problem \eqref{tvf}
and a suitable class of test functions.
Once the solutions are defined,
we establish a comparison principle in Section~\ref{sec:comparison-principle}.
The paper is concluded in Section~\ref{sec:existence-stability}
with a brief discussion of
stability of \eqref{tvf} under approximation
by regularized problems,
which provides, as a corollary, the existence of solutions.

\section{Nonlocal curvature}
\label{sec:nonlocal-curvature}

The main challenge for developing
a reasonable theory of viscosity solutions
is the selection of an appropriate class of test functions.
In particular,
a special care has to be taken
when the gradient of a solution vanishes.
In such a case,
the solution should have a facet,
i.e., it should be constant on
a closed neighborhood of the point.
Functions that have such facets will be called faceted functions.

In this section we will investigate the value
of the term $\divo \partial W(\nabla \psi)
\sim -\partial^0 E(\psi)$
on facets of faceted functions.
It turns out that such facets can be described
by a pair of disjoint open sets, which
characterize the convexity and concavity
of the functions at the facet boundary.
The understanding of the term $-\partial^0 E(\psi)$
is further complicated by the fact that
it is a nonlocal quantity on facets.
Motivated by the motion by crystalline mean curvature,
we shall refer to this term as the \emph{nonlocal curvature},
in particular if this term is evaluated
on a facet.
Instead of evaluating it directly,
we approximate it via a resolvent problem
for the energy $E$.
This both yields a comparison principle for
$-\partial^0 E(\psi)$
and a way how the approximate it
via regularized energies
in the proof of existence.

In contrast to \cite{GGP13},
we do not define the quantity $\Lambda$
which we called nonlocal curvature there
and showed that it is independent of the choice of support function
of a given pair.
The proof of this fact is quite technical,
but it is extendable to the current context.
However, this quantity is not necessary for definition
of viscosity solutions
and we choose a more direct approach here.

\subsection{Torus}

We consider the total variation energy for periodic functions
on $\Rn$.
These functions can be identified with functions
on the $n$-dimensional torus $\Tn := \Rn / \Z^n$.
The set $\Tn$ is the set of all equivalency classes
$\set{x + \Z^n : x \in \Rn}$
with the induced metric and topology, namely
\begin{align}
\label{torus-metric}
\dist(x,y) := \dist_\Rn(x + \Z^n, y + \Z^n),\quad
\abs{x} := \dist(x, 0) = \inf_{k \in \Z^n} \abs{x + k}_\Rn,
\end{align}
for $x,y \in \Tn$.
Consequently, the open ball $B_r(x)$
centered at $x \in \Tn$ of radius $r > 0$
is defined as
$B_r(x) := \set{y \in \Tn: \abs{x-y} < r}$.
Note that $B_r(x)$ has a smooth boundary if $r < 1/2$.

\subsection{Subdifferential of the total variation energy}
\label{sec:subdifferential}

Function $u$ is called a function of bounded variation
and said to belong to $BV(\Tn)$ if $u \in L^1(\Tn)$
and its gradient $Du$ in the sense of distributions
is a vector valued Radon measure
with finite total variation on $\Tn$.

To characterize the subdifferential of $E$,
we need a pairing between functions of bounded
variations
and vector fields with $L^2$ divergence
that was studied in \cite{Anzellotti} (see also \cite{FM})
for bounded domains in $\Rn$.
The modification for $\Tn$ is straightforward.
We recall the definition of the space of vector fields
\begin{align*}
X_2(\Tn) := \set{z \in L^\infty(\Tn; \Rn): \divo z \in L^2(\Tn)}.
\end{align*}
It was also shown in \cite{Anzellotti} that
for any $z \in X_2(\Tn)$ and $u \in BV(\Tn) \cap L^2(\Tn)$
we can define a Radon measure $(z, Du)$ on $\Tn$ as
\begin{align*}
\ang{(z,Du), \vp} :=
-\int_\Tn u \vp \divo z - \int_\Tn u z \cdot \nabla \vp
\qquad \vp \in C^\infty(\Tn).
\end{align*}

The following characterization of the subdifferential
of energy $E$ was proved in \cite{Moll05}
on subsets of $\Rn$,
but a modification for $\Tn$ is straightforward.

\begin{proposition}
Let $u, v \in L^2(\Tn)$.
Then $v \in \partial E(u)$
if and only if
$u \in BV(\Tn)$
and there exists a vector field $z \in X_2(\Tn)$
such that $z(x) \in \partial W(\nabla u(x))$ a.e.,
$(z, Du) = W(Du)$ as measures in $\Tn$
and $v = - \divo z$.
\end{proposition}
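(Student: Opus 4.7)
The plan is to prove the two implications separately, following the strategy of \cite{Moll05} and using Anzellotti's pairing $(z, Du)$ between bounded vector fields with $L^2$-divergence and BV functions, which the paper has already introduced.

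\textbf{Sufficiency.} Assume such a vector field $z$ exists. Note that $z(x) \in \partial W(\nabla u(x))$ a.e.\ forces $W^\circ(z(x)) \leq 1$ a.e., since subgradients of the one-homogeneous $W$ lie in $\partial W(0) = \set{q : W^\circ(q) \leq 1}$. Consequently $z \cdot p \leq W(p)$ pointwise, which upgrades to the measure inequality $(z, D\psi) \leq W(D\psi)$ on $\Tn$ for every $\psi \in BV(\Tn) \cap L^2(\Tn)$. For such $\psi$, the definition of the pairing gives
\begin{align*}
\int_\Tn v(\psi - u)\dx = -\int_\Tn (\psi - u) \divo z \dx = \int_\Tn d(z, D\psi) - \int_\Tn d(z, Du).
\end{align*}
Using $(z, Du) = W(Du)$ and the above measure inequality yields $\int v(\psi - u)\dx \leq E(\psi) - E(u)$. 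For $\psi \notin BV(\Tn)$ the inequality is trivial since $E(\psi) = +\infty$. Hence $v \in \partial E(u)$.

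\textbf{Necessity.} Conversely, assume $v \in \partial E(u)$, so in particular $u \in \dom(E) \subset BV(\Tn) \cap L^2(\Tn)$. The first step is a dual representation of $E$: since $W(p) = \sup\set{q \cdot p : W^\circ(q) \leq 1}$, a standard relaxation/smooth-approximation argument (identical to the one on bounded domains, the torus being boundaryless actually simplifies matters) yields
\begin{align*}
E(\psi) = \sup\set{-\int_\Tn \psi \divo z \dx : z \in X_2(\Tn),\ W^\circ(z) \leq 1 \text{ a.e.}}
\end{align*}
for all $\psi \in L^2(\Tn)$. The Legendre conjugate $E^*$ on $L^2(\Tn)$ is then the (lower-semicontinuous envelope of the) indicator of $\set{-\divo z : z \in X_2(\Tn),\ W^\circ(z) \leq 1}$. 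The extremality condition $E(u) + E^*(v) = \int v u \dx$ characterizing $v \in \partial E(u)$ therefore supplies a vector field $z \in X_2(\Tn)$ with $W^\circ(z) \leq 1$ a.e.\ and $v = -\divo z$. Rewriting the extremality condition via the pairing,
\begin{align*}
\int_\Tn d(z,Du) = -\int_\Tn u \divo z \dx = \int_\Tn v u \dx = E(u) = \int_\Tn W(Du);
\end{align*}
combined with the measure inequality $(z, Du) \leq W(Du)$, equality of total masses forces $(z, Du) = W(Du)$ as measures.

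\textbf{Pointwise subgradient condition.} It remains to upgrade $(z, Du) = W(Du)$ to $z(x) \in \partial W(\nabla u(x))$ a.e. Decomposing $Du$ into its Lebesgue-absolutely-continuous part $\nabla u \dx$ and singular part, and restricting the measure identity to the absolutely continuous part via Radon--Nikodym, we obtain $z(x) \cdot \nabla u(x) = W(\nabla u(x))$ for a.e.\ $x \in \Tn$. Combined with $z(x) \cdot p \leq W(p)$ for all $p \in \Rn$ (from $W^\circ(z(x)) \leq 1$), this is exactly the subgradient inequality $W(p) \geq W(\nabla u(x)) + z(x) \cdot (p - \nabla u(x))$, so $z(x) \in \partial W(\nabla u(x))$ a.e.

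The main technical obstacle is the dual representation of $E$ and the justification of the measure-valued pairing inequality $(z, D\psi) \leq W(D\psi)$ on $\Tn$; both are proved in \cite{Anzellotti, Moll05} on bounded domains in $\Rn$ and their transfer to the torus $\Tn$ is straightforward because $\Tn$ is compact and boundaryless, so boundary trace terms that complicate the Euclidean case disappear.
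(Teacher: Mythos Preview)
The paper does not supply its own proof of this proposition; it simply cites \cite{Moll05} and remarks that the modification for $\Tn$ is straightforward. Your proposal follows exactly the strategy of \cite{Moll05} (duality via the Anzellotti pairing, the Fenchel extremality relation $E(u)+E^*(v)=\int vu$, and the pointwise identification of the absolutely continuous part of $(z,Du)$ with $z\cdot\nabla u$), so it is fully in line with what the paper invokes.

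One point worth making explicit: to extract an actual $z\in X_2(\Tn)$ with $v=-\divo z$ from $E^*(v)=0$, you need the set $\set{-\divo z: z\in X_2(\Tn),\ W^\circ(z)\le 1\text{ a.e.}}$ to be \emph{closed} in $L^2(\Tn)$, not merely that $E^*$ is the indicator of its closure. This does hold---a sequence $z_k$ with $W^\circ(z_k)\le 1$ is bounded in $L^\infty$, so a subsequence converges weak-$*$ to some $z$, the constraint $W^\circ(z)\le 1$ passes to the limit by convexity, and $-\divo z_k\to -\divo z$ in distributions identifies the $L^2$ limit---but your parenthetical ``(lower-semicontinuous envelope of the) indicator'' leaves this step implicit, and without it the argument does not actually produce the vector field $z$.
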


\begin{remark}
Since $W$ is one-homogeneous,
we can define the measure $W(Du) := 
W(\nabla u) + W\pth{\frac{D^s u}{\abs{D^s u}}} \abs{D^s u}$
for any $u \in BV(\Tn)$,
where $\nabla u$ is the absolutely continuous
part of $Du$ with respect to the Lebesgue measure
and $D^s u$ is the singular part.
\end{remark}

However, for our purposes we only need the characterization
of the subdifferential for Lipschitz test functions,
in which case we get the following simpler corollary.

\begin{corollary}
\label{co:char-lip}
Let $u \in \Lip(\Tn)$ and $v \in L^2(\Tn)$.
Then $v \in \partial E(u)$
if and only if
there exists a vector field $z \in X_2(\Tn)$
such that $z(x) \in \partial W(\nabla u(x))$ a.e.
and $v = - \divo z$.
\end{corollary}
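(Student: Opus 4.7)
The plan is to derive the corollary directly from the preceding proposition, using the extra regularity afforded by $u \in \Lip(\Tn)$ to automatically upgrade the hypothesis $z(x) \in \partial W(\nabla u(x))$ a.e.\ to the pairing identity $(z, Du) = W(Du)$ required by the proposition.

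First, I would dispatch the ``only if'' direction, which is essentially a free consequence of the proposition: Lipschitz functions on $\Tn$ are bounded, hence lie in $L^2(\Tn) \cap BV(\Tn)$, so the proposition applies and yields the vector field $z$ with even more information than the corollary claims.

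For the ``if'' direction, given $z \in X_2(\Tn)$ with $z(x) \in \partial W(\nabla u(x))$ a.e.\ and $v = -\divo z$, the missing piece is the measure identity $(z, Du) = W(Du)$. Since $u \in \Lip(\Tn) \subset W^{1,\infty}(\Tn)$, the distributional gradient is absolutely continuous with density $\nabla u \in L^\infty(\Tn; \Rn)$, so $D^s u = 0$ and $W(Du) = W(\nabla u)\,dx$. Testing the Anzellotti pairing against $\vp \in C^\infty(\Tn)$, I can integrate by parts using that $u\vp \in W^{1,\infty}(\Tn)$ pairs admissibly against $z \in X_2(\Tn)$:
\begin{align*}
\ang{(z,Du), \vp}
= -\int_\Tn u\vp \divo z - \int_\Tn u z \cdot \nabla\vp
= \int_\Tn \vp\, z \cdot \nabla u,
\end{align*}
so $(z, Du) = (z\cdot \nabla u)\,dx$ as Radon measures on $\Tn$.

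The key observation is then the following pointwise identity: since $W$ is convex and positively one-homogeneous, one has $z \in \partial W(p) \iff z\cdot p = W(p)$ and $z\cdot q \leq W(q)$ for all $q\in\Rn$. Indeed, testing the subdifferential inequality $W(p+th) - W(p) \geq t\, z\cdot h$ with $h = p$ and letting $t \to 0^\pm$ using $W((1+t)p) = (1+t)W(p)$ for $1+t>0$ forces $z \cdot p = W(p)$. Applying this a.e.\ with $p = \nabla u(x)$ gives $z(x)\cdot \nabla u(x) = W(\nabla u(x))$ a.e., so
\begin{align*}
(z, Du) = (z\cdot \nabla u)\,dx = W(\nabla u)\,dx = W(Du).
\end{align*}
With this identity in hand, the proposition is applicable and yields $v \in \partial E(u)$. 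I do not anticipate a real obstacle here; the only subtlety is justifying the integration by parts step for $u \in W^{1,\infty}(\Tn)$ and $z \in X_2(\Tn)$, but this is part of the standard machinery developed in \cite{Anzellotti} and transferred to $\Tn$ by periodicity.
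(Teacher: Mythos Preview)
Your proposal is correct and follows exactly the route implied by the paper, which states the result as an immediate corollary of the preceding proposition without further argument. You have supplied the natural details: for Lipschitz $u$ the singular part $D^s u$ vanishes, the Anzellotti pairing reduces to $(z\cdot\nabla u)\,dx$, and one-homogeneity of $W$ forces $z\cdot\nabla u = W(\nabla u)$ pointwise whenever $z\in\partial W(\nabla u)$, so the measure condition $(z,Du)=W(Du)$ is automatic.
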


\begin{remark}
\label{re:subdiff-cone}
It is clear from Corollary~\ref{co:char-lip} that if $\psi \in \Lip(\Tn)$
and $v \in \partial E(\psi)$
then for any positive constants $\al, \be > 0$
we have
$v \in \partial E(\hat\psi)$
where
\begin{align*}
\hat \psi = \al [\psi]_+ - \be [\psi]_-
\end{align*}
where $[s]_\pm := \max (\pm s, 0)$;
see \cite[Remark~3.2]{CasellesChambolle06}.
In particular, $\partial E(\psi) = \partial E(\hat \psi)$.
This is a consequence of the one-homogeneity and convexity of $W$
which imply that $\partial W(p) = \partial W(a p)$
and $\partial W(p) \subset \partial W(0)$
for all $p \in \Rn$, $a > 0$.
\end{remark}

\subsection{General facets}

By $\mathcal P$ we shall denote
all ordered pairs of disjoint subsets of $\Tn$.
Additionally, $(\mathcal P,\preceq)$ will be a partially ordered set with
ordering
\begin{align*}
(A_-, A_+) \preceq (B_-, B_+)
\qquad \Leftrightarrow \qquad
A_+ \subset B_+ \text{ and } B_- \subset A_-
\end{align*}
for $(A_-, A_+), (B_-, B_+) \in \mathcal P$.
We will also denote the reversal by
\begin{align*}
-(A_-, A_+) := (A_+, A_-).
\end{align*}
By definition, if $(A_-, A_+) \preceq (B_-, B_+)$
then $-(B_-, B_+) \preceq -(A_-, A_+)$.

\begin{definition}
A pair $(A_-, A_+)\in \mathcal P$ is \emph{open}
if both sets $A_\pm$ are open.

We say that $\psi \in \Lip(\Tn)$ is a \emph{support function}
of an open pair $(A_-, A_+) \in \mathcal P$
if
\begin{align*}
\psi
\begin{cases}
> 0 & \text{in } A_+,\\
= 0 & \text{in } (A_- \cup A_+)^c,\\
< 0 & \text{in } A_-.
\end{cases}
\end{align*}
On the other hand,
for any function $\psi$ on $\Tn$
we define its pair (not necessarily open)
\begin{align*}
\pair(\psi) := (\set{x : \psi(x) < 0}, \set{x : \psi(x) > 0}).
\end{align*}
\end{definition}

\begin{remark}
\label{re:support-function-symmetry}
If $\psi$ is a support function of an open pair $(A_-, A_+) \in \mathcal P$
then $-\psi$ is a support function of the open pair
$-(A_-, A_+) := (A_+, A_-)$.
With this notation we have
\begin{align*}
\pair(\psi) = -\pair(-\psi)
\end{align*}
for any function $\psi$.
\end{remark}

\begin{example}
For any open pair $(A_-, A_+) \in \mathcal P$
the function
\begin{align*}
\psi(x) := \dist(x,A_+^c) - \dist(x, A_-^c)
\end{align*}
is a support function of $(A_-, A_+)$.
\end{example}

\begin{definition}
We say that an open pair $(A_-, A_+) \in \mathcal P$
is a smooth pair
if
\begin{enumerate}[(i)]
\item
$\dist(A_-, A_+) > 0$,
\item
$\partial A_- \in C^\infty$ and $\partial A_+ \in C^\infty$.
\end{enumerate}
Note that this definition allows for $A_-$ and/or $A_+$
to be empty as $\dist$ is $+\infty$ by definition
when one of the sets is empty. 
\end{definition}

\begin{definition}
We say that an open pair $(A_-, A_+) \in \mathcal P$
is an admissible pair
if
there exists a support function $\psi$ of $(A_-, A_+)$
such that $\psi \in \dom(\partial E)$.
\end{definition}

We shall show that every pair in $\mathcal P$
can be approximated in Hausdorff distance
by a smooth pair,
and in turn that every smooth pair is an admissible pair.

The main tool in the construction will be
the generalized $\rho$-neighborhood
of a set $A$, defined as
\begin{align*}
\nbd^\rho(A) :=
\begin{cases}
A + \cl B_\rho(0) & \rho > 0,\\
A & \rho = 0,\\
\set{x \in \Tn: \cl B_\rho(x) \subset A} & \rho < 0,
\end{cases}
\end{align*}
where $G+H := \set{x+y: x \in G,\ y \in H}$
denotes the Minkowski sum of sets and
$\cl B_\rho(x)$ is the closed ball of radius $\rho$ centered
at $x$.
In image analysis it is often written as
$\nbd^\rho(A) = A \oplus \cl B_\rho(0)$ for $\rho > 0$
and $\nbd^{\rho}(A) = A \ominus \cl B_{\abs\rho}(0)$ for $\rho < 0$,
where $\oplus$ denotes the Minkowski addition and $\ominus$
denotes the Minkowski decomposition.
In morphology, $\oplus$ is called dilation
and $\ominus$ is called erosion.
We collect the basic properties of $\nbd^\rho$ in the following
proposition; its proof is quite straightforward.

\begin{proposition}
\label{pr:nbd-properties}
\begin{enumerate}
\item $\mathcal U^{-\rho}(A) \subset A \subset \mathcal U^\rho(A)$
for $\rho > 0$.
\item (complement)
\begin{align}
\label{compl-nbd}
\pth{\nbd^\rho(A)}^c = \nbd^{-\rho}(A^c)
\qquad \text{for any set $A \subset \Tn$ and $\rho \in \R$}
\end{align}
\item (monotonicity)
\begin{align*}
\nbd^\rho(A_1) \subset \nbd^\rho(A_2)\qquad
\text{for $A_1 \subset A_2 \subset \Tn$ and $\rho \in \R$.}
\end{align*}
\item
$\nbd^\rho(A_1 \cap A_2) \subset \nbd^\rho(A_1) \cap \nbd^\rho(A_2)$
 for all $\rho \in \R$, with equality for $\rho \leq 0$.
\item
$\nbd^r(\nbd^\rho(A)) \subset \nbd^{r+\rho}(A)$
for $r \geq 0$ and $\rho \in \R$; equality holds if $\rho \geq 0$.
\item
For any $\rho \in \R$, we have
$\nbd^\rho(A_1) \subset A_2$ if and only if $A_1 \subset \nbd^{-\rho} (A_2)$.
\end{enumerate}
\end{proposition}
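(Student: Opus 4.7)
The plan is to establish each of the six assertions directly from the definition of $\nbd^\rho$, splitting as needed into the regimes $\rho>0$ (Minkowski dilation), $\rho=0$ (identity) and $\rho<0$ (Minkowski erosion). The structural backbone of the whole argument is the complement identity (2), from which the erosion case of every other item can be traded for the dilation case on complements.

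Items (1)--(3) are essentially one-line checks. For (1), $A \subset \nbd^\rho(A)$ holds because $0 \in \cl B_\rho(0)$, while $\nbd^{-\rho}(A) \subset A$ holds because $x \in \cl B_\rho(x)$. For (2) with $\rho > 0$, the observation is that $y \notin A + \cl B_\rho(0)$ is equivalent to $\cl B_\rho(y) \cap A = \emptyset$, i.e., $\cl B_\rho(y) \subset A^c$, which is the definition of $\nbd^{-\rho}(A^c)$; the $\rho < 0$ case follows by applying this to $A^c$ and taking complements once more. Monotonicity (3) is immediate from the definition in each regime.

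For (4), the inclusion is inherited from (3) applied to $A_1 \cap A_2 \subset A_i$, and the equality when $\rho \le 0$ holds because both sides unwind to $\set{x : \cl B_{|\rho|}(x) \subset A_1 \cap A_2}$. In (5), the equality for $\rho \ge 0, r \ge 0$ reduces to associativity of Minkowski addition combined with $\cl B_r(0) + \cl B_\rho(0) = \cl B_{r+\rho}(0)$. The more delicate inclusion when $\rho < 0$ and $r \ge 0$ I would handle by a case split on the sign of $r+\rho$: writing $y = x + z$ with $|z| \le r$ and $\cl B_{|\rho|}(x) \subset A$, if $r+\rho \ge 0$ the point $a = x + \min(|z|,|\rho|)\,z/|z|$ lies in $A$ and satisfies $|y-a| \le r+\rho$, while if $r+\rho < 0$ the triangle inequality yields $\cl B_{|r+\rho|}(y) \subset \cl B_{|\rho|}(x) \subset A$ directly.

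Finally, (6) for $\rho > 0$ is the standard Minkowski adjunction: $\nbd^\rho(A_1) \subset A_2$ unwinds as ``$a + w \in A_2$ for every $a \in A_1$ and every $|w| \le \rho$'', which is exactly $\cl B_\rho(a) \subset A_2$ for all $a \in A_1$, i.e., $A_1 \subset \nbd^{-\rho}(A_2)$. The $\rho = 0$ case is trivial, and I would reduce the $\rho < 0$ case to the already proved $\rho > 0$ case by applying the complement identity (2) to both sides of each inclusion. The main obstacle is the sign and Minkowski bookkeeping in item (5); the remaining items amount to carefully unpacking the definitions.
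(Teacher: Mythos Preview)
The paper does not actually give a proof of this proposition; it only remarks that the verification is ``quite straightforward'' and omits it. Your outline for items (1)--(5), and for item (6) when $\rho \ge 0$, is correct and is precisely the sort of routine unpacking one expects here; in particular the case split in (5) with the explicit intermediate point $a = x + \min(\abs{z},\abs{\rho})\,z/\abs{z}$ (and $a=x$ when $z=0$) is exactly right.

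The genuine gap is in item (6) for $\rho < 0$. Your reduction via the complement identity is circular: writing $\sigma = -\rho > 0$ and applying (2) to both inclusions, the desired equivalence between $\nbd^{-\sigma}(A_1) \subset A_2$ and $A_1 \subset \nbd^{\sigma}(A_2)$ becomes the equivalence between $A_2^c \subset \nbd^{\sigma}(A_1^c)$ and $\nbd^{-\sigma}(A_2^c) \subset A_1^c$, which is again an instance of (6) with a \emph{negative} exponent, not of the already-established $\rho>0$ adjunction $\nbd^\sigma(C_1)\subset C_2 \Leftrightarrow C_1\subset\nbd^{-\sigma}(C_2)$. In fact the equivalence is simply false for $\rho < 0$: with $A_1 = \{0\}$ and $A_2 = \emptyset$ one has $\nbd^{-\sigma}(\{0\}) = \emptyset \subset \emptyset$ while $\{0\} \not\subset \nbd^{\sigma}(\emptyset) = \emptyset$; in the other direction, with $A_1 = \Tn$ and $A_2 = \Tn \setminus \{0\}$ one has $\Tn \subset \nbd^{\sigma}(\Tn \setminus \{0\}) = \Tn$ while $\nbd^{-\sigma}(\Tn) = \Tn \not\subset \Tn \setminus \{0\}$. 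So assertion (6) as written holds only for $\rho \ge 0$; the paper's ``for any $\rho \in \R$'' is an overstatement, and no reduction of the kind you propose can succeed.
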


For a set $A \subset \Tn$ we introduce the signed distance function
\begin{align*}
d_A(x) := \dist(x, A) - \dist(x, A^c).
\end{align*}
We observe that
\begin{align*}
\interior \nbd^\rho(A) &= \set{x \in \Tn : d_A(x) < \rho},\\
\nbd^\rho(\cl A) &= \set{x \in \Tn: d_A(x) \leq \rho}
\end{align*}
for all $\rho \in \R$.

For pair $(A_-, A_+) \in \mathcal P$ we define the $\rho$-neighborhood as
\begin{align*}
\mathcal U^\rho(A_-, A_+) :=
(\mathcal U^{-\rho}(A_-), \mathcal U^{\rho}(A_+)).
\end{align*}
Clearly
\begin{align*}
\mathcal U^{-\rho}(A_-, A_+)
\preceq (A_-, A_+) \preceq \mathcal U^\rho(A_-, A_+) \qquad \rho \geq 0.
\end{align*}

The following lemma was proved in \cite{GGP13}.

\begin{lemma}
\label{le:smooth-nbd}
For any set $A \subset \Rn$
and constants $\rho_1, \rho_2$, $0 < \rho_1 < \rho_2$,
there exist open sets $G_-, G_+ \subset \Rn$
with smooth boundaries
such that
\begin{align*}
\mathcal U^{-\rho_2}(A) \subset G_- \subset \mathcal U^{-\rho_1}(A)
\subset
A \subset \mathcal U^{\rho_1}(A) \subset G_+ \subset \mathcal U^{\rho_2}(A).
\end{align*}
\end{lemma}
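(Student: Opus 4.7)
The plan is to apply Sard's theorem to a mollification of the signed distance function $d_A$. I may assume that both $A$ and $A^c$ are nonempty, as otherwise every set in the chain of inclusions equals $\emptyset$ or $\Rn$ and we can take $G_\pm$ to be either of these.

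First I would fix $\delta := (\rho_2 - \rho_1)/4$ and let $\eta$ be a smooth nonnegative mollifier on $\Rn$ with $\supp \eta \subset B_\delta(0)$ and $\int \eta = 1$. Since $d_A$ is $1$-Lipschitz, the convolution $f := d_A * \eta$ is a smooth function on $\Rn$ satisfying
\[
\abs{f(x) - d_A(x)} \leq \delta \qquad \text{for every } x \in \Rn.
\]

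Next, by Sard's theorem the set of critical values of $f$ has Lebesgue measure zero, so I can select regular values
\[
c_+ \in (\rho_1 + \delta,\, \rho_2 - \delta), \qquad c_- \in (-\rho_2 + \delta,\, -\rho_1 - \delta),
\]
and define the open sublevel sets $G_\pm := \set{x \in \Rn : f(x) < c_\pm}$. Because $c_\pm$ are regular values of the smooth function $f$, the implicit function theorem shows that $\partial G_\pm = \set{f = c_\pm}$ is a smooth hypersurface of $\Rn$.

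Finally, I would verify the chain of inclusions using only $\abs{f - d_A} \leq \delta$. For the outer sandwich: if $x \in \nbd^{\rho_1}(A)$ then $d_A(x) \leq \rho_1$, hence $f(x) \leq \rho_1 + \delta < c_+$, so $x \in G_+$; conversely if $x \in G_+$ then $d_A(x) \leq f(x) + \delta < c_+ + \delta < \rho_2$, so $x \in \interior \nbd^{\rho_2}(A)$. For the inner sandwich: if $x \in \nbd^{-\rho_2}(A)$ then $\cl B_{\rho_2}(x) \subset A$ gives $\dist(x,A^c) \geq \rho_2$ and $d_A(x) \leq -\rho_2$, hence $f(x) \leq -\rho_2 + \delta < c_-$ and $x \in G_-$; conversely if $x \in G_-$ then $d_A(x) < c_- + \delta < -\rho_1$, which forces $\cl B_{\rho_1}(x) \subset A$, i.e., $x \in \nbd^{-\rho_1}(A)$. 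The main technical point—and the only thing that could go wrong—is calibrating $\delta$ against $\rho_2 - \rho_1$ so that the two intervals for $c_\pm$ are nonempty and the mollification error is absorbed on both sides of each inclusion; the choice $\delta = (\rho_2 - \rho_1)/4$ takes care of this uniformly.
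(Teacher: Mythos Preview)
Your argument is correct. The paper does not actually prove this lemma; it only cites \cite{GGP13} for the proof, so there is no in-paper argument to compare against. Your approach---mollify the signed distance $d_A$ to obtain a smooth $f$ with $\abs{f-d_A}\leq\delta$, then use Sard's theorem to pick regular values $c_\pm$ in the appropriate intervals and take sublevel sets---is the standard one, and is almost certainly what appears in \cite{GGP13}. The calibration $\delta=(\rho_2-\rho_1)/4$ is exactly what is needed, and your verification of the four nontrivial inclusions via the relations between $d_A$ and $\nbd^\rho$ is clean.

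One small remark: you implicitly use that $d_A$ is $1$-Lipschitz (not merely $2$-Lipschitz) to get $\abs{f-d_A}\leq\delta$ from a mollifier supported in $B_\delta(0)$. This is true for arbitrary $A$, but the mixed case $x\in A$, $y\in A^c$ is not entirely obvious; it follows by picking a point $z$ on the segment $[x,y]$ lying in $\cl A\cap\cl{A^c}$ and bounding $\dist(y,A)+\dist(x,A^c)\leq\abs{y-z}+\abs{x-z}=\abs{x-y}$. If you are not concerned with the sharp constant, taking $\delta=(\rho_2-\rho_1)/8$ would absorb a Lipschitz constant of $2$ and sidestep this point entirely.
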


Using the previous lemma,
we can show that any pair in $\mathcal P$ can be approximated
in Hausdorff distance by a smooth pair.

\begin{proposition}
\label{pr:smooth-pair-approx}
Let $(A_-, A_+) \in \mathcal P$ be a pair
and let $0 \leq \rho_1 < \rho_2$.
Then there exists a smooth pair $(G_-, G_+) \in \mathcal P$
such that
\begin{align}
\label{smooth-pair-approx}
\nbd^{\rho_1}(A_-, A_+) \preceq (G_-, G_+) \preceq \nbd^{\rho_2}(A_-, A_+).
\end{align}
\end{proposition}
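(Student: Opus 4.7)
The plan is to apply Lemma~\ref{le:smooth-nbd} separately to $A_+$ and $A_-$, using intermediate radii chosen so that the two resulting smooth sets are automatically at positive distance. The main obstacle is that the lemma is a single-set statement and its two applications are uncoupled, so without a deliberate buffer between the radii used for $A_+$ and those used for $A_-$ there is no reason for the smooth approximations to be disjoint, let alone at positive distance.

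Concretely, I would fix constants
\begin{align*}
\rho_1 \leq r_1 < r_2 < r_3 < \rho_2 \qquad \text{with } r_1 > 0,
\end{align*}
taking $r_1 = \rho_1$ when $\rho_1 > 0$ and any $r_1 \in (0, \rho_2)$ otherwise. Applying Lemma~\ref{le:smooth-nbd} to $A_+$ with radii $r_1 < r_2$ and keeping only the outer smooth approximation, call it $G_+$, yields
\begin{align*}
\nbd^{r_1}(A_+) \subset G_+ \subset \nbd^{r_2}(A_+).
\end{align*}
Applying the same lemma to $A_-$ with radii $r_3 < \rho_2$ and keeping only the inner smooth approximation, call it $G_-$, yields
\begin{align*}
\nbd^{-\rho_2}(A_-) \subset G_- \subset \nbd^{-r_3}(A_-).
\end{align*}
Both sets have $C^\infty$ boundaries by the lemma. (If one prefers to work intrinsically on $\Tn$, one may lift $A_\pm$ to their $\Z^n$-periodic representatives and choose the smooth approximations periodic.)

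Verification of \eqref{smooth-pair-approx} then reduces to the monotonicity of $\nbd^\rho$ in $\rho$, which is a direct consequence of parts (a) and (e) of Proposition~\ref{pr:nbd-properties}. Indeed, using $\rho_1 \leq r_1 < r_2 < \rho_2$ on the first line and $0 \leq \rho_1 < r_3 < \rho_2$ on the second,
\begin{align*}
\nbd^{\rho_1}(A_+) &\subset \nbd^{r_1}(A_+) \subset G_+ \subset \nbd^{r_2}(A_+) \subset \nbd^{\rho_2}(A_+), \\
\nbd^{-\rho_2}(A_-) &\subset G_- \subset \nbd^{-r_3}(A_-) \subset \nbd^{-\rho_1}(A_-),
\end{align*}
which is precisely the ordering $\nbd^{\rho_1}(A_-, A_+) \preceq (G_-, G_+) \preceq \nbd^{\rho_2}(A_-, A_+)$. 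For the positive-distance condition, observe that $x \in G_- \subset \nbd^{-r_3}(A_-)$ means $\cl B_{r_3}(x) \subset A_-$, and since $A_- \cap A_+ = \emptyset$ this forces $\dist(x, A_+) \geq r_3$. Conversely, $y \in G_+ \subset \nbd^{r_2}(A_+)$ satisfies $\dist(y, A_+) \leq r_2$, so the triangle inequality gives
\begin{align*}
\dist(x, y) \geq \dist(x, A_+) - \dist(y, A_+) \geq r_3 - r_2 > 0.
\end{align*}
Hence $\dist(G_-, G_+) \geq r_3 - r_2 > 0$, confirming that $(G_-, G_+)$ is a smooth pair with the required properties.
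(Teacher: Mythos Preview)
Your proof is correct and follows essentially the same approach as the paper: apply Lemma~\ref{le:smooth-nbd} once to $A_+$ (keeping the outer approximation) and once to $A_-$ (keeping the inner approximation), with a deliberate gap between the two radii windows so that the triangle inequality forces $\dist(G_-,G_+)>0$. The only differences are cosmetic: the paper uses the single spacing $\de=(\rho_2-\rho_1)/3$ where you use generic $r_1<r_2<r_3$, and you are a bit more explicit about the edge case $\rho_1=0$.
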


\begin{proof}
Let us set $\de := (\rho_2 - \rho_1)/3 > 0$.
We apply Lemma~\ref{le:smooth-nbd}
to the set $A_+$
and obtain a smooth set $G_+$ such that
\begin{align*}
\nbd^{\rho_1}(A_+) \subset G_+ \subset \nbd^{\rho_1 + \de}(A_+).
\end{align*}
Then we apply Lemma~\ref{le:smooth-nbd}
to the set $A_-$ 
and obtain a smooth set
and $G_-$
such that
\begin{align*}
\nbd^{-\rho_2}(A_-) \subset G_- \subset \nbd^{-\rho_2 + \de} (A_-).
\end{align*}
We claim that $\dist(G_-, G_+) \geq \de$.
Indeed, we can assume that both $G_-$ and $G_+$ are nonempty
and we choose any $x \in G_+$, $y \in G_-$
and $z \in A_+$.
Since by definition of $G_-$
we have $\dist (y, A_-^c) \geq \rho_2 - \de$
and $z \in A_+ \subset A_-^c$, clearly $\dist(y, z) \geq \rho_2 - \de$.
Therefore
\begin{align*}
\rho_1 + 2\de = \rho_2 - \de \leq \dist(y, z) \leq
\dist(y, x) + \dist(x, z).
\end{align*}
Since $\inf_{z\in A_+} \dist(x,z) = \dist(x, A_+) \leq \rho_1 + \de$
by the definition of $G_+$, we conclude that
$\dist (G_-, G_+) =
\inf_{y \in G_-} \inf_{x \in G_+} \dist (x,y) \geq \de$.

Therefore $(G_-, G_+)$ is a smooth pair
and by construction
\eqref{smooth-pair-approx} holds.
\end{proof}

Finally,
every smooth pair is an admissible pair.

\begin{proposition}
Suppose that $(G_-, G_+)\in \mathcal P$
is a smooth pair.
Then there exists a support function $\psi$ of $(G_-, G_+)$
such that $\psi \in \dom(\partial E)$.
\end{proposition}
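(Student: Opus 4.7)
The plan is to exhibit an explicit support function $\psi \in \Lip(\Tn)$ of $(G_-, G_+)$ together with a vector field $z \in X_2(\Tn)$ satisfying $z(x) \in \partial W(\nabla \psi(x))$ a.e., so that Corollary~\ref{co:char-lip} immediately yields $-\divo z \in \partial E(\psi)$ and hence $\psi \in \dom(\partial E)$. Because $(G_-, G_+)$ is smooth, the signed distance functions $d_\pm$ to $\partial G_\pm$ (with $d_\pm > 0$ inside $G_\pm$) are $C^\infty$ in tubular neighborhoods of $\partial G_\pm$ of some uniform width $\rho_0 > 0$, which I would choose small enough that $2\rho_0 < \dist(G_-, G_+)$ and $\rho_0$ is below the reach of each $\partial G_\pm$; these two tubular neighborhoods are then disjoint.

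For the support function, fix a smooth non-decreasing profile $\eta : \R \to [0, c_0]$ with $\eta \equiv 0$ on $(-\infty, 0]$, $\eta' > 0$ on $(0, \rho_1)$, and $\eta \equiv c_0$ on $[\rho_1, \infty)$ for some $0 < \rho_1 < \rho_0/2$; in addition, all derivatives of $\eta$ at $0$ are taken to vanish so that $\eta(d_\pm)$ extends smoothly by zero across $\partial G_\pm$. Define $\psi(x) := \eta(d_+(x)) - \eta(d_-(x))$. The two summands have supports in $\cl{G_+}$ and $\cl{G_-}$ respectively, and these closures are disjoint, so $\psi$ is a $C^\infty(\Tn) \cap \Lip(\Tn)$ support function of $(G_-, G_+)$; the cap in $\eta$ makes $\psi$ locally constant beyond the tubular neighborhoods, so the possible non-smoothness of $d_\pm$ near the medial axes never enters.

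To construct the vector field, pick a smooth cutoff $\chi : \R \to [0,1]$ with $\chi \equiv 1$ on $[-\rho_1, \rho_1]$ and $\supp \chi \subset (-\rho_0, \rho_0)$, and set
\begin{align*}
z(x) := \chi(d_+(x))\, \nabla W\pth{\nabla d_+(x)} + \chi(d_-(x))\, \nabla W\pth{-\nabla d_-(x)},
\end{align*}
with each summand taken to be $0$ outside the corresponding tubular neighborhood. On $\set{0 < d_+ < \rho_1}$ we have $\nabla \psi = \eta'(d_+)\nabla d_+$, a positive multiple of the unit vector $\nabla d_+$, so one-homogeneity of $W$ gives $\partial W(\nabla \psi) = \set{\nabla W(\nabla d_+)}$; since $\chi(d_+) = 1$ and $\chi(d_-) = 0$ there, $z$ equals exactly this value, and an analogous check works on $\set{0 < d_- < \rho_1}$. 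At all other points $\nabla \psi = 0$, and I would verify $z \in \partial W(0)$ from two facts: convexity together with Euler's identity $W(q) = \nabla W(q)\cdot q$ gives $\nabla W(q) \in \partial W(0)$ for every $q \neq 0$, and since $\partial W(0)$ is a convex set containing the origin, any scalar multiple $\chi(d_\pm)\nabla W(\pm \nabla d_\pm)$ with coefficient in $[0,1]$ remains in $\partial W(0)$ as a convex combination with $0$.

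The main technical point to emphasize is ensuring $\divo z \in L^2(\Tn)$ rather than a mere Radon measure: a naive $z$ jumping abruptly from $\nabla W(\nabla d_+)$ down to $0$ across some hypersurface would contribute a surface-supported term to $\divo z$ and fall outside $X_2(\Tn)$. The smooth cutoff $\chi$, composed with the smooth $d_\pm$, tapers $z$ continuously to $0$ entirely inside the tubular neighborhoods where $\nabla d_\pm$ is itself smooth, so $z \in C^\infty(\Tn;\Rn)$ and in particular $\divo z \in L^\infty(\Tn) \subset L^2(\Tn)$; this gives $z \in X_2(\Tn)$, and Corollary~\ref{co:char-lip} then delivers $-\divo z \in \partial E(\psi)$, finishing the argument.
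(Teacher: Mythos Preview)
Your proof is correct and follows essentially the same approach as the paper: construct $\psi$ as a difference of truncated signed-distance functions to $\partial G_\pm$, and build $z$ by smoothly cutting off $\nabla W(\pm\nabla d_\pm)$ in disjoint tubular neighborhoods so that $z \in X_2(\Tn)$ and Corollary~\ref{co:char-lip} applies. The only differences are cosmetic---you make $\psi$ itself smooth via a smooth profile $\eta$ whereas the paper uses the merely Lipschitz $\chi(s) = \max(0,\min(\de,s))$, and you spell out more carefully why $\chi(d_\pm)\nabla W(\pm\nabla d_\pm) \in \partial W(0)$ on the facet.
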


\begin{proof}
Since $\partial G_\pm$ is smooth and $\Tn$ is compact,
there exists $\de_\pm$ such that
$d_{G_\pm}$ is smooth in the set
$\set{x : d_{G_\pm} < \de_\pm}$; see \cite{DZ11}.
Let us take
\begin{align*}
\de := \ov3 \min \set{\de_-, \de_+, \dist(G_-, G_+)} > 0.
\end{align*}
Introduce the cutoff functions $\chi \in \Lip(\R)$
and $\ta \in C^\infty_c(\R)$
such
that
\begin{align*}
\chi(s) := \max(0, \min(\de, s))
\end{align*}
and $\ta(s) \in [0,1]$ with $\ta(s) = 1$ on $[0,\de]$
and $\ta(s) = 0$ on $\R \setminus (-\de, 2\de)$.

We define
\begin{align*}
\psi(x) := \chi(d_{G_+^c}(x)) - \chi(d_{G_-^c}(x))
=\min \set{\de, \dist(x, G_+^c)} - \min \set{\de, \dist(x, G_-^c)}
\end{align*}
and a vector field
\begin{align*}
z(x) = 
\ta(d_{G_+^c}(x)) \partial^0 W(\nabla d_{G_+^c}(x))
+\ta(d_{G_-^c}(x)) \partial^0 W(-\nabla d_{G_-^c}(x)).
\end{align*}

Clearly $\psi \in \Lip(\Tn)$,
$z \in \Lip(\Tn)$
and $\psi$ is a support function of $(G_-, G_+)$.
It is also easy to see that
$z(x) \in \partial W(\nabla \phi(x))$ for a.e. $x \in \Tn$.
In particular, $-\divo z \in \partial E(\psi)$
and therefore $\psi \in \dom(\partial E)$
by Corollary~\ref{co:char-lip}.
\end{proof}

\subsection{Resolvent equation}

It is possible to approximate the minimal section of the subdifferential
$-\partial^0 E$
via a resolvent problem on $\Tn$.
That is, for given $\psi \in L^2(\Tn)$ and $a > 0$
find $\psi_a \in L^2(\Tn)$ that satisfies
\begin{align}
\label{resolvent-problem}
\psi_a + a \partial E(\psi_a) \ni \psi.
\end{align}
The standard theory of calculus of variations
yields that this problem has a unique solution
$\psi_a \in \dom(\partial E)$;
see \cite{Evans}.
We have the following well-known result \cite{Attouch, Evans}.

\begin{proposition}
\label{pr:resolvent-convergence}
If $\psi \in \dom(\partial E)$
then
\begin{align*}
\frac{\psi_a - \psi}{a} \to -\partial^0 E(\psi)
\qquad \text{in $L^2(\Tn)$ as $a \to 0$,}
\end{align*}
where $\psi_a$ is the unique solution of \eqref{resolvent-problem}
\end{proposition}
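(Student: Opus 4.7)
The plan is to carry out the standard Yosida approximation argument that underlies the generation of nonlinear semigroups by maximal monotone operators (as in \cite{Br} or \cite{Attouch}). Write $v_a := (\psi - \psi_a)/a$, so that the resolvent inclusion \eqref{resolvent-problem} reads $v_a \in \partial E(\psi_a)$, and let $v_0 := \partial^0 E(\psi)$ denote the minimal section, well-defined because $\psi \in \dom(\partial E)$. The target statement then reads $-v_a \to v_0$ strongly in $L^2(\Tn)$ as $a \to 0$.

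The first step is to derive the uniform bound $\norm{v_a}_{L^2(\Tn)} \leq \norm{v_0}_{L^2(\Tn)}$. Testing the defining subdifferential inequality for $v_a \in \partial E(\psi_a)$ at the point $\psi$ and the one for $v_0 \in \partial E(\psi)$ at the point $\psi_a$ and adding them yields $0 \geq a \norm{v_a}^2 - a \ang{v_0, v_a}_{L^2}$, which by Cauchy--Schwarz gives $\norm{v_a} \leq \norm{v_0}$. In particular $\norm{\psi_a - \psi}_{L^2} = a \norm{v_a} \leq a\norm{v_0} \to 0$, so $\psi_a \to \psi$ strongly in $L^2(\Tn)$.

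Next, I would extract a weakly convergent subsequence $v_{a_k} \rightharpoonup v^*$ in $L^2(\Tn)$ and identify the limit. Since $E$ is convex and lower semicontinuous on $L^2(\Tn)$, its subdifferential $\partial E$ is maximal monotone, hence its graph is demiclosed; combined with the strong convergence $\psi_{a_k} \to \psi$ and the weak convergence $v_{a_k} \rightharpoonup v^*$, this yields $v^* \in \partial E(\psi)$. Weak lower semicontinuity of the $L^2$ norm together with the uniform bound then gives $\norm{v^*} \leq \liminf_k \norm{v_{a_k}} \leq \norm{v_0}$, and the defining minimality of $\partial^0 E(\psi)$ inside the closed convex set $\partial E(\psi)$ forces $v^* = v_0$. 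Since the limit is unique, the whole family converges weakly: $v_a \rightharpoonup v_0$.

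Finally, the upgrade to strong convergence follows from $\limsup_{a\to 0} \norm{v_a} \leq \norm{v_0} \leq \liminf_{a\to 0} \norm{v_a}$, where the second inequality is weak lower semicontinuity applied to the full weak limit. This produces $\norm{v_a} \to \norm{v_0}$, and combined with weak convergence $v_a \rightharpoonup v_0$ in the Hilbert space $L^2(\Tn)$ this yields strong convergence $v_a \to v_0$, i.e., $(\psi_a - \psi)/a = -v_a \to -\partial^0 E(\psi)$. The only nontrivial ingredient is the demiclosedness of the graph of $\partial E$; on the torus this is standard, but if a self-contained argument is desired one can pass to the limit directly in the subdifferential inequality $E(\eta) \geq E(\psi_{a_k}) + \ang{v_{a_k}, \eta - \psi_{a_k}}_{L^2}$ using lower semicontinuity of $E$ and the strong-weak convergence of $(\psi_{a_k}, v_{a_k})$.
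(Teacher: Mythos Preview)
Your argument is correct and is precisely the classical Yosida approximation proof; the paper itself does not supply a proof of this proposition at all but simply cites it as a well-known fact from \cite{Attouch,Evans}, so there is nothing to compare against beyond noting that your write-up is the standard derivation one would find in those references. One minor slip: in the first paragraph you state the target as ``$-v_a \to v_0$'', but with your sign conventions ($v_a = (\psi-\psi_a)/a$ and $v_0 = \partial^0 E(\psi)$) the goal is $v_a \to v_0$, equivalently $-v_a \to -v_0$, which is exactly what you correctly establish and restate in the final paragraph.
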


Moreover, a comparison theorem for  \eqref{resolvent-problem}
was proved in \cite{CasellesChambolle06}.

\begin{proposition}
\label{pr:resolvent-comparison}
Let $\psi^1_a$, $\psi^2_a \in L^2(\Tn)$
be two solutions of \eqref{resolvent-problem}
with $a > 0$ and
right-hand sides $\psi^1, \psi^2 \in L^\infty(\Tn)$,
respectively.
If $\psi^1 \leq \psi^2$ then $\psi_a^1 \leq \psi_a^2$.
\end{proposition}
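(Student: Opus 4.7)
The plan is to establish the comparison via the standard \emph{T-accretivity} (or lattice) property of the subdifferential of a convex integral functional depending on the gradient. Concretely, I will verify the identity
\begin{align*}
E(\min(u,v)) + E(\max(u,v)) = E(u) + E(v)
\end{align*}
for all $u, v \in \dom(\partial E)$, and then use it to extract a one-sided monotonicity inequality involving the positive part $(\psi^1_a - \psi^2_a)_+$.

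First, I would prove the lattice identity. For Lipschitz $u,v$ the pointwise relations $\nabla \max(u,v) = \nabla u$ on $\set{u > v}$, $\nabla \max(u,v) = \nabla v$ on $\set{u < v}$, together with $\nabla u = \nabla v$ a.e.\ on $\set{u = v}$, immediately yield the identity after integration of $W(\nabla\cdot)$. I would then extend to $u, v \in BV(\Tn)$ by truncation and mollification, exploiting one-homogeneity and convexity of $W$ and the Reshetnyak lower/upper semicontinuity for measures $W(Du)$; the full statement for $E$ on $L^2(\Tn) \cap BV(\Tn)$ is exactly of the type proved in \cite{CasellesChambolle06} cited in the Proposition.

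Next, let $v^i := a^{-1}(\psi^i - \psi^i_a) \in \partial E(\psi^i_a)$ for $i = 1,2$, set $w^1 := \min(\psi^1_a, \psi^2_a)$, $w^2 := \max(\psi^1_a, \psi^2_a)$, and observe $w^1 - \psi^1_a = -(\psi^1_a - \psi^2_a)_+$, $w^2 - \psi^2_a = (\psi^1_a - \psi^2_a)_+$. Testing the two subdifferential inequalities
\begin{align*}
E(w^1) \geq E(\psi^1_a) + \int_\Tn v^1 (w^1 - \psi^1_a), \qquad
E(w^2) \geq E(\psi^2_a) + \int_\Tn v^2 (w^2 - \psi^2_a),
\end{align*}
and adding them, the lattice identity eliminates the energies and I am left with
\begin{align*}
0 \geq \int_\Tn (v^2 - v^1)(\psi^1_a - \psi^2_a)_+.
\end{align*}
Substituting $a(v^2 - v^1) = (\psi^2 - \psi^1) - (\psi^2_a - \psi^1_a)$ and using $\psi^1 \leq \psi^2$ yields
\begin{align*}
0 \geq \int_\Tn (\psi^2 - \psi^1)(\psi^1_a - \psi^2_a)_+ + \int_\Tn (\psi^1_a - \psi^2_a)_+^2
\geq \int_\Tn (\psi^1_a - \psi^2_a)_+^2,
\end{align*}
forcing $(\psi^1_a - \psi^2_a)_+ = 0$ a.e.\ and hence $\psi^1_a \leq \psi^2_a$.

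The main obstacle is the lattice identity for $E$ at the level of $BV(\Tn)$, since the singular part $D^s u$ must be handled carefully and one-homogeneity of $W$ is essential so that $W(D u)$ is a well-defined Radon measure with the correct behaviour under truncation. The reduction from $L^\infty$-data to the needed inequality is otherwise elementary; one need not invoke any additional regularity of $\psi^i_a$ beyond $\psi^i_a \in \dom(\partial E)$ provided by the variational problem.
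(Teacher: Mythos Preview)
The paper does not supply its own proof of this proposition; it simply quotes the result from \cite{CasellesChambolle06}. Your argument is the standard $T$-accretivity route and is correct as written. Two minor remarks: first, you only need the submodularity inequality $E(\min(u,v)) + E(\max(u,v)) \leq E(u) + E(v)$, not the full equality, and the inequality is what one actually gets cleanly at the $BV$ level via Reshetnyak-type arguments; second, the lattice identity (or inequality) is needed for $u,v \in \dom(E) = L^2 \cap BV$, not merely $\dom(\partial E)$, but since $\psi^i_a \in \dom(\partial E) \subset \dom(E)$ and $\min,\max$ preserve $BV \cap L^2$, this causes no difficulty.
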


\subsection{Monotonicity of nonlocal curvatures}

First, we state a useful lemma for generating
support functions in the domain of the subdifferential
$\partial E$
given an admissible pair and an upper semi-continuous function.

\begin{lemma}
\label{le:support-construction}
Let $\ta \in USC(\Tn)$
and let $(G_-, G_+) := \pair(\ta)$.
Suppose that $(H_-, H_+) \in \mathcal P$
is an admissible pair
and that there exists $\delta > 0$
such that
\begin{align*}
(G_-, G_+) \preceq \nbd^{-\de}(H_-, H_+).
\end{align*}
Then there exists a support function
$\psi$ of $(H_-, H_+)$
such that $\psi \in \dom(\partial E)$
and
\begin{align*}
\ta \leq \psi \qquad \text{on $\Tn$.}
\end{align*}
If, moreover, $\hat \psi \in \dom(\partial E)$
is a support function of $(H_-, H_+)$,
we can take $\psi$ such that
$-\partial^0 E(\psi) = -\partial^0 E(\hat \psi)$.
\end{lemma}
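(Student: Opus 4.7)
The strategy is to take $\psi := \alpha[\hat\psi]_+ - \beta[\hat\psi]_-$ for constants $\alpha,\beta > 0$ to be chosen, where $\hat\psi \in \dom(\partial E)$ is either the function supplied in the ``moreover'' hypothesis or else any support function of $(H_-, H_+)$ coming from the admissibility assumption. By Remark~\ref{re:subdiff-cone}, such a $\psi$ is automatically a Lipschitz support function of $(H_-, H_+)$, belongs to $\dom(\partial E)$, and satisfies $\partial E(\psi) = \partial E(\hat\psi)$; in particular the minimal sections coincide, settling the moreover claim. The real content is choosing $\alpha$ and $\beta$ so that $\theta \leq \psi$ pointwise on $\Tn$.

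The $\delta$-margin hypothesis $(G_-, G_+) \preceq \nbd^{-\delta}(H_-, H_+)$ yields two uniform estimates. First, $G_+ \subset \nbd^{-\delta}(H_+) = \{x : \dist(x, H_+^c) > \delta\}$, so taking closures gives $\cl{G_+} \subset \{x : \dist(x, H_+^c) \geq \delta\} \subset H_+$; continuity and positivity of $\hat\psi$ on $H_+$, together with compactness of $\cl{G_+}$, produce $c := \min_{\cl{G_+}} \hat\psi > 0$. Second, since $H_-$ is open, every $x \in \partial H_-$ lies in $H_- + \cl B_\delta(0) = \nbd^\delta(H_-)$ (pick any approximating sequence inside $H_-$), whence $\cl{H_-} \subset \nbd^\delta(H_-) \subset G_- = \{\theta < 0\}$. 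Since $\theta$ is upper semicontinuous on the compact set $\cl{H_-}$ and strictly negative there, the maximum is attained at a negative value, so $c'' := -\max_{\cl{H_-}} \theta > 0$ and $\theta \leq -c''$ throughout $\cl{H_-}$. This USC extraction is the essential step: without it, $\theta$ could in principle approach $0$ along a sequence in $\cl{H_-}$ and the scaling below would collapse.

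Setting $M := \max_{\Tn}\theta$ and $A := \norm{\hat\psi}_\infty$, I take $\alpha := M/c$ and $\beta := c''/A$ (with obvious trivial adjustments if $G_+ = \emptyset$ or $H_- = \emptyset$), and verify $\theta \leq \psi$ in three cases. On $\cl{G_+}$, $\psi = \alpha\hat\psi \geq \alpha c = M \geq \theta$. On $H_-$, $\psi = \beta\hat\psi \geq -\beta A = -c'' \geq \theta$. Everywhere else, $x \notin \cl{G_+}$ forces $\theta(x) \leq 0$, while $x \notin H_-$ forces $\hat\psi(x) \geq 0$ so that $\psi(x) = \alpha[\hat\psi(x)]_+ \geq 0 \geq \theta(x)$. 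The three cases are genuinely disjoint because $\cl{G_+} \cap G_- = \emptyset$ (by USC of $\theta$: any limit from $G_+$ has $\theta \geq 0$) and $H_- \subset G_-$. Thus apart from the USC-plus-compactness argument producing $c''$, the proof is bookkeeping powered by Remark~\ref{re:subdiff-cone}.
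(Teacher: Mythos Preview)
Your proof is correct and follows essentially the same approach as the paper's: rescale an existing support function $\hat\psi$ to $\psi = \alpha[\hat\psi]_+ - \beta[\hat\psi]_-$, invoke Remark~\ref{re:subdiff-cone} for $\partial E(\psi) = \partial E(\hat\psi)$, and use the strict inclusions $\cl{G_+} \subset H_+$, $\cl{H_-} \subset G_-$ together with compactness and semicontinuity to choose $\alpha,\beta > 0$. Your choice $\beta = c''/A$ differs cosmetically from the paper's $\beta = \max_{\cl{H_-}}\ta / \min_{\Tn}\hat\psi$, and you supply more detail in the case analysis where the paper simply asserts the inequality, but the argument is the same.
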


\begin{proof}
Since $(H_-, H_+)$
is an admissible facet, there exists
a support function $\psi_H \in \dom(\partial E)$.
By the definition of $(G_-, G_+)$ and $\psi_H$,
we immediately have that $\ta \leq \psi_H$
on $G_+^c \cap H_-^c$.
We will modify the function $\psi_H$ on the rest of $\Tn$
to guarantee that the ordering holds on the whole $\Tn$.
From the strict ordering of the pairs by $\de > 0$,
we immediately get
\begin{align*}
\cl{G_+} \subset H_+, \qquad \cl{H_-} \subset G_-.
\end{align*}
We define a new support function of $(H_-, H_+)$
as
\begin{align*}
\psi(x) := \al [\psi_H]_+ - \be [\psi_H]_-,
\end{align*}
where $\al$ and $\be$ are given positive constants specified below
and $[\cdot]_+$ and $[\cdot]_-$ are the positive
and negative parts.
$\psi$ is still a support function of $(H_-, H_+)$
and Remark~\ref{re:subdiff-cone} yields that
$\psi \in \dom(E)$.

We shall determine the constants $\al$ and $\beta$.
If $G_+ = \emptyset$
then $\ta \leq \psi_H$ on $G_+$ trivially
and we set $\al = 1$.
Otherwise, by compactness, semi-continuity
and the definition of support functions,
we have
\begin{align*}
\al := \frac{\max_\Tn \ta}{\min_{\cl{G_+}} \psi_H} > 0.
\end{align*}
Similarly,
if $H_- = \emptyset$ we set $\be = 1$,
otherwise
\begin{align*}
\qquad \be := \frac{\max_{\cl{H_-}} \ta}{\min_\Tn \psi_H} > 0.
\end{align*}
We observe that such a choice of $\al$ and $\be$
guarantees that
\begin{align}
\label{psi-order}
\ta \leq \psi \qquad \text{on } \Tn.
\end{align}

Finally,
we can take $\psi_H = \hat \psi$.
Then Remark~\ref{re:subdiff-cone} yields that
$-\partial^0 E(\psi) = -\partial^0 E(\psi_H)$.
\end{proof}

The following monotonicity result plays the role of a comparison principle for
admissible pairs.
The analogous result in \cite{GGP13} was stated for ordered smooth pairs,
and thanks to this extra regularity we did not need to
assume that the pairs are ordered strictly.

\begin{proposition}
\label{pr:monotonicity}
Suppose that $(G_-, G_+) \in \mathcal P$
and $(H_-, H_+) \in \mathcal P$
are two open pairs
that are moreover strictly ordered, i.e.,
there exists $\de > 0$ such that
\begin{align*}
\nbd^\de(G_-, G_+) \preceq (H_-, H_+).
\end{align*}
Then
for any support function $\psi_G$ of $(G_-, G_+)$
and any support function $\psi_H$ of $(H_-, H_+)$
such that $\psi_G, \psi_H \in \dom(\partial E)$
we have
\begin{align*}
-\partial^0 E(\psi_G) \leq
-\partial^0 E(\psi_H)
\qquad
\text{a.e. on $G_-^c \cap G_+^c \cap H_-^c \cap H_+^c$.}
\end{align*}
\end{proposition}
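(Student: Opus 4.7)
The plan is to approximate $-\partial^0 E(\psi_G)$ and $-\partial^0 E(\psi_H)$ by resolvent difference quotients from Proposition~\ref{pr:resolvent-convergence} and to exploit the comparison principle for the resolvent given in Proposition~\ref{pr:resolvent-comparison}. A direct pointwise comparison of $\psi_G$ and $\psi_H$ on the whole of $\Tn$ is not available---support functions of distinct pairs carry unrelated amplitudes away from the facet---so the first step is to replace $\psi_H$ by a more convenient support function of $(H_-, H_+)$ that dominates $\psi_G$ but yields the same minimal section.

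Concretely, observe that the hypothesis $\nbd^\de(G_-, G_+) \preceq (H_-, H_+)$ is equivalent, via Proposition~\ref{pr:nbd-properties}(6), to $(G_-, G_+) \preceq \nbd^{-\de}(H_-, H_+)$. Since $\psi_G$ is Lipschitz and hence upper semi-continuous, and $\pair(\psi_G) = (G_-, G_+)$ by the definition of support function, Lemma~\ref{le:support-construction} applied with $\ta := \psi_G$ and $\hat\psi := \psi_H$ produces a support function $\tilde\psi_H \in \dom(\partial E)$ of $(H_-, H_+)$ satisfying $\psi_G \leq \tilde\psi_H$ on $\Tn$ and $-\partial^0 E(\tilde\psi_H) = -\partial^0 E(\psi_H)$. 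It therefore suffices to establish $-\partial^0 E(\psi_G) \leq -\partial^0 E(\tilde\psi_H)$ a.e. on $N := G_-^c \cap G_+^c \cap H_-^c \cap H_+^c$.

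Now fix $a > 0$ and let $(\psi_G)_a$, $(\tilde\psi_H)_a \in \dom(\partial E)$ be the unique solutions of the resolvent problem \eqref{resolvent-problem} for the right-hand sides $\psi_G$ and $\tilde\psi_H$, respectively. Proposition~\ref{pr:resolvent-comparison} yields $(\psi_G)_a \leq (\tilde\psi_H)_a$ a.e. on $\Tn$. The key observation is that by the very definition of support function both $\psi_G$ and $\tilde\psi_H$ vanish identically on $N$; hence on $N$ the Euler quotients satisfy
\begin{align*}
\frac{(\psi_G)_a - \psi_G}{a} = \frac{(\psi_G)_a}{a} \leq \frac{(\tilde\psi_H)_a}{a} = \frac{(\tilde\psi_H)_a - \tilde\psi_H}{a}.
\end{align*}
Sending $a \to 0+$, Proposition~\ref{pr:resolvent-convergence} gives convergence of the two outer quantities in $L^2(\Tn)$ to $-\partial^0 E(\psi_G)$ and $-\partial^0 E(\tilde\psi_H) = -\partial^0 E(\psi_H)$, respectively; extracting an a.e. convergent subsequence on $N$ passes the pointwise inequality to the limit.

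The main obstacle is precisely the first step: support functions of distinct pairs admit no a priori global ordering, so one must manufacture a comparable support function without altering the nonlocal curvature on either facet. The invariance of $\partial E$ under separate positive rescalings of the positive and negative parts of a Lipschitz function (Remark~\ref{re:subdiff-cone}), combined with the margin $\de > 0$ that keeps $\cl{G_+} \subset H_+$ and $\cl{H_-} \subset G_-$, is exactly what makes the construction in Lemma~\ref{le:support-construction} applicable, and is where the strict (rather than merely non-strict) ordering hypothesis enters.
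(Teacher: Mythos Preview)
Your proof is correct and follows essentially the same approach as the paper: reduce to the ordered case $\psi_G \leq \tilde\psi_H$ via Lemma~\ref{le:support-construction} (using the strict ordering to verify its hypothesis), apply the resolvent comparison Proposition~\ref{pr:resolvent-comparison}, and pass to the limit along an a.e.\ convergent subsequence furnished by Proposition~\ref{pr:resolvent-convergence}, exploiting that both support functions vanish on the facet intersection. Your justification of the hypothesis $(G_-,G_+)\preceq\nbd^{-\de}(H_-,H_+)$ via Proposition~\ref{pr:nbd-properties}(f) is in fact more explicit than the paper's own proof.
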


\begin{proof}
We apply the comparison principle for the resolvent problem
\eqref{resolvent-problem};
it is also possible to use the evolution equation
as in \cite{GGM}.

Let us denote the intersection of the facets as $D$,
\begin{align*}
D := G_-^c \cap G_+^c \cap H_-^c \cap H_+^c.
\end{align*}
We can assume that $\psi_G \leq \psi_H$.
Indeed, if this ordering does not hold
we replace $\psi_H$ with the function $\psi$
provided by 
Lemma~\ref{le:support-construction}
applied with $\ta = \psi_G$ and $\hat \psi = \psi_H$
since
$-\partial^0 E(\psi) = - \partial^0 E(\psi_H)$.

Clearly, the support functions coincide with zero
on the intersection of the facets, i.e., 
\begin{align}
\label{psi-zero}
\psi_G = \psi_H = 0 \qquad \text{on $D$.}
\end{align}

For each $a > 0$,
we find the solution $\psi^i_a$ of
the resolvent problem \eqref{resolvent-problem}
with right-hand side $\psi_i$, $i=G,H$.
Due to the $L^2$ convergence in Proposition~\ref{pr:resolvent-convergence},
we can find a subsequence $a_k \to 0$ as $k \to \infty$
such that
$(\psi^i_{a_k} - \psi_i)/a_k \to -\partial^0 E(\psi_i)$
 a.e. on $\Tn$ as $k \to \infty$
for $i=G,H$.

The comparison principle, Theorem~\ref{pr:resolvent-comparison},
and \eqref{psi-order}
imply that $\psi^G_{a_k} \leq \psi^H_{a_k}$.
Moreover, by \eqref{psi-zero},
$\psi^i_{a_k} - \psi^i = \psi^i_{a_k}$
on $D$
for all $k$.
Therefore
\begin{align*}
-\partial^0 E(\psi_G)
    &= \lim_{k\to\infty} \frac{\psi^G_{a_k}}{a_k}\\
    &\leq \lim_{k\to\infty} \frac{\psi^H_{a_k}}{a_k}
    = -\partial^0 E(\psi_H)
    &&\text{a.e. in $D$}
\end{align*}
and the comparison principle for $-\partial^0 E$ is established.
\qedhere\end{proof}

\section{Viscosity solutions}
\label{sec:viscosity-solutions}

This section finally introduces viscosity solutions of \eqref{tvf}.
As in the previous work \cite{GGP13},
it is necessary to separately define
test functions for 
the zero gradient of a solution and the nonzero gradient.
In this section we work on the parabolic cylinder
$Q := \Tn \times (0,T)$ for some
$T > 0$.

\begin{definition}
Let $(A_-, A_+) \in \mathcal P$ be a smooth pair
and let $\hat x \in \Tn \setminus \cl{A_- \cup A_+}$.
Function $\varphi(x,t) = \psi(x) + g(t)$,
where $\psi \in \Lip(\Tn)$ and $g \in C^1(\R)$, is
called an \emph{admissible faceted test function}
at $\hat x$
with a pair $(A_-, A_+)$
if $\psi \in \dom(\partial E)$
and $\psi$ is a support function
of the pair $(A_-, A_+)$.
\end{definition}

\begin{definition}
We say that an admissible faceted function $\varphi$
at $\hat x$ with a pair $(A_-, A_+)$
is in a general position of radius $\eta > 0$
with respect to $u : \cl Q \to \R$ at
$(\hat x, \hat t) \in Q$ if
$\cl B_\eta(\hat x) \subset \Tn \setminus \cl{A_- \cup A_+}$
and
\begin{align*}
u(x, t) - \inf_{h \in \cl B_\eta(0)} \vp (x - h, t)
\leq u(\hat x, \hat t) - \vp(\hat x, \hat t)
\qquad \text{for all $x \in \Tn$, $t \in [\hat t - \eta, \hat t + \eta]$}.
\end{align*} 
\end{definition}

\begin{definition}[Viscosity solutions]
\label{def:visc-sol}
An upper semi-continuous function $u : \cl Q \to \R$
is a \emph{viscosity subsolution}
of \eqref{tvf}
if
the following holds:

\begin{enumerate}[(i)]

\item (\emph{faceted test})
If $\vp(x,t) = \psi(x) + g(t)$
is an admissible faceted test function such that $\vp$ is
in general position of radius $\eta$
with respect to $u$
at a point $(\hat x, \hat t) \in Q$
then there exists $\de \in (0, \eta)$ such that
\begin{align*}
\vp_t(\hat x, \hat t)
    + F\pth{0, \essinf_{B_\de(\hat x)} \bra{-\partial^0 E(\psi)}} \leq 0.
\end{align*}

\item (\emph{conventional test})
If $\vp \in C^{2,1}_{x,t}(U)$ in a neighborhood $U \subset Q$ of
a point
$(\hat x, \hat t)$,
such that $u - \vp$ has a local maximum at $(\hat x, \hat t)$
and $\abs{\nabla \vp} (\hat x, \hat t) \neq 0$, then
\begin{align*}
\vp_t(\hat x, \hat t)
    + F\pth{\nabla \vp(\hat x, \hat t),
        k(\nabla \vp(\hat x,\hat t),
        \nabla^2 \vp(\hat x,\hat t))} \leq 0,
\end{align*}
where $\nabla^2$ is the Hessian and
\begin{align}
\label{Lambda-nondeg}
    k(p, X) &:= \trace \bra{(\nabla^2 W)(p) X}
        && \text{for $p \in \Rn \setminus \set0$,
            $X \in \mathcal{S}^n$},
\end{align}
so that 
$k(\nabla \vp(\hat x, \hat t), \nabla^2 \vp(\hat x, \hat t))
    = \bra{\divo (\nabla W)(\nabla \psi)}(\hat x, \hat t)$.
Here $\mathcal{S}^n$ is the set of $n \times n$-symmetric
matrices.
\end{enumerate}

A \emph{viscosity supersolution} can be defined similarly as
a lower semi-continuous function, replacing maximum by minimum,
$\leq$ by $\geq$, and $\essinf$ by $\esssup$. Furthermore, in (i)
$\vp$ must be such that $-\vp$ is in a general position of radius $\eta$
with respect to $-u$
(see also Remark~\ref{re:support-function-symmetry}).

Function $u$ is a \emph{viscosity solution}
if it is both a subsolution and supersolution.
\end{definition}

The next result indicates that it is possible to
find an admissible test function in general position
for a given upper semi-continuous function $u$
given an admissible facet that is in general position
with respect to the facet of $u$.

\begin{lemma}
\label{le:faceted-construction}
Suppose that $(H_-, H_+) \in \mathcal P$
is an admissible pair,
and let $u \in USC(Q)$ be a bounded upper semi-continuous
function on $Q := \Tn \times (0,T)$ for some $T >0$,
and let $g \in C^1(\R)$.
Moreover,
let $(\hat x,\hat t) \in Q$
be a point such that $\hat x \in \Tn \setminus \cl{H_- \cup H_+}$.
Suppose that there is $\de > 0$
such that
\begin{align*}
\pair(u(\cdot, t) - u(\hat x, \hat t) - g(t))
\preceq \nbd^{-\delta}(H_-, H_+)
\qquad \text{for $t \in (\hat t - \delta, \hat t + \delta)$.}
\end{align*}
Then there exists a support function $\psi \in \dom(\partial E)$
of $(H_-, H_+)$
and $\eta > 0$
such that
$\vp(x,t) = \psi(x) + g(t)$
is an admissible faceted test function
at $(\hat x, \hat t)$ with pair $(H_-, H_+)$
in a general position of radius $\eta$
with respect to $u$ at a point $(\hat x, \hat t)$.
\end{lemma}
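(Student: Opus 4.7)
The plan is to reduce the construction of $\psi$ to a direct application of Lemma~\ref{le:support-construction} by producing an upper semicontinuous envelope that captures exactly the deficit $\psi$ must dominate.

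Since $\hat x$ lies in the open set $\Tn \setminus \cl{H_- \cup H_+}$, I first fix $\eta_0 > 0$ with $\cl B_{\eta_0}(\hat x) \cap \cl{H_- \cup H_+} = \emptyset$. I then pick $\eta \in (0, \min(\eta_0, \delta/3))$, small enough that $|g(t) - g(\hat t)|$ is suitably controlled on $[\hat t - \eta, \hat t + \eta]$ (possible since $g \in C^1$). Define the envelope
\begin{align*}
\theta(y) := \sup_{h \in \cl B_\eta(0),\ t \in [\hat t - \eta, \hat t + \eta]} \bigl[u(y + h, t) - u(\hat x, \hat t) - (g(t) - g(\hat t))\bigr].
\end{align*}
Since $u$ is USC and $g$ is continuous, $\theta$ is USC as the supremum of a USC function over a compact parameter set. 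A direct rearrangement, substituting $y = x - h$ and isolating $\psi$, shows that the general-position inequality in the definition of general position is equivalent to the single pointwise bound $\psi(y) \geq \theta(y)$ on $\Tn$.

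The crux is to verify the strict pair ordering $\pair(\theta) \preceq \nbd^{-\delta/3}(H_-, H_+)$ needed to invoke Lemma~\ref{le:support-construction}. For the positive part, if $\theta(y) > 0$ then compactness produces $h \in \cl B_\eta(0)$ and $t$ realizing a positive integrand; by the pair hypothesis (and absorbing the $O(\eta)$ discrepancy caused by $g(t) - g(\hat t)$ into the $\delta$-buffer) the point $y + h$ lies in a suitably shrunk enlargement of $H_+$, whereupon Proposition~\ref{pr:nbd-properties}(5) together with $\eta \leq \delta/3$ pushes $y$ itself into $\nbd^{-\delta/3}(H_+)$. Symmetrically, if $y \in \nbd^{\delta/3}(H_-)$ then $y + \cl B_\eta(0) \subset \nbd^{2\delta/3}(H_-) \subset \nbd^{\delta}(H_-)$, so the hypothesis forces strict negativity of the integrand at every $(h, t) \in \cl B_\eta(0) \times [\hat t - \eta, \hat t + \eta]$; USC of the integrand plus compactness of the parameter set upgrade this pointwise strict negativity to $\theta(y) < 0$.

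With the strict ordering in hand, Lemma~\ref{le:support-construction} applied with $\ta := \theta$ and the admissible pair $(H_-, H_+)$ delivers a support function $\psi \in \dom(\partial E)$ of $(H_-, H_+)$ satisfying $\theta \leq \psi$ on $\Tn$. Setting $\vp(x, t) := \psi(x) + g(t)$, the object $\vp$ is by construction an admissible faceted test function at $\hat x$ with pair $(H_-, H_+)$, while the bound $\psi \geq \theta$ translates by the rearrangement above to precisely the general-position inequality of radius $\eta$. The main obstacle is the pair-ordering step, because it must simultaneously absorb a spatial sup-convolution of radius $\eta$, a temporal window of size $\eta$, and the $O(\eta)$ shift from $g$; the $\nbd^\rho$-calculus of Proposition~\ref{pr:nbd-properties} combined with the strict $\delta$-buffer in the hypothesis is exactly what accommodates all three simultaneously.
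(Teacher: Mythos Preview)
Your approach is the same as the paper's: define the envelope $\theta$ by sup-convolving $u(\cdot,t)-u(\hat x,\hat t)-g(t)$ over $\cl B_\eta(0)\times[\hat t-\eta,\hat t+\eta]$, note that general position is equivalent to $\theta\le\psi$, verify $\pair(\theta)\preceq\nbd^{-\delta'}(H_-,H_+)$ for some $\delta'>0$, and invoke Lemma~\ref{le:support-construction}. You are in fact more explicit than the paper about the pair-ordering verification, which the paper leaves entirely to the reader.

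One remark is off, however. Your phrase about ``absorbing the $O(\eta)$ discrepancy caused by $g(t)-g(\hat t)$ into the $\delta$-buffer'' does not do what you want. The difference between your integrand $u(y+h,t)-u(\hat x,\hat t)-(g(t)-g(\hat t))$ and the function appearing in the hypothesis is the \emph{constant} $g(\hat t)$, not something of order $\eta$, and no spatial buffer can absorb a vertical shift. In fact the lemma as stated is only consistent when $g(\hat t)=0$: if $g(\hat t)<0$ the hypothesis already forces $\hat x\in\nbd^{-\delta}(H_+)\subset H_+$, contradicting $\hat x\notin\cl{H_+}$; if $g(\hat t)>0$ one can build counterexamples to the conclusion (take $u$ constant). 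The paper applies the lemma only with $g_u(\hat t)=g_v(\hat s)=0$, and under that normalization your $\theta$ coincides with the paper's and the verification goes through cleanly with $\delta'=\delta-\eta$. So drop the $O(\eta)$ remark and simply record the normalization $g(\hat t)=0$; the rest of your argument is correct and matches the paper.
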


\begin{proof}
Let us first set
\begin{align*}
\eta := \ov2 \min \set{\de, \dist(\hat x, \cl{H_+ \cup H_-})}.
\end{align*}
Then we introduce the function $\ta$ by
\begin{align*}
\ta(x) :=
\sup_{h \in \cl B_\eta(0)} \sup_{t \in [\hat t -\eta, \hat t + \eta]}
u(x + h, t) - u(\hat x, \hat t) - g(t).
\end{align*}
Clearly $\ta\in USC(\Tn)$.
Observe that, since $\cl B_\eta(\hat x) \in \Tn \setminus \cl{H_+ \cup H_-}$
by the definition of $\eta$,
the function
$\vp(x,t) = \psi(x) + g(t)$
is in general position of radius $\eta$ with respect to $u$ at the point
$(\hat x, \hat t)$
if and only if
\begin{align*}
\ta \leq \psi \qquad \text{on $\Tn$.}
\end{align*}
But such a function $\psi \in \dom(\partial E)$ is provided
by Lemma~\ref{le:support-construction}.
\end{proof}

\section{Comparison principle}
\label{sec:comparison-principle}

In this section we will establish the comparison principle
for viscosity solutions introduced in Definition~\ref{def:visc-sol}.
We will fix the spacetime cylinder $Q := \Tn \times (0,T)$.

\begin{theorem}[Comparison]
\label{th:comparison}
Let $u$ and $v$ be respectively a bounded viscosity subsolution
and a viscosity supersolution of \eqref{tvf}
on $Q$.
If $u \leq v$ at $t = 0$ then $u \leq v$ on $Q$.
\end{theorem}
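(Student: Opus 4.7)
I would argue by contradiction, following the doubling-of-variables plus shift-parameter strategy sketched in the introduction. After the usual parabolic reductions (subtracting $\gamma/(T-t) + \gamma t$ from $u$ for small $\gamma > 0$ and shrinking $T$ so that $u - v$ attains a strict interior maximum and the subsolution inequality becomes strict), for small $\e > 0$ and $\zeta$ in a small ball $B_r(0) \subset \Tn$ set
\begin{align*}
\Phi_\zeta(x,t,y,s) := u(x,t) - v(y,s) - \frac{\abs{x-y-\zeta}^2}{2\e} - S(t,s;\e),
\end{align*}
where $S$ contains the standard time-penalization $\abs{t-s}^2/(2\e)$. Upper semi-continuity and a standard penalization argument give maximizers $(x_\zeta, t_\zeta, y_\zeta, s_\zeta)$ with $t_\zeta, s_\zeta$ in the interior, and with $\abs{x_\zeta - y_\zeta - \zeta}^2/\e$ and $\abs{t_\zeta - s_\zeta}^2/\e$ bounded. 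The dichotomy is whether some $\zeta$ arbitrarily close to $0$ produces a maximizer with $x_\zeta \neq y_\zeta + \zeta$, or whether $x_\zeta = y_\zeta + \zeta$ for every $\zeta$ in an open neighborhood of $0$.

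In the \emph{nonflat} case, $p := (x_\zeta - y_\zeta - \zeta)/\e \neq 0$, so the conventional test in Definition~\ref{def:visc-sol}(ii) applies at both $u$ and $v$; combining this with the parabolic theorem of sums, the monotonicity of $k(p,\cdot)$ in its matrix argument (since $\nabla^2 W(p)$ is positive semidefinite by convexity of $W$), and the ellipticity \eqref{ellipticity} of $F$ gives the standard contradiction as $\e \to 0$. In the \emph{flat} case, the key observation is that $M(\zeta) := \Phi_\zeta(x_\zeta, t_\zeta, y_\zeta, s_\zeta)$ satisfies $\abs{M(\zeta) - M(\zeta')} \leq \abs{\zeta-\zeta'}^2/(2\e)$ (comparing $\Phi_\zeta$ with $\Phi_{\zeta'}$ at the respective maximizers). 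Inserting trial points with $t = t_0$ and $y$ or $x$ shifted by $\pm \zeta$ into $\Phi_\zeta \leq M(\zeta)$ extracts flatness estimates of the form
\begin{align*}
u(x_0 + h, t_0) \leq u(x_0, t_0) + C\abs{h}^2/\e, \qquad v(x_0 + h, s_0) \geq v(x_0, s_0) - C\abs{h}^2/\e,
\end{align*}
for $\abs{h} < r$, and analogously with small time perturbations. Hence $u(\cdot, t_0)$ and $v(\cdot, s_0)$ are almost flat in $B_r(x_0)$, with common contact value up to the $S$-penalization.

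From these flatness estimates I would build, for a small $\mu > 0$, the pairs $(G_-, G_+) := \pair(u(\cdot, t_0) - u(x_0,t_0) - \mu)$ and $(H_-, H_+) := \pair(v(\cdot, s_0) - v(x_0,s_0) + \mu)$, and use Proposition~\ref{pr:smooth-pair-approx} to sandwich them between strictly ordered smooth pairs whose intersection of facets contains $\cl B_\de(x_0)$ for some $\de > 0$. Lemma~\ref{le:faceted-construction} then delivers admissible faceted test functions $\vp_u(x,t) = \psi_G(x) + g_u(t)$ and $\vp_v(x,t) = \psi_H(x) + g_v(t)$ in general position for $u$ at $(x_0, t_0)$ and for $-v$ at $(y_0, s_0) = (x_0, s_0)$, where $g_u, g_v$ absorb the time-penalization. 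Definition~\ref{def:visc-sol}(i) applied to the subsolution $u$ and the supersolution $v$ yields two inequalities involving $\essinf_{B_\de(x_0)}[-\partial^0 E(\psi_G)]$ and $\esssup_{B_\de(x_0)}[-\partial^0 E(\psi_H)]$; Proposition~\ref{pr:monotonicity} supplies exactly the ordering of these nonlocal curvatures on the common facet needed to combine the two inequalities via the monotonicity of $F$ in \eqref{ellipticity}. Since $g_u'(t_0) - g_v'(s_0)$ equals the cancelling $S_t(t_0,s_0;\e) + S_s(t_0,s_0;\e) = 0$, the strict reduction-inequality from the first paragraph is contradicted.

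The main obstacle is the construction in the flat case: arranging $(G_-, G_+)$ and $(H_-, H_+)$ to be \emph{strictly} ordered with a positive Hausdorff gap, as required both by Proposition~\ref{pr:monotonicity} and by the general-position hypothesis of Lemma~\ref{le:faceted-construction}. The only slack available is the quadratic flatness produced by letting $\zeta$ range over the open ball $B_r(0)$, and this slack must be balanced against the $O(\abs{\zeta}^2/\e)$ error in the flatness bound and the smoothing parameter used to pass from general pairs to smooth ones. Verifying that the case distinction is exhaustive for a suitable sequence $\e \to 0$---equivalently, that the flat case can always be reduced to a situation covered by Lemma~\ref{le:faceted-construction}---is the genuinely delicate part, and is precisely where the nonlocal nature of $-\partial^0 E$ and the comparison built into Proposition~\ref{pr:monotonicity} are indispensable.
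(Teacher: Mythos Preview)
Your overall architecture matches the paper: doubling with a shift parameter $\zeta$, a flat/nonflat dichotomy, the conventional test when $p\neq 0$, and the faceted test together with Proposition~\ref{pr:monotonicity} when $p=0$. The nonflat case is fine. The flat case, however, has a genuine gap.

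The estimate $|M(\zeta)-M(\zeta')|\leq|\zeta-\zeta'|^2/(2\e)$ that you use as the engine of flatness does \emph{not} use the hypothesis $x_\zeta-y_\zeta-\zeta=0$; it follows for free by inserting the maximizer of $\Phi_\zeta$ into $\Phi_{\zeta'}$. Consequently your ``flatness estimates'' $u(x_0+h,t_0)\leq u(x_0,t_0)+C|h|^2/\e$ are nothing more than the quadratic touching already encoded in $\Phi_0\leq M(0)$. They do not produce an open region on which $u(\cdot,t)-\al-g_u(t)\leq 0$, which is what Lemma~\ref{le:faceted-construction} needs to place a faceted test function in general position. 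The paper closes this gap with the \emph{constancy lemma} (Lemma~\ref{le:constancy}): from $\nabla\phi(r_\zeta-\zeta)=0$ for all $\zeta$ in a ball one deduces that $\ell(\zeta)$ is \emph{exactly constant}, and hence (Lemma~\ref{le:orderInDoubling}) that $u(x,t)-v(y,s)-S(t,s)\leq u(\hat x,\hat t)-v(\hat x,\hat s)-S(\hat t,\hat s)$ for \emph{all} $|x-y|\leq\lambda$, a true flat bound with no quadratic error.

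This flat bound is also what makes the ordered pairs work. Your pairs $(G_-,G_+)=\pair(u(\cdot,t_0)-u(x_0,t_0)-\mu)$ and $(H_-,H_+)=\pair(v(\cdot,s_0)-v(x_0,s_0)+\mu)$ are built separately from $u$ and $v$, and there is no mechanism forcing $\nbd^\de(G_-,G_+)\preceq -(H_-,H_+)$. In the paper the pairs are \emph{coupled}: with $U=\{u(\cdot,\hat t)\geq\al\}$ and $V=\{v(\cdot,\hat s)\leq\be\}$ one takes $S_u=(U^c,\,U\setminus\nbd^{\lambda-3r}(V))$ and $S_v=(V^c,\,V\setminus\nbd^{\lambda-3r}(U))$, so that the ``$+$'' side of the pair for $u$ already excludes a neighborhood of $V$ (and symmetrically). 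Lemma~\ref{le:orderInDoubling} is exactly what justifies this excision, and Lemma~\ref{le:approx-facet-properties} then delivers the strict ordering \eqref{strict-ord}, the inclusion \eqref{in-facet} of $\hat x$ in the common facet, and the general-position estimate \eqref{strict-u-v}. Without the constancy lemma and this coupled construction, you cannot simultaneously obtain strict ordering for Proposition~\ref{pr:monotonicity} and general position for Lemma~\ref{le:faceted-construction}; the balancing you describe in your last paragraph is precisely the obstruction, and it is not resolved by quadratic touching alone.
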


We shall give a slightly different exposition
of the proof of the theorem than the one that appears in \cite{GGP13},
but the method is identical.

We perform a variation of the doubling-of-variables 
procedure:
we define
\begin{align*}
w(x,t,y,s) := u(x,t) - v(y,s),
\end{align*}
and, for a positive constant $\e > 0$
and point $\zeta \in \Tn$,
the functions
\begin{align*}
\Psi_\zeta(x,t,y,s; \e) &:= 
    \frac{\abs{x - y -\zeta}^2}{2\e} + S(t,s; \e),\\
S(t,s; \e) &:= 
    \frac{\abs{t -s}^2}{2\e} + \frac{\e}{T - t}
    + \frac{\e}{T - s},
\end{align*}
where $\abs{x - y - \zeta}$ was defined
in \eqref{torus-metric}.

We analyze the maxima of functions
\begin{align*}
\Phi_\zeta(x,t,y,s;\e)
    := w(x,t,y,s) - \Psi_\zeta(x,t,y,s;\e)
    \qquad \text{for $\zeta \in \Tn$.}
\end{align*}
Following \cite{GG98ARMA}, we define the maximum of $\Phi_\zeta$
\begin{align*}
\ell(\zeta;\e) = \max_{\cl Q \times \cl Q} \Phi_\zeta(\cdot; \e)
\end{align*}
and
the sets of points of maximum of $\Phi_\zeta$,
over $\cl Q \times \cl Q$
\begin{align*}
\mathcal A(\zeta; \e) := \argmax_{\cl Q \times \cl Q} \Phi_\zeta(\cdot;\e)
    := \set{(x,t,y,s) \in \cl Q \times \cl Q :
    \Phi_\zeta(x,t,y,s;\e) = \ell(\zeta;\e)}.
\end{align*}

Suppose that the comparison principle, Theorem~\ref{th:comparison},
does not hold,
that is, suppose that
\begin{align*}
m_0 := \sup_Q \bra{u - v} > 0.
\end{align*}
We have the following proposition.

\begin{proposition}
\label{pr:maxima-interior}
There exists $\e_0 > 0$ such that
for all $\e \in (0, \e_0)$
we have
\begin{align*}
\mathcal A(\zeta;\e)\subset Q \times Q
\qquad \text{for all $\abs{\zeta} \leq \kappa(\e)$},
\end{align*}
where $\kappa(\e) := \ov8 (m_0 \e)^{\ov2}$.
Moreover,
\begin{align*}
\abs{x - y - \zeta} \leq (M\e)^{\ov2},\qquad
\abs{t - s} \leq (M \e)^{\ov2}, \qquad
\text{for all $(x,t,y,s) \in \mathcal A(\zeta; \e)$,}
\end{align*}
where $M := \sup_{\cl Q \times \cl Q} w < \infty$.
\end{proposition}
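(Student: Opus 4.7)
The plan follows the standard strategy traceable to \cite{GG98ARMA}: use the time penalty $\e/(T-t)+\e/(T-s)$ to push maximizers away from $\set{t=T}\cup\set{s=T}$ automatically, and use the initial ordering together with the upper/lower semi-continuity of $u$ and $v$ to push them away from $\set{t=0}\cup\set{s=0}$. The penalty bounds on $|x-y-\zeta|$ and $|t-s|$ fall out of comparing the value of $\Phi_\zeta$ at any maximizer with a uniform lower bound for $\ell(\zeta;\e)$.

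First, $\Phi_\zeta$ is upper semi-continuous on $\cl Q\times\cl Q$ (as $u$ is USC, $v$ is LSC, and $\Psi_\zeta$ is continuous away from $t=T,s=T$) and tends to $-\infty$ when either time variable approaches $T$. Hence $\Phi_\zeta$ attains its supremum on a compact subset of $\Tn\times[0,T)\times\Tn\times[0,T)$, which already guarantees that $\mathcal{A}(\zeta;\e)$ stays away from $\set{t=T}\cup\set{s=T}$ for every $\e>0$ and every $\zeta$.

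Next, since $m_0>0$, pick any $(x_0,t_0)\in Q$ with $u(x_0,t_0)-v(x_0,t_0)\geq \tfrac{7}{8}m_0$. Testing $\Phi_\zeta$ at the diagonal point $(x_0,t_0,x_0,t_0)$ gives
\begin{align*}
\ell(\zeta;\e)\;\geq\;\tfrac{7}{8}m_0-\frac{|\zeta|^2}{2\e}-\frac{2\e}{T-t_0}.
\end{align*}
For $|\zeta|\leq\kappa(\e)=\ov8(m_0\e)^{\ov2}$ the middle term is at most $m_0/128$, and choosing $\e_0$ small (depending only on $m_0$ and $t_0$) makes the last term at most $m_0/8$, giving $\ell(\zeta;\e)\geq \tfrac{3}{4}m_0$ uniformly in $\e\in(0,\e_0)$ and $|\zeta|\leq\kappa(\e)$. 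At any maximizer $(x,t,y,s)\in\mathcal{A}(\zeta;\e)$ one then has $\Psi_\zeta(x,t,y,s;\e)=w(x,t,y,s)-\ell(\zeta;\e)\leq M$, and since $\Psi_\zeta$ is a sum of nonnegative terms, both $|x-y-\zeta|^2/(2\e)\leq M$ and $|t-s|^2/(2\e)\leq M$, yielding the asserted $O(\sqrt{\e})$ bounds (the harmless factor $\sqrt 2$ is absorbed by working with a slightly enlarged $M$ if necessary).

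Finally, the main technical step is the exclusion of $t=0$ and $s=0$. Suppose, toward a contradiction, that there exist $\e_n\to 0$, $|\zeta_n|\leq\kappa(\e_n)$, and maximizers $(x_n,0,y_n,s_n)\in\mathcal{A}(\zeta_n;\e_n)$. The penalty bounds above force $s_n\to 0$ and $|x_n-y_n|\to 0$, so along a subsequence $x_n,y_n\to x_*\in\Tn$. Using $\Psi_\zeta\geq 0$ and the uniform lower bound,
\begin{align*}
u(x_n,0)-v(y_n,s_n)\;\geq\;\ell(\zeta_n;\e_n)\;\geq\;\tfrac{3}{4}m_0.
\end{align*}
Taking $\limsup$ as $n\to\infty$ and invoking USC of $u$ and LSC of $v$ at $(x_*,0)$ gives $u(x_*,0)-v(x_*,0)\geq \tfrac{3}{4}m_0>0$, contradicting the initial ordering $u(\cdot,0)\leq v(\cdot,0)$ on $\Tn$. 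The case $s=0$ is symmetric. Combined with the first step, this forces $\mathcal{A}(\zeta;\e)\subset Q\times Q$. The main obstacle is precisely this last $\limsup$ argument: since we have no moduli of continuity for $u$ or $v$, the boundary exclusion at the initial time must proceed through sequential compactness rather than pointwise estimates, but the asymmetric USC/LSC inequalities align correctly in the limit because each is used in the direction that tightens the bound.
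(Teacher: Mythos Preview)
Your argument is correct and follows exactly the standard route the paper defers to via the citation of \cite[Proposition~7.1, Remark~7.2]{GG98ARMA}: the time penalty blocks $t=T$ and $s=T$, a diagonal test point yields a positive lower bound $\ell(\zeta;\e)\geq c\,m_0$ uniformly in small $\e$ and $|\zeta|\leq\kappa(\e)$, this immediately gives the $O(\sqrt{\e})$ control on $|x-y-\zeta|$ and $|t-s|$, and a sequential USC/LSC argument excludes the initial times by contradiction with $u(\cdot,0)\leq v(\cdot,0)$. The extra factor $\sqrt2$ you flag is indeed harmless for the subsequent use of the bound (the paper only needs $(M\e)^{1/2}<\tfrac14$ to interpret $x-y-\zeta$ as a vector in $(-\tfrac14,\tfrac14)^n$), so your remark there is appropriate.
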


\begin{proof}
See \cite[Proposition 7.1, Remark 7.2]{GG98ARMA}.
\end{proof}

In the view of Proposition~\ref{pr:maxima-interior},
we fix one $\e \in (0,\e_0)$
such that $(M \e)^{\ov2} < \ov4$
for the rest of the proof
and drop the dependence of the formulas below on $\e$
for the sake of clarity.
Moreover, we introduce
\begin{align*}
\la := \frac{\kappa(\e)}2.
\end{align*}

Again, following \cite{GG98ARMA},
we define
the set of gradients
\begin{align*}
\mathcal B(\zeta) :=
    \set{\frac{x - y -\zeta}{\e} : (x,t,y,s) \in \mathcal A(\zeta)}
    \subset \Rn,
\end{align*}
where $x - y -\zeta$ is interpreted as a vector in
$(-\ov4,\ov4)^n \subset \Rn$.

The situation can be divided into two cases:
\begin{enumerate}[{Case }I.]
\item
$\mathcal B(\zeta) = \set0$ for all $\abs{\zeta} \leq \kappa(\e)$.
\item
There exists $\zeta \in \Tn$
and $p \in \mathcal B(\zeta)$
such that $\abs{\zeta} \leq \kappa(\e)$
and $p \neq 0$.
\end{enumerate}

\subsection{Case I}

This is the less standard case
since it is necessary to construct
admissible faceted test functions for the faceted test
in the definition of viscosity solutions.
We have $\mathcal B(\zeta) = \set0$ for all $\abs{\zeta} \leq \kappa(\e)$.
In this case,
we apply 
the constancy lemma that was presented in \cite[Lemma 7.5]{GG98ARMA}.

\begin{lemma}[Constancy lemma]
\label{le:constancy}
Let $K$ be a compact set in $\R^N$ for some $N > 1$
and let $h$ be a real-valued upper semi-continuous function on $K$.
Let $\phi$ be a $C^2$ function on $\R^d$ with $1 \leq d < N$.
Let $G$ be a bounded domain in $\R^d$.
For each $\zeta \in G$ assume that
there is a maximizer
$(r_\zeta, \rho_\zeta)\in K$ of
\begin{align*}
    H_\zeta(r, \rho) = h(r, \rho) - \phi(r - \zeta)
\end{align*}
over $K$ such that $\nabla \phi(r_\zeta - \zeta) = 0$.
Then,
\begin{align*}
    h_\phi(\zeta) = \sup\set{H_\zeta(r,\rho): (r, \rho)\in K}
\end{align*}
is constant on $G$.
\end{lemma}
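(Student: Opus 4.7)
The plan is to establish a strong quadratic estimate of the form $\abs{h_\phi(\zeta_0) - h_\phi(\zeta_1)} \leq C \abs{\zeta_0 - \zeta_1}^2$ valid for all $\zeta_0, \zeta_1 \in G$ with a single constant $C$, and then to deduce that $h_\phi$ is constant on $G$ by a chaining argument along polygonal paths, exploiting the fact that a domain is connected.

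To derive the quadratic estimate, fix $\zeta \in G$ and invoke the hypothesis to obtain a maximizer $(r_\zeta, \rho_\zeta) \in K$ satisfying $h_\phi(\zeta) = h(r_\zeta, \rho_\zeta) - \phi(r_\zeta - \zeta)$ together with $\nabla \phi(r_\zeta - \zeta) = 0$. Using this same point as a test candidate in the supremum defining $h_\phi(\zeta_0)$ for any other $\zeta_0 \in G$ gives
\begin{align*}
h_\phi(\zeta_0) - h_\phi(\zeta) \geq \phi(r_\zeta - \zeta) - \phi(r_\zeta - \zeta_0).
\end{align*}
A second-order Taylor expansion of $\phi$ around $r_\zeta - \zeta$ with increment $\zeta - \zeta_0$ kills the linear term by the critical-point condition and yields $\phi(r_\zeta - \zeta_0) - \phi(r_\zeta - \zeta) = O(\abs{\zeta - \zeta_0}^2)$. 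Swapping the roles of $\zeta$ and $\zeta_0$, which is legal because the hypothesis supplies a stationary maximizer at \emph{every} point of $G$, produces the matching reverse bound, completing the two-sided quadratic inequality.

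Two refinements finish the proof. First, the Taylor remainder must be controlled by a constant independent of $\zeta$; since $K \subset \R^N$ is compact and $G \subset \R^d$ is bounded, the set $\set{r - \zeta : (r,\rho) \in K,\ \zeta \in \cl G}$ is contained in a fixed compact subset of $\R^d$ on which the Hessian of $\phi$ is bounded, so a single constant $C$ works throughout. Second, for any $\zeta_0, \zeta_1 \in G$, join them by a polygonal path of length $L$ inside $G$, subdivide it into $n$ equal pieces $\zeta_0 = \xi_0, \ldots, \xi_n = \zeta_1$, and sum the estimate:
\begin{align*}
\abs{h_\phi(\zeta_0) - h_\phi(\zeta_1)} \leq \sum_{i=1}^n C \abs{\xi_i - \xi_{i-1}}^2 \leq C n (L/n)^2 = C L^2/n,
\end{align*}
which tends to $0$ as $n \to \infty$. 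The only delicate point in the whole argument is securing the uniformity of the Taylor remainder constant across $G$, which is handled by the compactness of $K$ combined with the boundedness of $G$; everything else is a two-line Taylor expansion and the standard connectedness-via-chaining argument.
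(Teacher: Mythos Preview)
Your proof is correct. The paper does not supply its own proof of this lemma; it simply cites \cite{GG98ARMA}, Lemma~7.5, where the original argument appears. Your approach---a second-order Taylor expansion at the stationary maximizer to obtain the two-sided bound $\abs{h_\phi(\zeta_0) - h_\phi(\zeta_1)} \leq C\abs{\zeta_0 - \zeta_1}^2$, followed by a chaining argument along polygonal paths in the connected open set $G$---is precisely the standard proof and matches the cited reference. One cosmetic remark: the Taylor remainder is evaluated at points $r_\zeta - \bigl((1-t)\zeta + t\zeta_0\bigr)$, so to bound the Hessian uniformly you should let the shift range over the convex hull of $\cl G$ rather than $\cl G$ itself; since $G$ is bounded this is immaterial.
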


We apply Lemma~\ref{le:constancy}
with the following parameters:
\begin{gather*}
N = 2n + 2, \quad
d= n, \quad
\rho = (y, t, s) \in \Tn \times \R \times \R,\\
K = 
    \set{(x - y, y, t, s):
       (x,y) \in \Tn \times \Tn,
       (t,s) \in [0,T] \times [0,T]},\\
G = B_{2\la}(0),\\
h(r,\rho) = w(r + y, t, y, s) - S(t,s),
\quad
\phi(r) = \frac{\abs{r}^2}{2\e}.
\end{gather*}
$K$ can be treated as a compact subset of $\Rn$
in a straightforward way.
We infer that $\ell(\zeta) = h_\phi(\zeta)$ is constant
for $\abs{\zeta} \leq \la$.

Therefore we have also an ordering
analogous to \cite[Corollary 7.9]{GG98ARMA},
which yields the crucial estimate.

\begin{lemma}
\label{le:orderInDoubling}
Let $(\hat x, \hat t, \hat x, \hat s) \in \mathcal A(0)$. Then
\begin{align*}
u(x,t) - v(y,s) - S(t,s)
    \leq u(\hat x, \hat t) - v(\hat x, \hat s) - S(\hat t, \hat s)
\end{align*}
for all $s,t \in (0,T)$
and $x,y \in \Tn$ such that $\abs{x -y} \leq \la := \kappa(\e)/2$.
\end{lemma}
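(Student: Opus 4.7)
The plan is to extract this lemma directly from the constancy of $\ell(\zeta)$, which has already been established as a consequence of Lemma~\ref{le:constancy} under the Case~I hypothesis $\mathcal B(\zeta) = \set0$. The underlying idea is that $\ell(\zeta) = \max \Phi_\zeta$ serves as a uniform upper bound on the penalized difference $u(x,t) - v(y,s) - S(t,s)$ as soon as one absorbs the spatial quadratic term $\abs{x-y-\zeta}^2/(2\e)$ via a clever choice of $\zeta$.

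First I would fix arbitrary $x, y \in \Tn$ with $\abs{x - y} \leq \la$ and arbitrary $s, t \in (0,T)$, and then \emph{choose} the doubling parameter $\zeta := x - y$. Since $\la < 2\la$, this $\zeta$ lies in the open set $G = B_{2\la}(0)$ on which the constancy lemma yields $\ell(\zeta) = \ell(0)$. With this particular choice the spatial penalization vanishes, so
\begin{align*}
\Phi_\zeta(x,t,y,s) = u(x,t) - v(y,s) - S(t,s).
\end{align*}
Evaluating $\ell(0)$ at the prescribed maximizer $(\hat x, \hat t, \hat x, \hat s) \in \mathcal A(0)$, the spatial term again vanishes because $\hat x - \hat x - 0 = 0$, giving
\begin{align*}
\ell(0) = u(\hat x, \hat t) - v(\hat x, \hat s) - S(\hat t, \hat s).
\end{align*}

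The conclusion then follows by chaining $\Phi_\zeta(x,t,y,s) \leq \ell(\zeta) = \ell(0)$. The only real obstacle is bookkeeping: one must verify that the Case~I identity $x_\zeta - y_\zeta = \zeta$ at every maximizer corresponds exactly to the hypothesis $\nabla\phi(r_\zeta - \zeta) = 0$ of Lemma~\ref{le:constancy} under the identification $r = x - y$, $\phi(r) = \abs{r}^2/(2\e)$, and that for each $\zeta \in G$ such a maximizer indeed exists in $Q \times Q$, which is guaranteed by Proposition~\ref{pr:maxima-interior} since $2\la = \kappa(\e)$. Once these identifications are made, the derivation is essentially algebraic.
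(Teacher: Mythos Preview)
Your argument is correct and is exactly the intended one: the paper does not spell out a proof of this lemma but sets up the constancy lemma precisely so that, once $\ell(\zeta)$ is constant on $G = B_{2\la}(0)$, choosing $\zeta = x - y$ with $\abs{x-y} \leq \la$ kills the spatial penalty and reduces the inequality to $\Phi_\zeta(x,t,y,s) \leq \ell(\zeta) = \ell(0)$. Your bookkeeping on the hypotheses of Lemma~\ref{le:constancy} and Proposition~\ref{pr:maxima-interior} is also accurate.
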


From now on, we fix $(\hat x, \hat t, \hat x, \hat s) \in \mathcal A(0)$
and set
\begin{align*}
\alpha := u(\hat x, \hat t), \qquad \beta := v(\hat x, \hat s).
\end{align*}
As in \cite{GGP13},
we introduce the closed sets
\begin{align*}
U := \set{x : u(x, \hat t) \geq \al},\qquad
V := \set{x : v(x, \hat s) \leq \be},
\end{align*}
which will be used to generate strictly ordered
smooth facets.
To accomplish that,
let us now for simplicity set $r := \la/10$.
Furthermore, define the closed sets
\begin{align*}
X := \cl{(\nbd^r(U))^c},\qquad
Y := \cl{(\nbd^r(V))^c}.
\end{align*}
Since $\dist(U, X) = \dist(V, Y) = r$,
the definition of $U$ and $V$ and the semi-continuity
of $u$ and $v$ imply that there exists $\de > 0$
such that
\begin{subequations}
\label{bound-XY}
\begin{align}
u(x,t) - \al  + S(\hat t, \hat s) - S(t,\hat s) &< 0,&
x&\in X, t \in [\hat  t - \de, \hat t + \de],\\
v(x,t) - \be + S(\hat t, t) - S(\hat t, \hat s) &> 0,&
x&\in Y, t \in [\hat  s - \de, \hat s + \de].
\end{align}
\end{subequations}
Moreover, the estimate from Lemma~\ref{le:orderInDoubling}
and the definition of $U$ and $V$
implies that
\begin{subequations}
\label{bound-UV}
\begin{align}
u(x,t) - \al  + S(\hat t, \hat s) - S(t,\hat s) &\leq 0,&
x&\in \nbd^\la(V), t \in (0,T),\\
v(x,t) - \be + S(\hat t, t) - S(\hat t, \hat s) &\geq 0,&
x&\in \nbd^\la(U), t \in (0,T).
\end{align}
\end{subequations}

This suggests introducing
\begin{align*}
g_u(t) := S(t, \hat s) - S(\hat t, \hat s),\qquad
g_v(t) := S(\hat t, \hat s) - S(\hat t, t)
\end{align*}
and the pairs
\begin{align*}
P_u(t) &:= \pair(u(\cdot, t) - \al - g_u(t)),\\
P_v(t) &:= \pair(v(\cdot, t) - \be - g_v(t)).
\end{align*}
If we denote by $-P_v(t)$ the reversed pair of $P_v(t)$,
we infer from \eqref{bound-XY} and \eqref{bound-UV} in particular that
\begin{subequations}
\label{def-Ru-Rv}
\begin{align}
P_u(t) &\preceq ((\nbd^r(U))^c, \nbd^r(U) \setminus \nbd^\la(V)) =: R_u,\\
-P_v(t) &\preceq ((\nbd^r(V))^c, \nbd^r(V) \setminus \nbd^\la(U)) =: R_v.
\end{align}
\end{subequations}

We define the pairs
\begin{align*}
S_u := (U^c, U \setminus \nbd^{\la-3r}(V)),
\qquad
S_v:= (V^c, V \setminus\nbd^{\la-3r}(U)).
\end{align*}
Since $S_u, S_v \in \mathcal P$,
Proposition~\ref{pr:smooth-pair-approx}
implies that there exist smooth pairs $(U_-, U_+)$ and
$(V_-, V_+)$
such that
\begin{subequations}
\label{facet-est}
\begin{align}
\label{Pu-facet}
\nbd^{2r}(S_u) &\preceq
(U_-, U_+) \preceq
\nbd^{3r}(S_u),\\
\label{Pv-facet}
\nbd^{2r}(S_v) &\preceq
(V_-, V_+) \preceq
\nbd^{3r}(S_v).
\end{align}
\end{subequations}

Before proving Lemma~\ref{le:approx-facet-properties} below,
we give the following trivial estimate.

\begin{lemma}
\label{le:split-estimate}
Suppose that $G, H \subset \Tn$.
Then
\begin{align*}
\nbd^\rho(G) \setminus \nbd^\rho(H)
\subset \nbd^\rho(G\setminus H)\qquad\text{for any $\rho > 0$.}
\end{align*}
\end{lemma}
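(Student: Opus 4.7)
The plan is to argue directly from the Minkowski sum definition of $\nbd^\rho$ for $\rho > 0$, namely $\nbd^\rho(A) = A + \cl B_\rho(0)$. Take an arbitrary point $x \in \nbd^\rho(G) \setminus \nbd^\rho(H)$. By the definition applied to $G$, we can write $x = g + b$ for some $g \in G$ and some $b \in \cl B_\rho(0)$. The goal is to show that in fact $g \in G \setminus H$, since then $x = g + b \in (G\setminus H) + \cl B_\rho(0) = \nbd^\rho(G \setminus H)$, which is exactly what we need.

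The key step is ruling out $g \in H$, which is a single contradiction argument: if $g$ were in $H$, then the same representation $x = g + b$ would exhibit $x$ as an element of $H + \cl B_\rho(0) = \nbd^\rho(H)$, contradicting the hypothesis $x \notin \nbd^\rho(H)$. Hence $g \notin H$, so $g \in G \setminus H$, and the inclusion follows by rewinding the definition. There is essentially no obstacle here; the only point requiring a small amount of care is that on $\Tn$ the Minkowski sum is taken in the quotient sense, but this is built into the definition of $\nbd^\rho$ given earlier and does not affect the argument. Note also that the hypothesis $\rho > 0$ is used only to invoke the Minkowski sum form of the definition; the inclusion fails in general for $\rho < 0$ (e.g., erosion need not distribute over set difference), which is why the lemma is stated only for positive $\rho$.
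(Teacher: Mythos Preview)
Your proof is correct and is essentially the same as the paper's: the paper takes $x \in \nbd^\rho(G) \setminus \nbd^\rho(H)$, picks $y \in G$ with $x \in \cl B_\rho(y)$, observes that $y \notin H$ (for otherwise $x \in \nbd^\rho(H)$), and concludes $x \in \nbd^\rho(G\setminus H)$. The only difference is notational---you phrase it via the Minkowski-sum representation $x = g + b$ rather than $x \in \cl B_\rho(y)$---and you spell out the contradiction step that the paper leaves implicit.
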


\begin{proof}
Suppose that $x \in \nbd^\rho(G) \setminus \nbd^\rho(H)$.
Then there exists $y \in G$ such that $x \in \cl B_\rho(y)$.
In particular, $y \notin H$
and therefore $y \in G \setminus H$,
which implies that $x \in \nbd^\rho(G\setminus H)$.
\end{proof}

\begin{lemma}
\label{le:approx-facet-properties}
The pair  $(U_-, U_+)$ and
the pair
$(V_-, V_+)$
have the following properties:
\begin{enumerate}
\item The pairs are strictly ordered in the sense
\begin{align}
\label{strict-ord}
\nbd^{r}(U_-, U_+) \preceq (V_+, V_-) = -(V_-, V_+).
\end{align}
\item The contact point $\hat x$ lies 
in the interior of the intersection of the facets, that is,
\begin{align}
\label{in-facet}
\cl B_r(\hat x) \subset U_-^c \cap U_+^c \cap V_-^c \cap V_+^c.
\end{align}

\item The pairs are in general position with respect to $R_u$ and $R_v$,
i.e.,
\begin{align}
\label{strict-u-v}
\nbd^r(R_u)\preceq (U_-, U_+), \qquad
\nbd^r(R_v) \preceq (V_-, V_+).
\end{align}
\end{enumerate}
\end{lemma}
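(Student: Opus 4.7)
The plan is to unpack each of the three claims into inclusions of $\nbd^\rho$-neighborhoods, then verify them using the structural hypothesis $r=\lambda/10$ and Proposition~\ref{pr:nbd-properties}, together with the facts $\hat x\in U\cap V$ and the approximation bounds \eqref{Pu-facet}--\eqref{Pv-facet}. I will repeatedly use that \eqref{Pu-facet} gives
\begin{align*}
\nbd^{-3r}(U^c)\subset U_-\subset \nbd^{-2r}(U^c),\qquad
\nbd^{2r}(U\setminus\nbd^{\lambda-3r}(V))\subset U_+\subset \nbd^{3r}(U\setminus\nbd^{\lambda-3r}(V)),
\end{align*}
and symmetrically for $V_\pm$ from \eqref{Pv-facet}.

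\textbf{Part (i) (strict ordering).} To prove \eqref{strict-ord} I need $\nbd^r(U_+)\subset V_-$ and $V_+\subset\nbd^{-r}(U_-)$. By the above bounds it suffices to show
\begin{align*}
\nbd^r\bigl(\nbd^{3r}(U\setminus\nbd^{\lambda-3r}(V))\bigr)\subset \nbd^{-3r}(V^c),
\end{align*}
which, via Proposition~\ref{pr:nbd-properties}(v)--(vi) (and using $3r\geq 0$ for equality), reduces to $\nbd^{7r}(U\setminus\nbd^{\lambda-3r}(V))\subset V^c$. Any $x$ in the left-hand side has a point $z\in U\setminus\nbd^{\lambda-3r}(V)$ within distance $7r$. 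Since $\dist(z,V)>\lambda-3r$, every $y\in V$ satisfies $\dist(x,y)>\lambda-3r-7r=\lambda-10r=0$; as $V$ is closed this yields $x\in V^c$. The second inclusion is symmetric after using Proposition~\ref{pr:nbd-properties}(ii) to rewrite $\nbd^{-r}(\nbd^{-3r}(U^c))=\nbd^{-4r}(U^c)$.

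\textbf{Part (ii) (contact point in the facet).} For $\cl B_r(\hat x)\subset U_+^c$ I argue that $U_+\subset\nbd^{3r}(U\setminus\nbd^{\lambda-3r}(V))$, so it suffices that $\cl B_{4r}(\hat x)\cap(U\setminus\nbd^{\lambda-3r}(V))=\emptyset$. Since $\hat x\in V$ and $4r\leq\lambda-3r$ (because $\lambda=10r$), $\cl B_{4r}(\hat x)\subset\nbd^{\lambda-3r}(V)$, giving the emptiness. For $\cl B_r(\hat x)\subset U_-^c$ I use $U_-\subset\nbd^{-2r}(U^c)=\{x:\dist(x,U)>2r\}$; since $\hat x\in U$, any $x\in\cl B_r(\hat x)$ satisfies $\dist(x,U)\leq r<2r$. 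The inclusions for $V_\pm^c$ follow by swapping the roles of $(u,\hat t,U)$ and $(v,\hat s,V)$.

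\textbf{Part (iii) (general position).} I must show $\nbd^r(R_u)\preceq(U_-,U_+)$, i.e.\ $\nbd^r(\nbd^r(U)\setminus\nbd^\lambda(V))\subset U_+$ and $U_-\subset\nbd^{-r}((\nbd^r(U))^c)$. The second is immediate from $\nbd^{-r}((\nbd^r(U))^c)=\nbd^{-2r}(U^c)$ (a complement/composition computation as in (i)) and $U_-\subset\nbd^{-2r}(U^c)$. For the first, it suffices that
\begin{align*}
\nbd^r(\nbd^r(U)\setminus\nbd^\lambda(V))\subset\nbd^{2r}(U\setminus\nbd^{\lambda-3r}(V)).
\end{align*}
Given $x$ in the left-hand side, pick $z\in\nbd^r(U)\setminus\nbd^\lambda(V)$ with $\dist(x,z)\leq r$, then $y\in U$ with $\dist(z,y)\leq r$. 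Since $\dist(z,V)>\lambda$, the triangle inequality gives $\dist(y,V)>\lambda-r>\lambda-3r$, so $y\in U\setminus\nbd^{\lambda-3r}(V)$, while $\dist(x,y)\leq 2r$. The corresponding claim for $(V_-,V_+)$ is symmetric. The main obstacle here is keeping track of signs and compositions of $\nbd^\rho$ for $\rho$ of mixed sign; everything is designed so that the constants arithmetically collapse exactly because $r=\lambda/10$, so the plan is essentially bookkeeping with Proposition~\ref{pr:nbd-properties} once the right intermediate inclusions are identified.
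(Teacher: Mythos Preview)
Your proof is correct and follows essentially the same route as the paper's: both arguments unwind the definitions of $(U_\pm,V_\pm)$ via the bounds \eqref{Pu-facet}--\eqref{Pv-facet} and then reduce everything to elementary inclusions between $\nbd^\rho$-neighborhoods, using $r=\lambda/10$ to close the arithmetic. The only notable difference is in part~(iii): the paper packages the key step $\nbd^{2r}(U)\setminus\nbd^{\lambda-r}(V)\subset\nbd^{2r}(U\setminus\nbd^{\lambda-3r}(V))$ as an application of the separate Lemma~\ref{le:split-estimate}, whereas you prove the required inclusion directly by picking points $z$ and $y$ and tracking distances --- this is slightly more hands-on but avoids stating an auxiliary lemma.
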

\begin{proof}
Let us recall the properties of $\nbd^\rho$
in Proposition~\ref{pr:nbd-properties}.
To show (a),
first estimate using \eqref{Pu-facet}
and \eqref{compl-nbd}
\begin{align}
\label{U-est}
\nbd^r(U_+) \subset
\nbd^r (\nbd^{3r}(U \setminus \nbd^{\la-3r}(V)))
\subset \nbd^{3r}(\nbd^{3r-\la}(V^c)) \subset \nbd^{6r - \la}(V^c).
\end{align}
On the other hand, since $6r - \la = -4r < -3r$,
we have from \eqref{Pv-facet}
\begin{align*}
\nbd^{6r - \la}(V^c) \subset \nbd^{-3r}(V^c) \subset V_-.
\end{align*}
Combining these two estimates we get $\nbd^r(U_+) \subset V_-$.
Symmetric estimates show that $\nbd^r(V_+) \subset U_-$,
i.e., $V_+ \subset \nbd^{-r}(U_-)$.
Consequently, \eqref{strict-ord} follows.

For (b), we first realize that by definition
$\hat x \in U \cap V$
and therefore $\cl B_r(\hat x) \subset \nbd^r(U) \cap \nbd^r(V)$.
But \eqref{Pu-facet}
yields
\begin{align*}
U_- \subset \nbd^{-2r} (U^c) = (\nbd^{2r}(U))^c.
\end{align*}
Similarly, \eqref{U-est} implies
\begin{align*}
U_+ \subset \nbd^{6r - \la}(V^c) \subset \nbd^{-2r}(V^c) =
(\nbd^{2r}(V))^c.
\end{align*}
Symmetric estimates hold for $V_\pm$ and
(b) follows.

To show (c), we estimate using Lemma~\ref{le:split-estimate}
\begin{align*}
\nbd^r(R_u) &\preceq
(\nbd^{-2r}(U^c), \nbd^{2r}(U) \setminus \nbd^{\la -r}(V))
\\&\preceq
(\nbd^{-2r}(U^c), \nbd^{2r}(U  \setminus \nbd^{\la -3r}(V)))
= \nbd^{2r}(S_u) \preceq (U_-, U_+).
\end{align*}
The statement for $(V_-, V_+)$ is analogous.
\end{proof}

We can finally finish the construction
for Case II
using the estimates in Lemma~\ref{le:approx-facet-properties}.
Indeed, the estimate \eqref{strict-u-v},
recalling the definitions of $R_u$ and $R_v$ in \eqref{def-Ru-Rv},
is all that is necessary to apply Lemma~\ref{le:faceted-construction},
with an obvious modification for $v$.
Then we have $\vp_u(x,t) = \psi_u(x) + g_u(t)$
(resp. $\vp_v(x,t) = \psi_v(x) + g_v(t)$),
an admissible faceted test function
at $(\hat x, \hat t)$  (resp. $(\hat x, \hat s)$)
with facet $(U_-, U_+)$ (resp. $(V_+, V_-) = -(V_-, V_+)$).
Moreover,
$\vp_u$ is in general position with respect to $u$
at $(\hat x, \hat t)$
and $-\vp_v$ is in general position with respect to $-v$
at $(\hat x, \hat s)$, both with some radius $\eta > 0$.
Since the facets are strictly ordered \eqref{strict-ord},
the monotonicity Proposition~\ref{pr:monotonicity}
and \eqref{in-facet}
yield
\begin{align}
\label{ess-order}
\essinf_{\cl B_\eta(\hat x)} \bra{-\partial^0 E(\psi_u)}
\leq \esssup_{\cl B_\eta(\hat x)} \bra{-\partial^0 E(\psi_v)}.
\end{align}
By definition of viscosity solutions, we have
\begin{align*}
(g_u)_t(\hat t)
+ F\pth{0, \essinf_{\cl B_\eta(\hat x)} \bra{-\partial^0 E(\psi_u)}} &\leq 0,\\
(g_v)_t(\hat s)
+ F\pth{0, \esssup_{\cl B_\eta(\hat x)} \bra{-\partial^0 E(\psi_v)}} &\geq 0.
\end{align*}
Subtracting these two inequalities
and using \eqref{ess-order} with the ellipticity of $F$ \eqref{ellipticity},
we arrive at
\begin{align*}
0 < \frac{\e}{(T- \hat t)^2} +\frac{\e}{(T- \hat s)^2}
&+ F\pth{0, \essinf_{\cl B_\eta(\hat x)} \bra{-\partial^0 E(\psi_u)}}
\\&- F\pth{0, \esssup_{\cl B_\eta(\hat x)} \bra{-\partial^0 E(\psi_v)}}
\leq 0,
\end{align*}
a contradiction.
Therefore we conclude that Case I cannot occur.

\subsection{Case II}
This the more classical case since there exists $\zeta \in \Tn$
and $p \in \mathcal B(\zeta)$
such that $\abs{\zeta} \leq \kappa(\e)$
and $p \neq 0$,
and we only need to construct a smooth test function
for the classical test in the definition of viscosity solutions.
Here we refer the reader to \cite{GG98ARMA,GGP13}.
We again arrive at a contradiction,
yielding that Case II cannot occur either.
Therefore the comparison principle Theorem~\ref{th:comparison}
holds.

\section{Existence of solutions via stability}
\label{sec:existence-stability}

\subsection{Stability}
\label{sec:stability}

In this section we discuss the stability
of solution of \eqref{tvf}
under an approximation by regularized
problems.

Suppose that $\set{W_m}_{m\in\N}$
is a decreasing sequence of
$C^2$ functions on $\Rn$
that converge locally uniformly to $W$
and such that the functions $W_m$
satisfy
\begin{align*}
a_m^{-1}I \leq \nabla^2 W_m(p) \leq a_m I
\qquad \text{for all $p \in \Rn$, $m\in\N$}
\end{align*}
and some sequence of positive numbers $a_m$.

\begin{example}
Let $\phi_{\ov m}$ be the standard mollifier
with support of radius $\ov m$.
Define the smoothing
\begin{align*}
W_m(p) = (W * \phi_{\ov m})(p) + \ov m \abs{p}^2 \qquad p \in \Rn.
\end{align*}
By convexity we have $W_m > W$
(clearly true for $p \neq 0$, and at $p = 0$
we use \eqref{W-bound}),
$W_m \in C^\infty$,
$\nabla^2 W_m \geq \ov m I$
and $W_\e \downarrow W$ as $\e\to0$ locally uniformly.
The bound on $\nabla^2 W_m$ from above follows from the one-homogeneity of
$W$ which yields $\nabla^2 W(a p) = a^{-1} \nabla^2 W(p)$ for $a>0$.
\end{example}

Let us introduce the regularized energies
\begin{align*}
E_m(\psi) :=
\begin{cases}
\int_\Tn W_m(\nabla \psi) & \psi \in H^2(\Tn),\\
+\infty & \psi \in L^2(\Tn) \setminus H^2(\Tn),
\end{cases}
\end{align*}
where $H^2(\Tn)$ is the standard Sobolev space.

We shall approximate the problem \eqref{tvf}
by a sequence of problems
\begin{align}
\label{approximate-problem}
u_t + F(\nabla u, -\partial^0 E_m(u(\cdot, t))) = 0,
\intertext{with initial data}
\at{u}{t=0} = u_0.
\end{align}

We have the following proposition proved
in \cite{GGP13}.

\begin{proposition}
\label{pr:Em-properties}
\ \begin{enumerate}[(a)]
\item
$E_m$ form a decreasing sequence of
proper convex lower semi-continuous functionals
on $L^2(\Tn)$
and
$E = \pth{\inf_m E_m}_*$,
the lower semi-continuous envelope of $\inf_m E_m$ in $L^2(\Tn)$.

\item
The subdifferential $\partial E_m$ 
is a singleton for all $\psi \in \mathcal{D}(\partial E_m) = H^2(\T^n)$
and its canonical restriction can be expressed as
\begin{align}
\label{approximate-operator}
    -\partial^0 E_m(\psi)
    = \divo \bra{(\nabla W_m)(\nabla \psi)}
    = \trace \bra{(\nabla^2 W_m)(\nabla \psi) \nabla^2 \psi}
        \quad \text{a.e.}
\end{align}

\item
Due to the ellipticity of $F$,
the problem \eqref{approximate-problem} is a degenerate parabolic
problem that has a unique global viscosity solution
for given continuous initial data $u_0 \in C(\T^n)$.
\end{enumerate}
\end{proposition}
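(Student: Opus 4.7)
The plan is to address the three items in order, relying on standard convex analysis and second-order elliptic regularity.

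For part (a), each $W_m \in C^2(\Rn)$ is convex with a linear lower bound $W_m \geq W \geq \la_0 \abs{p}$ and quadratic upper growth from $\nabla^2 W_m \leq a_m I$, so $E_m$ is proper, convex on $L^2(\Tn)$, and lower semi-continuous (the uniform bound $\nabla^2 W_m \geq a_m^{-1} I$ yields an $H^1$ bound on any energy-bounded sequence converging in $L^2$, and then weak lower semi-continuity of convex integrals applies). The monotonicity $E_m \geq E_{m+1}$ is immediate from $W_m \downarrow W$. For the envelope identity $E = (\inf_m E_m)_*$, one direction follows from $E_m \geq E$ combined with lower semi-continuity of $E$. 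For the reverse I would mollify any $\psi$ with $E(\psi) < \infty$ on the torus to obtain a sequence $\psi_k \in C^\infty(\Tn) \subset H^2(\Tn)$ with $\psi_k \to \psi$ in $L^2$ and $E(\psi_k) \to E(\psi)$, by Reshetnyak continuity for the one-homogeneous integrand $W$; for each smooth $\psi_k$ monotone convergence yields $\inf_m E_m(\psi_k) = E(\psi_k)$, and passing to the limit in $k$ gives the matching upper bound.

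For part (b), a direct computation of the G\^{a}teaux derivative of $E_m$ at $\psi \in H^2(\Tn)$ along $\phi \in H^2$ produces $\int_\Tn \nabla W_m(\nabla \psi) \cdot \nabla \phi \dx$; since $\nabla W_m$ is globally Lipschitz (Hessian bounded above), integration by parts identifies the corresponding $L^2$ element as the trace expression in \eqref{approximate-operator}, which belongs to $L^2(\Tn)$ because $\nabla^2 W_m$ is bounded and $\psi \in H^2$. Strict convexity of $W_m$ forces the subdifferential to be a singleton. Conversely, if $v \in L^2(\Tn)$ lies in $\partial E_m(\psi)$ then $\psi$ weakly solves the uniformly elliptic quasilinear equation $-\divo[\nabla W_m(\nabla \psi)] = v$, and a standard difference-quotient argument based on the uniform bounds $a_m^{-1} I \leq \nabla^2 W_m \leq a_m I$ upgrades $\psi$ to $H^2(\Tn)$, identifying the domain.

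For part (c), the formula in (b) rewrites \eqref{approximate-problem} as the local second-order fully nonlinear PDE $u_t + F(\nabla u, k_m(\nabla u, \nabla^2 u)) = 0$ with $k_m(p, X) := \trace[\nabla^2 W_m(p) X]$ continuous on $\Rn \times \mathcal{S}^n$ and monotone in $X$. The composition is then continuous and degenerate elliptic in the sense of \eqref{ellipticity}, and classical viscosity theory \cite{CIL,Giga06} supplies both the comparison principle and, via Perron's method together with explicit linear/quadratic barriers built from the quadratic growth of $W_m$, the unique global viscosity solution $u \in C(\Tn \times [0,\infty))$ for any $u_0 \in C(\Tn)$.

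I expect the envelope identity in (a) to be the main obstacle: the recovery sequence must bridge the smooth $H^2$-adapted energies $E_m$ and the $BV$ energy $E$, which requires a Reshetnyak-type continuity argument along a carefully chosen smooth approximation. In comparison, (b) is routine once the G\^{a}teaux derivative is computed and elliptic regularity invoked, and (c) is essentially a textbook application of Crandall-Ishii-Lions theory once the local structure of $-\partial^0 E_m$ is identified.
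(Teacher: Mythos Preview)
The paper does not give a proof of this proposition; it simply states that it was ``proved in \cite{GGP13}.'' So there is no in-paper argument to compare against, and your outline is the standard route one would expect to find in that reference.

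One point deserves care. You argue lower semi-continuity of $E_m$ by extracting an $H^1$ bound from an energy bound and invoking weak lower semi-continuity of the convex integral. That argument proves lower semi-continuity of the functional $\psi \mapsto \int_{\Tn} W_m(\nabla\psi)$ with effective domain $H^1(\Tn)$, but the paper defines $E_m$ with effective domain $H^2(\Tn)$. With that definition $E_m$ is \emph{not} lower semi-continuous in $L^2$: any $\psi \in H^1(\Tn)\setminus H^2(\Tn)$ can be approximated in $H^1$ by smooth $\psi_k$ with $\int W_m(\nabla\psi_k)$ bounded, while $E_m(\psi)=+\infty$. This is almost certainly a slip in the paper's definition (the intended effective domain should be $H^1$, which is also what makes the identification $\mathcal D(\partial E_m)=H^2$ in part (b) a genuine regularity statement rather than a tautology), but you should flag it rather than pass over it, since as written your argument does not establish the claimed lower semi-continuity of the functional the paper actually defines.

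The remainder of your plan---the mollification plus Reshetnyak-continuity recovery sequence for the envelope identity, the G\^ateaux-derivative computation together with the difference-quotient elliptic regularity for (b), and the appeal to \cite{CIL,Giga06} for (c)---is sound and standard.
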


The main theorem of this section
is the stability of solutions of \eqref{tvf}
with respect to the half-relaxed limits
\begin{align*}
\halflimsup_{m\to\infty} u_m(x,t) &:= 
    \lim_{k\to\infty} \sup_{m \geq k} \sup_{\abs{y - x} \leq \ov k}
    \sup_{\abs{s - t} \leq \ov k} u_m(y,s), \\ 
    \halfliminf_{m\to\infty} u_m(x,t) &:= 
    -\halflimsup_{m\to\infty} (-u_m)(x,t).
\end{align*}

\begin{theorem}[Stability]
\label{th:stability}
Let $u_m$ be a sequence of viscosity 
subsolutions of \eqref{approximate-problem}
on $\Tn \times [0,\infty)$,
and let $\overline u = \halflimsup_{m\to\infty} u_m$.
Assume that $\overline u < +\infty$ in $\Tn \times [0,\infty)$.
Then $\overline u$ is a viscosity subsolution of \eqref{tvf}.

Similarly, $\underline u = \halfliminf_{m\to\infty} u_m$
is a viscosity supersolution of \eqref{tvf}
provided  that $u_m$ is a sequence of viscosity supersolutions of
\eqref{approximate-problem} and $\underline u > -\infty$.
\end{theorem}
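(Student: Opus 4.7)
I focus on the subsolution statement; the supersolution case is symmetric. The function $\overline u$ is automatically upper semi-continuous (a standard property of $\halflimsup$). To verify that $\overline u$ is a viscosity subsolution, I treat the two clauses of Definition~\ref{def:visc-sol} in turn, fixing a test point $(\hat x, \hat t)\in Q$.

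For the \emph{conventional test} ($\vp\in C^{2,1}$ with $\nabla\vp(\hat x,\hat t)\neq 0$), the argument is the classical half-relaxed limits machinery for degenerate parabolic equations. Since $W_m\to W$ locally uniformly in $C^2(\Rn\setminus\{0\})$, the local operator $-\partial^0 E_m(\vp)=\trace[(\nabla^2 W_m)(\nabla\vp)\nabla^2\vp]$ converges to $k(\nabla\vp,\nabla^2\vp)$ uniformly on a neighborhood of $(\hat x,\hat t)$. Making the maximum of $\overline u-\vp$ strict by a quadratic penalty, extracting maximizers of $u_m-\vp$ converging to $(\hat x,\hat t)$, and passing to the limit in the subsolution inequality for $u_m$ using continuity of $F$ yields the required inequality for $\overline u$.

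The \emph{faceted test} is the main novelty. Let $\vp(x,t)=\psi(x)+g(t)$ be an admissible faceted test function in general position of radius $\eta$ with respect to $\overline u$ at $(\hat x,\hat t)$. Since $-\partial^0 E(\psi)$ is nonlocal while $-\partial^0 E_m$ is local, we must inject the nonlocal information into a smooth test function for $u_m$. Following the scheme sketched in the introduction, I perturb $\psi$ by one step of the implicit Euler scheme for $E_m$ and set
\begin{align*}
    \psi_{a,m} := (I + a\partial E_m)^{-1}\psi \qquad (a>0).
\end{align*}
By elliptic regularity for the strongly convex quadratic-growth energy $E_m$, $\psi_{a,m}\in C^\infty(\Tn)$, and the resolvent equation yields the pointwise identity
\begin{align*}
    -\partial^0 E_m(\psi_{a,m}) = \frac{\psi_{a,m}-\psi}{a}.
\end{align*}
Thus $\vp_{a,m}(x,t):=\psi_{a,m}(x)+g(t)$ is a classical test function for the regularized subsolution $u_m$. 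Exploiting the general position property, for each translation $h\in \cl B_\eta(0)$ one finds a maximizer $(x_{a,m,h},t_{a,m,h})$ of $u_m(x,t)-\vp_{a,m}(x-h,t)$ inside a fixed parabolic neighborhood of $(\hat x,\hat t)$. Setting $y_{a,m,h}:=x_{a,m,h}-h$ and using translation invariance of $E_m$, the subsolution inequality for $u_m$ reads
\begin{align*}
    g'(t_{a,m,h}) + F\bigl(\nabla\psi_{a,m}(y_{a,m,h}),\,(\psi_{a,m}(y_{a,m,h})-\psi(y_{a,m,h}))/a\bigr) \leq 0.
\end{align*}

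One then passes to the limits successively: first $m\to\infty$ with $a$ fixed (using uniform convergence $\psi_{a,m}\to\psi_a:=(I+a\partial E)^{-1}\psi$ and the half-relaxed convergence $u_m\to\overline u$ for the maximizer locations), and then $a\to 0+$ (using $\psi_a\to\psi$ uniformly together with $(\psi_a-\psi)/a\to -\partial^0 E(\psi)$ in $L^2(\Tn)$ from Proposition~\ref{pr:resolvent-convergence}; admissibility $\psi\in\dom(\partial E)$ is essential here). In the limit, $y_{a,m,h}$ lies in the facet region $\cl B_\eta(\hat x)$ on which $\psi$ is constant, so $\nabla\psi_{a,m}(y_{a,m,h})\to 0$, which handles the first argument of $F$. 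The main obstacle is converting the $L^2$ convergence of $(\psi_a-\psi)/a$ into a pointwise bound of the form $\essinf_{\cl B_\delta(\hat x)}\bra{-\partial^0 E(\psi)}$: $L^2$ convergence yields only a.e.\ pointwise convergence along a subsequence. The freedom to choose $h\in \cl B_\eta(0)$ is precisely the device that overcomes this — by varying $h$ the point $y_{a,m,h}$ sweeps through a neighborhood of $\hat x$, so along a suitable diagonal subsequence one can place $y_{a,m,h}$ on the a.e.\ convergence set inside an arbitrarily small ball around $\hat x$. Combined with the monotonicity \eqref{ellipticity} of $F$ in its second argument and continuity in the first, this delivers the faceted inequality for $\overline u$ with some $\delta\in(0,\eta)$, completing the verification.
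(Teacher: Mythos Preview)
Your proposal is correct and follows the same approach the paper outlines in its introduction (the full proof being deferred to \cite{GGP13}): the resolvent perturbation $\psi_{a,m}=(I+a\partial E_m)^{-1}\psi$ as a smooth test function carrying the nonlocal information, the sequential limits $m\to\infty$ then $a\to0$ via uniform convergence of the resolvents and Proposition~\ref{pr:resolvent-convergence}, and the use of the general-position translation freedom $h\in\cl B_\eta(0)$ to convert the $L^2$ convergence of $(\psi_a-\psi)/a$ into the required $\essinf$ bound near $\hat x$.
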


The proof is the same as in \cite{GGP13}
and we shall skip it here.

\subsection{Existence}

We shall use the stability theorem to prove the following
existence result.

\begin{theorem}[Existence]
\label{th:existence}
If $F$ is continuous and degenerate elliptic \eqref{ellipticity},
and $W$ satisfies \eqref{W-regularity} and \eqref{W-bound},
and $u_0 \in C(\Tn)$,
there exists a unique solution
$u \in C(\Tn \times [0, \infty))$ of \eqref{tvf}
with the initial data $u_0$.
Furthermore, if $u_0 \in \Lip(\Tn)$ then
\begin{align*}
\norm{\nabla u(\cdot, t)}_\infty \leq \norm{\nabla u_0}_\infty.
\end{align*}
\end{theorem}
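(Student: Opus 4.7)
The plan is to realize the solution as the common value of the upper and lower half-relaxed limits of the viscosity solutions $u_m$ of the regularized problems \eqref{approximate-problem}, and then apply the comparison principle Theorem~\ref{th:comparison} to identify them. Take the sequence $\{W_m\}$ from the example in Section~\ref{sec:stability}. Proposition~\ref{pr:Em-properties}(c) produces a unique $u_m \in C(\Tn \times [0,\infty))$ solving \eqref{approximate-problem} with initial data $u_0$. The constant-in-space functions $v^\pm(x,t) := \max u_0 \pm C t$ with $C := |F(0,0)|$ are a classical supersolution and subsolution of each regularized problem, so standard comparison for the regularized problem bounds $u_m$ uniformly in $m$ on every finite time interval. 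Consequently
\begin{align*}
\bar u := \halflimsup_{m\to\infty} u_m, \qquad
\underline u := \halfliminf_{m\to\infty} u_m
\end{align*}
are finite, and Theorem~\ref{th:stability} ensures that $\bar u$ is a viscosity subsolution and $\underline u$ a viscosity supersolution of \eqref{tvf}.

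The key step is to verify the correct initial trace: $\bar u(\cdot, 0) \leq u_0 \leq \underline u(\cdot, 0)$. For this one constructs, at each point $x_0 \in \Tn$, an $m$-dependent supersolution $\Psi_m$ of the regularized problem satisfying $\Psi_m(\cdot, 0) \geq u_0$, $\Psi_m(x_0, 0) = u_0(x_0) + \sigma$, and an upper estimate
\begin{align*}
\Psi_m(x,t) \leq u_0(x_0) + \sigma + \omega(|x - x_0|) + C_\sigma t
\end{align*}
that is \emph{uniform in $m$}, where $\omega$ is a modulus of continuity of $u_0$ and $\sigma > 0$ is arbitrary. Following the strategy of \cite{GG98ARMA, GGP13}, the candidate $\Psi_m$ is built from the convex conjugate $W_m^*$ of $W_m$, suitably scaled and combined with a linear-in-$t$ term, but $W_m^*$ is truncated outside a large ball of gradients so that $\nabla \Psi_m$ takes values only in a region where $\nabla^2 W_m$ is bounded uniformly in $m$; this is the ``more robust cutoff of large gradients'' mentioned in the introduction, which avoids any appeal to radial symmetry or one-dimensionality of $W_m$. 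With such a uniform gradient bound the quasilinear operator $\trace \bra{(\nabla^2 W_m)(\nabla \Psi_m) \nabla^2 \Psi_m}$ in \eqref{approximate-operator} is controlled by a single constant $C_\sigma$ independent of $m$, so $\Psi_m$ is a classical supersolution of \eqref{approximate-problem}. Comparison for the regularized problem gives $u_m \leq \Psi_m$, and passing $m \to \infty$ followed by $\sigma \to 0$ yields $\bar u(x_0, 0) \leq u_0(x_0)$. A symmetric subsolution barrier gives $\underline u(\cdot, 0) \geq u_0$.

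With the initial inequality established, Theorem~\ref{th:comparison} yields $\bar u \leq \underline u$ on any parabolic cylinder $Q = \Tn \times (0,T)$, and since $\underline u \leq \bar u$ by definition, these limits coincide. The common function $u$ is continuous and is the unique viscosity solution of \eqref{tvf}--\eqref{tvf-ic}; since $T$ was arbitrary, $u \in C(\Tn \times [0,\infty))$. For the Lipschitz estimate, assume $u_0 \in \Lip(\Tn)$ with $L := \norm{\nabla u_0}_\infty$ and fix $h \in \Rn$. Then $u_0(\cdot + h) \leq u_0(\cdot) + L|h|$ on $\Tn$; since the equation \eqref{tvf} is invariant under spatial translations and under addition of constants, both the translated solution $u(\cdot + h, \cdot)$ and $u(\cdot, \cdot) + L|h|$ are viscosity solutions of \eqref{tvf} (by the uniqueness just proved), so Theorem~\ref{th:comparison} forces $u(x+h, t) \leq u(x,t) + L|h|$ for all $x, t, h$, which is the claimed gradient bound.

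The main obstacle is the barrier construction in the second paragraph. The difficulty is to design $\Psi_m$ simultaneously as a genuine supersolution of each nondegenerate quasilinear problem and with a tight upper envelope independent of $m$, despite the fact that $\nabla^2 W_m$ blows up as $m \to \infty$ in the directions where $W$ is one-homogeneous but not smooth. The cutoff of $W_m^*$ must therefore be placed in a region where $W_m$ already looks like $W$, so that the quasilinear operator applied to the profile is controlled uniformly in $m$; executing this cutoff in the anisotropic, higher-dimensional setting, without radial symmetry or one-dimensionality, is the main technical novelty of the step.
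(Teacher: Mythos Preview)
Your outline matches the paper's proof: solve the regularized problems, take half-relaxed limits, apply stability, verify the initial trace by $m$-dependent barriers built from Legendre conjugates of $W_m$, and conclude by comparison; the Lipschitz bound via translation invariance is exactly the ``standard argument'' the paper invokes.

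One point in your barrier paragraph deserves correction. The uniform-in-$m$ control on $\trace\bra{(\nabla^2 W_m)(\nabla\Psi_m)\nabla^2\Psi_m}$ does \emph{not} come from confining $\nabla\Psi_m$ to a region where $\nabla^2 W_m$ is bounded uniformly in $m$. In fact $\nabla^2 W_m(p)$ blows up as $m\to\infty$ near $p=0$, and since the barrier is centered at $x_0$ one necessarily has $\nabla\Psi_m(x_0)=0$, so that region cannot be avoided. The mechanism is instead the Legendre identity $\nabla^2\phi^\star(x)=\bra{\nabla^2\phi(\nabla\phi^\star(x))}^{-1}$ (Lemma~\ref{le:bounded-conj}): if the spatial profile is a conjugate of (a perturbation of) $W_m$, then the product $(\nabla^2 W_m)\,\nabla^2\Psi_m$ is bounded by a multiple of the identity and its trace by $n/A$, independently of $m$ (Lemma~\ref{le:conj-estimate}). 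The paper implements the gradient cutoff not by truncating $W_m^\star$ but by adding to $W_m$ a convex term $q\psi(\cdot/q)$ that equals $+\infty$ outside $B_q(0)$ and then taking the conjugate; this forces $\abs{\nabla\Psi_m}<q$ (so that $F(\nabla\Psi_m,\cdot)$ is bounded via $\beta_{A,q}$) while preserving the conjugate inequality $\nabla^2 W_{m;A,q}^\star\leq A^{-1}\bra{\nabla^2 W_m}^{-1}$.

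A trivial slip: your lower barrier $v^-(x,t)=\max u_0 - Ct$ does not lie below $u_0$ at $t=0$; take $\min u_0 - Ct$ instead, or, as the paper does, note that $(x,t)\mapsto -F(0,0)\,t+\alpha$ is an exact solution of \eqref{approximate-problem} for every $\alpha$.
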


\newcommand{\ou}{{\overline u}}
\newcommand{\uu}{{\underline u}}

The proof of the theorem will proceed in three steps:
\begin{inparaenum}[1)]
\item due to the stability, by finding the solution $u_m$ of
the problem \eqref{approximate-problem}
for all $m \geq 1$,
we can find a subsolution $\ou$ and a supersolution $\uu$
of \eqref{tvf};
\item \label{existence-step-2}
    a barrier argument at $t = 0$ shows that
$\ou$ and $\uu$ have the correct initial data $u_0$; and
\item \label{existence-step-3}
the comparison principle shows that $\ou = \uu$
is the unique viscosity solution of \eqref{tvf},
and the Lipschitz estimate holds.
\end{inparaenum}

Before giving a proof of the existence theorem,
we construct barriers for step \ref{existence-step-2}.

Since the operator \eqref{approximate-operator}
degenerates at points where $\nabla u = 0$ as $m \to\infty$,
it seems to be necessary to construct barriers that depend
on $m$.
We will use the Wulff functions for energy $E_m$;
these were previously considered in the proof of
stability for general equations of the type \eqref{approximate-problem}
in one-dimensional setting in \cite{GG99CPDE}
and in the isotropic setting in \cite{GGP13}.
However, the construction is slightly more complicated in
the anisotropic case in higher dimension.
Since the operator $F$ in \eqref{tvf} depends on the derivative
of the solutions, we have to construct test functions that
have uniformly bounded space derivatives as $m \to \infty$.
However, the derivatives of the Wulff functions for $E_m$
blow up as $m \to \infty$.
Therefore we have to cut off large derivatives.
This was done in \cite{GG99CPDE, GGP13} by a simple idea
that can be only applied for one-dimensional
or radially symmetric Wulff functions.
Here we present a different idea that relies on the modification of $W_m$
directly using the properties of the Legendre-Fenchel transform.

For a convex proper function $\phi : \Rn \to (-\infty, +\infty]$
we define its convex conjugate $\phi^\star$
via the Legendre-Fenchel transform as
\begin{align*}
\phi^\star(x) := \sup_{p \in \Rn} \bra{x \cdot p - \phi(p)}.
\end{align*}
It is well-known that $\phi^\star$ is also convex
and that, if $\phi$ is also lower semi-continuous,
$\phi^{\star\star} = \phi$.

We give the proof of the following lemma for completeness.

\begin{lemma}
\label{le:bounded-conj}
Let $\Omega \subset \Rn$ be a non-empty bounded convex open set
and let $\phi \in LSC(\Rn)$ be a convex function on $\Rn$
such that $\phi \in C^2(\Omega)$, $\phi = \infty$ on $\Rn \setminus \Omega$
and $\phi$ is strictly convex in $\Omega$, i.e., $\nabla^2 \phi > 0$
in $\Omega$.

Then $\phi^\star \in C^2(\Rn) \cap \Lip(\Rn)$,
\begin{align}
\label{2nd-der}
\nabla \phi^\star(x) \in \Omega \quad \text{and} \quad
\nabla^2 \phi^\star(x) =
\bra{\nabla^2 \phi(\nabla \phi^\star(x))}^{-1} > 0
\qquad x \in \Rn.
\end{align}
\end{lemma}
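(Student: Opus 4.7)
The plan is to combine convex analysis (attainment of the sup, duality between $\nabla\phi$ and $\nabla\phi^\star$) with the inverse function theorem applied to $\nabla\phi$. The Lipschitz bound is essentially free: since $\phi = +\infty$ on $\Omega^c$, the supremum in the definition of $\phi^\star$ runs only over $p \in \Omega$, so for any $x,y \in \Rn$ we obtain $|\phi^\star(x)-\phi^\star(y)| \le (\sup_{p\in\Omega}|p|)\,|x-y|$, which is finite by boundedness of $\Omega$. This also gives the bound $|\nabla\phi^\star(x)| \le \sup_{p\in\Omega}|p|$ once differentiability is established.

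Next I would establish attainment and uniqueness of a maximizer. Note that the hypothesis $\phi \in LSC(\Rn)$ together with $\phi \equiv +\infty$ on $\Rn \setminus \Omega$ forces $\phi(p) \to +\infty$ as $p \to \partial\Omega$ (any lower semi-continuous extension by $+\infty$ across the boundary of its effective domain must blow up there). Hence for fixed $x$ the function $p \mapsto x\cdot p - \phi(p)$ is upper semi-continuous on $\Rn$, finite and coercive on $\Omega$, and equal to $-\infty$ on $\Omega^c$, so its supremum is attained at some point $p(x) \in \Omega$. Strict convexity of $\phi$ in $\Omega$ makes $p(x)$ unique, and the first-order condition at an interior maximizer gives $\nabla\phi(p(x)) = x$. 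Standard convex analysis then identifies $p(x)$ as the unique element of $\partial\phi^\star(x)$, so $\phi^\star$ is differentiable at $x$ with
\begin{equation*}
\nabla\phi^\star(x) = p(x) \in \Omega.
\end{equation*}

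For the $C^2$-regularity, I would view $\nabla\phi : \Omega \to \Rn$ as a $C^1$-map which is injective (strict convexity), has invertible Jacobian $\nabla^2\phi > 0$ everywhere on $\Omega$, and is surjective onto $\Rn$ (by the attainment step: for every $x \in \Rn$ the point $p(x) \in \Omega$ satisfies $\nabla\phi(p(x)) = x$). The inverse function theorem then promotes $\nabla\phi$ to a $C^1$-diffeomorphism $\Omega \to \Rn$, whose inverse is precisely $\nabla\phi^\star$. Consequently $\nabla\phi^\star \in C^1(\Rn)$, i.e.\ $\phi^\star \in C^2(\Rn)$, and differentiating the identity $\nabla\phi(\nabla\phi^\star(x)) = x$ gives
\begin{equation*}
\nabla^2\phi(\nabla\phi^\star(x))\,\nabla^2\phi^\star(x) = I,
\end{equation*}
which is \eqref{2nd-der}; positive definiteness of $\nabla^2\phi^\star$ follows from that of $\nabla^2\phi$.

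The only delicate ingredient is the surjectivity of $\nabla\phi$, and this is exactly where the hypothesis matters: without the implicit boundary blow-up $\phi \to \infty$ on $\partial\Omega$ (ensured by lower semi-continuity combined with $\phi \equiv \infty$ on $\Omega^c$), the maximum of $x\cdot p - \phi(p)$ could escape to the boundary for large $|x|$ and $\phi^\star$ would fail to be $C^2$ there. Every other step is then routine convex analysis and the inverse function theorem.
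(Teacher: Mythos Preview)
Your proof is correct and follows essentially the same route as the paper: attainment of the supremum at an interior point $p(x)\in\Omega$ (using that lower semi-continuity and $\phi=\infty$ on $\Omega^c$ force $\phi\to\infty$ at $\partial\Omega$), the first-order condition $\nabla\phi(p(x))=x$, identification $\nabla\phi^\star(x)=p(x)$, and the inverse function theorem applied to $\nabla\phi$ to obtain $C^2$-regularity and the Hessian formula. Your version is in fact slightly more complete, since you spell out the Lipschitz bound and the surjectivity of $\nabla\phi$, both of which the paper leaves implicit.
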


\begin{proof}
Since $x \cdot p - \phi(p)$ is upper semi-continuous
and $x \cdot p - \phi(p) = -\infty$ on $\partial \Omega$,
the supremum in the definition is for every $x \in \Rn$
attained at a point $p \in \Omega$
such that $\nabla \phi(p) = x$.
Additionally, $p$ is unique due to the strict convexity,
and the function $p(x)$ is $C^1$ by the inverse function theorem.
If we differentiate $\phi^\star(x) = x \cdot p(x) - W(p(x))$
we get $\nabla \phi^\star(x) = p(x) \in \Omega$.
Thus $\nabla \phi^\star$ is the inverse map of $\nabla \phi$
and the inverse function theorem implies
the expression for $\nabla^2 \phi^\star(x)$.
\end{proof}

Let $\psi:\Rn \to (-\infty, \infty]$ be a
lower semi-continuous convex
function such that $\psi \in C^\infty(B_1(0))$
and
$\psi(p) = \infty$ for $\abs{p} \geq 1$
and $\psi(0) = 0$.
Note that the semi-continuity implies that
$\psi(p) \to \infty$ as $\abs{p} \to 1^-$.
For given positive constants $m, A, q$, we define
\begin{align*}
W_{m;A,q}(p) := A \pth{W_m(p)
            + q \psi\pth{\frac p q} - W_m(0)}.
\end{align*}
We also define the quasilinear differential operators
$\mathcal L_m : C^2(\Rn) \to \R$ for $m \in \N$
motivated by the expression for $-\partial^0 E_m$
in \eqref{approximate-operator}
as
\begin{align*}
\mathcal L_m (u)(x) := \trace \bra{(\nabla^2 W_m)(\nabla u(x)) \nabla^2 u(x)}
\qquad u \in C^2(\Rn).
\end{align*}

Functions $W_{m;A,q}^*$, the conjugates of $W_{m;A,q}$,
approximate the Wulff functions $W_m^*$ of the energies $E_m$
and we summarize their properties in the following lemma.

\begin{lemma}
\label{le:conj-estimate}
For any $m, A, q$ positive, $W_{m;A,q}^*$ are
strictly convex, nonnegative, $C^2$ functions on $\Rn$
and
\begin{align*}
\abs{\nabla W_{m;A,q}^\star(x)} < q, \qquad
0 < \mathcal L_m(W_{m;A,q}^\star)(x) \leq A^{-1} n \qquad x \in \Rn.
\end{align*}
\end{lemma}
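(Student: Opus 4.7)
The plan is to apply Lemma~\ref{le:bounded-conj} to $W_{m;A,q}$ on $\Omega = B_q(0)$, then read off the first two assertions from its conclusion, and finally obtain the trace bound via a short linear algebra identity.

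First I would verify the hypotheses of Lemma~\ref{le:bounded-conj}. By the assumptions on $\psi$, the shift $q\psi(p/q)$ equals $+\infty$ for $|p|\geq q$ and is smooth on $B_q(0)$, so $W_{m;A,q}$ is finite precisely on $B_q(0)$ and of class $C^2$ there. Convexity of both $W_m$ and $\psi$ passes through, while the standing bound $\nabla^2 W_m \geq a_m^{-1} I$ from Section~\ref{sec:stability}, combined with $\nabla^2 \psi \geq 0$ on $B_1(0)$, gives
\[
\nabla^2 W_{m;A,q}(p) \geq A a_m^{-1} I > 0, \qquad p \in B_q(0),
\]
so $W_{m;A,q}$ is strictly convex in $B_q(0)$. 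Lower semi-continuity on all of $\Rn$ is inherited from $\psi$.

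Second, Lemma~\ref{le:bounded-conj} then delivers $W_{m;A,q}^\star \in C^2(\Rn)\cap\Lip(\Rn)$ together with
\[
\nabla W_{m;A,q}^\star(x) \in B_q(0), \qquad
\nabla^2 W_{m;A,q}^\star(x) = \bra{\nabla^2 W_{m;A,q}\pth{\nabla W_{m;A,q}^\star(x)}}^{-1} > 0
\]
for every $x \in \Rn$. The first inclusion is the gradient bound, and the positive definiteness of the Hessian yields strict convexity. Nonnegativity comes from testing the Legendre--Fenchel supremum with $p=0$: since $\psi(0)=0$ forces $W_{m;A,q}(0) = 0$, we have $W_{m;A,q}^\star(x) \geq x\cdot 0 - W_{m;A,q}(0) = 0$.

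For the operator bound, set $p := \nabla W_{m;A,q}^\star(x)$, $A_m := \nabla^2 W_m(p)$, and $B := q^{-1}(\nabla^2 \psi)(p/q)$; then $\nabla^2 W_{m;A,q}(p) = A(A_m + B)$, so
\[
\mathcal L_m(W_{m;A,q}^\star)(x)
= \trace\bra{A_m\, \nabla^2 W_{m;A,q}^\star(x)}
= A^{-1}\,\trace\bra{A_m(A_m + B)^{-1}}.
\]
Writing $A_m(A_m+B)^{-1} = I - B(A_m+B)^{-1}$ and using the cyclic trace identity together with the symmetry of $(A_m+B)^{-1/2}$,
\[
\trace\bra{A_m(A_m+B)^{-1}}
= n - \trace\bra{(A_m+B)^{-1/2} B (A_m+B)^{-1/2}} \in (0,\, n],
\]
because the subtracted term is the trace of a positive semi-definite matrix while the left-hand side equals the trace of a positive definite one (by the same similarity trick applied to $A_m(A_m+B)^{-1}$). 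Dividing by $A$ gives the advertised bound. The only slightly delicate point is checking that $\nabla^2 W_{m;A,q}$ is genuinely positive definite throughout $B_q(0)$; this relies essentially on the uniform Hessian lower bound for $W_m$ in Section~\ref{sec:stability}, since $\psi$ is only assumed convex, so the Hessian of $q\psi(\cdot/q)$ may degenerate. The remainder is linear algebra.
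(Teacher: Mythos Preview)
Your proof is correct and follows the same route as the paper's: apply Lemma~\ref{le:bounded-conj} with $\Omega=B_q(0)$ to obtain regularity, strict convexity and the gradient bound, then use the Hessian inverse formula \eqref{2nd-der} to control $\mathcal L_m(W_{m;A,q}^\star)$. The only cosmetic difference is in the final trace estimate: the paper uses the operator-monotone inequality $(A_m+B)^{-1}\le A_m^{-1}$ together with the fact that $\trace(MN)\ge0$ for $M,N\ge0$, whereas you use the algebraic identity $A_m(A_m+B)^{-1}=I-B(A_m+B)^{-1}$ and a similarity argument; both yield the bound $\trace[A_m(A_m+B)^{-1}]\in(0,n]$ immediately.
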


\begin{proof}
Strict convexity and regularity follows from Lemma~\ref{le:bounded-conj}.
In particular, we observe that $\Omega = B_q(0)$
and hence $\nabla W_{m;A,q}^\star \in B_q(0)$.
Nonnegativity is also obvious.

Let $x \in \Rn$ and set $p = \nabla W_{m;A,q}^\star(x)$.
Since $\psi$ in the definition of $W_{m;A,q}$ is convex
and thus $\nabla^2 \psi \geq 0$ on $B_1(0)$,
\eqref{2nd-der}
yields
\begin{align*}
0 < \nabla^2 W_{m;A,q}^\star(x) &= (\nabla^2 W_{m;A,q}(p))^{-1}\\
&= A^{-1}\bra{\nabla^2 W_m(p)
+ \frac1 q (\nabla^2 \psi)\pth{\frac{p}{q}}}^{-1}\\
&\leq A^{-1}\bra{\nabla^2 W_m(p)}^{-1}.
\end{align*}
We also recall that if $M, N \geq 0$ then also $\trace MN \geq 0$.
Therefore
\begin{align*}
\mathcal L_m(W_{m;A,q}^\star)(x) =
\trace \bra{(\nabla^2 W_m)(p) \nabla^2 W_{m;A,q}^\star(x)}\leq
A^{-1} \trace I = A^{-1} n.
\end{align*}
Similarly $\mathcal L_m(W_{m;A,q}^\star)(x) > 0$.
\end{proof}

Now we define the barriers
\begin{align*}
\overline\phi_{m;A,q}(x,t) &:= \be_{A,q} t + W_{m; A,q}^\star(x),\\
\underline\phi_{m;A,q}(x,t) &:= -\be_{A,q} t - W_{m; A,q}^\star(-x),
\end{align*}
where
\begin{align}
\label{be-Aq}
\be_{A,q} := \sup_{p \in B_q(0)} \sup_{\abs{\xi} \leq A^{-1} n} \abs{F(p, \xi)} + 1< \infty.
\end{align}

\begin{corollary}
For any $m, A, q > 0$
the function $\overline \phi_{m;A,q}$ is a classical supersolution
of \eqref{approximate-problem} on $\Rn$
and the function $\underline \phi_{m;A,q}$ is a classical subsolution
of \eqref{approximate-problem} on $\Rn$.
\end{corollary}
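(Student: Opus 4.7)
The plan is to verify the two pointwise differential inequalities by direct computation, with the bounds coming straight from Lemma~\ref{le:conj-estimate} and the choice of $\be_{A,q}$ in \eqref{be-Aq}. First I would compute the partial derivatives of $\overline\phi_{m;A,q}(x,t) = \be_{A,q} t + W^\star_{m;A,q}(x)$:  we have $(\overline\phi_{m;A,q})_t \equiv \be_{A,q}$, $\nabla \overline\phi_{m;A,q}(x,t) = \nabla W^\star_{m;A,q}(x)$, and $\nabla^2 \overline\phi_{m;A,q}(x,t) = \nabla^2 W^\star_{m;A,q}(x)$, so by definition of $\mathcal L_m$ we have $\mathcal L_m(\overline\phi_{m;A,q})(x,t) = \mathcal L_m(W^\star_{m;A,q})(x)$.

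Then Lemma~\ref{le:conj-estimate} gives $\nabla \overline\phi_{m;A,q}(x,t) \in B_q(0)$ and $\abs{-\mathcal L_m(\overline\phi_{m;A,q})(x,t)} \leq A^{-1}n$. Hence the pair $(\nabla \overline\phi_{m;A,q}, -\mathcal L_m(\overline\phi_{m;A,q}))$ lies in the set over which $\be_{A,q}$ in \eqref{be-Aq} is taken as a supremum of $\abs F$, so
\begin{align*}
    F\pth{\nabla \overline\phi_{m;A,q}, -\mathcal L_m(\overline\phi_{m;A,q})} \geq -(\be_{A,q} - 1).
\end{align*}
Adding $(\overline\phi_{m;A,q})_t = \be_{A,q}$ yields $(\overline\phi_{m;A,q})_t + F(\cdots) \geq 1 > 0$, so $\overline\phi_{m;A,q}$ is a classical supersolution of \eqref{approximate-problem}.

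For $\underline\phi_{m;A,q}(x,t) = -\be_{A,q} t - W^\star_{m;A,q}(-x)$ I would do the analogous computation, being careful with the sign chain from the reflection $x \mapsto -x$: the two minus signs combine so that $\nabla \underline\phi_{m;A,q}(x,t) = \nabla W^\star_{m;A,q}(-x)$, while the Hessian acquires an overall minus sign, $\nabla^2 \underline\phi_{m;A,q}(x,t) = -\nabla^2 W^\star_{m;A,q}(-x)$, hence $\mathcal L_m(\underline\phi_{m;A,q})(x,t) = -\mathcal L_m(W^\star_{m;A,q})(-x)$. Applying Lemma~\ref{le:conj-estimate} at the point $-x$ shows again that $\nabla \underline\phi_{m;A,q} \in B_q(0)$ and $\abs{-\mathcal L_m(\underline\phi_{m;A,q})} \leq A^{-1} n$, so by \eqref{be-Aq} the quantity $F(\nabla \underline\phi_{m;A,q}, -\mathcal L_m(\underline\phi_{m;A,q}))$ is at most $\be_{A,q} - 1$; adding $(\underline\phi_{m;A,q})_t = -\be_{A,q}$ gives $\leq -1 < 0$, which is the classical subsolution inequality.

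There is no real obstacle here: once one has committed to the definitions of $\overline\phi_{m;A,q}$, $\underline\phi_{m;A,q}$ and $\be_{A,q}$, the corollary is an immediate bookkeeping exercise. The only point demanding a moment of care is the sign handling in the $-x$ argument of $\underline\phi_{m;A,q}$, which is precisely what makes the subsolution inequality work out in the right direction.
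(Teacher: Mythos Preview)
Your proposal is correct and follows exactly the paper's route: invoke Lemma~\ref{le:conj-estimate} for the gradient and $\mathcal L_m$ bounds, plug into the definition \eqref{be-Aq} of $\be_{A,q}$, and for $\underline\phi_{m;A,q}$ use the reflection identity $\mathcal L_m(v)(x) = -\mathcal L_m(u)(-x)$ when $v(x) = -u(-x)$, which the paper records explicitly. One cosmetic slip: since $-\partial^0 E_m = \mathcal L_m$ by \eqref{approximate-operator}, the second argument of $F$ in \eqref{approximate-problem} should be $\mathcal L_m(\overline\phi_{m;A,q})$ rather than $-\mathcal L_m(\overline\phi_{m;A,q})$, but this is harmless because your bound goes through $\abs{\xi} \leq A^{-1}n$ anyway.
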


\begin{proof}
The corollary follows from Lemma~\ref{le:conj-estimate}
and the definition of $\be_{A,q}$ in \eqref{be-Aq}.
Additionally, we observe that if $u \in C^2(\Rn)$
and $v(x) = - u(-x)$ then
\begin{align*}
\mathcal L_m(v)(x) = -\mathcal L_m(u)(-x).
\end{align*}
\end{proof}

Finally, we observe that $W_{m; A, q}^\star$ can be bound from below
away from the origin.

\begin{lemma}
\label{le:lower-bound-W}
For any $\de, K > 0$ there exist $m_0, A, q > 0$
such that
\begin{align*}
W_{m;A, q}^\star (x) \geq 2K \qquad \text{for all $x$, $\abs{x} \geq \de$,
and $m \geq m_0$.}
\end{align*}
\end{lemma}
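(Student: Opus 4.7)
The plan is to obtain a direct lower bound on $W_{m;A,q}^\star(x)$ by evaluating the defining supremum at a single well-chosen test point aligned with $x$, and then tune the three parameters in the order $q$, $A$, $m_0$. Given $x$ with $\abs{x}\ge\de$, I would test at $p := (q/2)\, x/\abs{x}$, which lies in $B_q(0)$ so that $\psi(p/q) = \psi((1/2)\, x/\abs{x})$ is finite. Setting $M_\psi := \sup_{\abs{y}\le 1/2} \psi(y) < \infty$ (finite since $\psi \in C^\infty(B_1(0))$) and unpacking the definition of $W_{m;A,q}$, one immediately obtains
\begin{align*}
W_{m;A,q}^\star(x) \ge x\cdot p - W_{m;A,q}(p)
\ge \tfrac{q}{2}\de - A\bra{W_m(p) - W_m(0) + q M_\psi}.
\end{align*}

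Next, I would control $W_m(p) - W_m(0)$ uniformly in the direction $x/\abs{x}$ by appealing to the locally uniform convergence $W_m \to W$ on the compact set $\cl B_{q/2}(0)$, combined with $W(0) = 0$ and the linear bound $W(p) \le \La \abs{p}$ with $\La := \max_{\abs{e}=1} W(e) < \infty$ (finite by continuity and one-homogeneity of $W$). This produces $m_0 = m_0(q)$ such that $W_m(p) - W_m(0) \le \tfrac{q}{2} \La + 1$ for all $m \ge m_0$ and all $x$ with $\abs{x}\ge\de$, and hence
\begin{align*}
W_{m;A,q}^\star(x) \ge \tfrac{q}{2}\de - A\bra{\tfrac{q}{2}\La + 1 + q M_\psi}.
\end{align*}

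Finally, I would tune the constants in sequence: first pick $q := 8K/\de$ so that the leading term $\tfrac{q}{2}\de = 4K$ already exceeds $2K$ by a margin of $2K$; then, with $q$ fixed, choose $A > 0$ small enough that $A\bra{\tfrac{q}{2}\La + 1 + q M_\psi} \le 2K$; and only then pick $m_0$ according to the previous step. These choices yield $W_{m;A,q}^\star(x) \ge 2K$ for all $\abs{x}\ge\de$ and $m\ge m_0$, as desired. The only mildly subtle point is ensuring that the control on $W_m(p) - W_m(0)$ is uniform in the direction $x/\abs{x}$, but this is immediate from uniform convergence of $W_m$ to $W$ on the bounded set $\cl B_{q/2}(0)$ together with the continuity of $W$ on the unit sphere; no finer argument is required.
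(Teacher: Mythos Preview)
Your proof is correct and follows essentially the same approach as the paper: test the Legendre--Fenchel supremum at $p=(q/2)\,x/\abs{x}$, bound $W_m(p)-W_m(0)$ via locally uniform convergence $W_m\to W$ together with the one-homogeneity bound on $W$, bound the $\psi$-term by a finite constant on the half-radius sphere, and tune $q$, $A$, $m_0$ accordingly. The only cosmetic difference is that the paper packages the $W$- and $\psi$-contributions into a single constant $\mu:=\sup_{\abs{p}=1/2}[W(p)+\psi(p)]$ and gives explicit closed-form choices $A=\de/(8\mu)$, $q=8K/\de$, whereas you separate the two into $\La$ and $M_\psi$ and choose $A$ implicitly; the logic is identical.
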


\begin{proof}
Let us define
\begin{align}
\label{mu-def}
\mu := \sup_{\abs{p}=1/2} \bra{W(p) + \psi(p)} \in (0, \infty).
\end{align}
Now we set
\begin{align*}
A := \frac\de{8\mu}, \qquad q := \frac{8 K}{\de}.
\end{align*}
By the locally uniform convergence of $W_m \to W$,
we can find $m_0 > 0$ such that
\begin{align*}
\sup_{\abs{p} = q/2} \abs{W_m(p) -W_m(0) - W(p)} \leq q\mu
\qquad m > m_0.
\end{align*}
Now for any $x$ such that $\abs{x} \geq \de$
and any $m > m_0$,
setting $p = \frac{q}{2} \frac{x}{\abs{x}}$,
we estimate
\begin{align*}
W_{m;A,q}^\star(x) &\geq x \cdot p
- W_{m;A,q}\pth{p}\\
&= \frac{q}{2}\abs{x}
- A \pth{W_m(p)
            + q \psi\pth{\frac pq} - W_m(0)}\\
&\geq \frac{q}{2}\abs{x}
- A \pth{W(p)
            + q \psi\pth{\frac p q} + q\mu}\\
&= \frac q2 \abs{x} - A q \pth{W\pth{\frac pq}
    +\psi\pth{\frac p q} + \mu}\\
&\geq \frac q2 \abs{x} - 2Aq\mu \geq 2K,
\end{align*}
where we used the one-homogeneity \eqref{W-bound} of $W$
and \eqref{mu-def}.
\end{proof}

With the constructed barriers, we are able to finish the proof of
the existence theorem.

\begin{proof}[Proof of Theorem~\ref{th:existence}]
Let $W_m$ be a sequence that approximates $W$
as in Section~\ref{sec:stability}.
By Proposition~\ref{pr:Em-properties},
the approximate problem \eqref{approximate-problem}
with initial data $u_0$
has a unique continuous solution $u_m$ on $\Tn \times [0,\infty)$.
Since function $(x,t) \mapsto F(0,0) t + \al$ is a solution
of \eqref{approximate-problem} for any $m$ and $\al \in \R$,
$u_m$ are locally uniformly bounded by the comparison principle.
Therefore the stability result,
Theorem~\ref{th:stability},
yields that
$\ou = \halflimsup_{m\to\infty} u_m$
is a subsolution of \eqref{tvf}
and $\uu = \halfliminf_{m\to\infty} u_m$
is a supersolution of \eqref{tvf}.
Clearly $\uu \leq \ou$.

We are left to prove that $\ou(x,0) \leq u_0 \leq \uu(x,0)$
since then the comparison principle,
Theorem~\ref{th:comparison},
yields that $\ou = \uu $ on $\Tn \times [0,\infty)$
and $\ou = \uu$ is the unique solution of \eqref{tvf}
with initial data $u_0$.

Let us thus set $K := \sup_\Tn \abs{u_0} < \infty$
and choose $\xi \in \Tn$ and $\e > 0$.
We shall show that $\ou(\xi, 0) \leq u_0(\xi) + 2\e$.
By continuity, there exists $\de>0$ such that
$u_0(x) \leq u_0(\xi) +\e$
for $x \in B_\de(\xi)$.
Let $m_0, A$ and $q$ be the constants given by
Lemma~\ref{le:lower-bound-W} and define
\begin{align*}
\phi_m(x,t) := \inf_{k \in \Z^n} \overline\phi_{m;A,q}(x + k - \xi, t)
+ u_0(\xi) + \e.
\end{align*}
Observe that $\phi_m$ is a viscosity supersolution of
\eqref{approximate-problem} for every $m$.
Moreover, by Lemma~\ref{le:lower-bound-W} and \eqref{le:conj-estimate},
and the choice of the parameters,
$u_0 \leq \phi_m(\cdot, 0)$ on $\Tn$ for all $m > m_0$.
Therefore the comparison principle yields
$u_m \leq \phi_m$ on $\Tn \times [0, \infty)$.
Finally, it is easy to observe that $\phi_m(\xi, 0) \leq u_0(\xi) + 2\e$
for all sufficiently large $m$.
Since $\phi_m$ are $q$-Lipschitz continuous in space by
Lemma~\ref{le:conj-estimate},
we have
\begin{align*}
u_m(x,t) \leq \phi_m(x,t) \leq \be_{A,q} t + q\abs{x - \xi} + u_0(\xi) + 2\e
\end{align*}
for all large $m$.
Hence $\ou(\xi,0) \leq u_0(\xi) + 2\e$.

Since $\e$ was arbitrary, we conclude that $\ou(\xi, 0) \leq u_0$.
A similar argument with $\underline\phi$
yields $\uu(\xi,0) \geq u_0$.

A standard argument yields the Lipschitz continuity of the solution.
\end{proof}

\parahead{Acknowledgments}
The work of the second author is partly supported by Japan Society for the Promotion of Science through grants
Kiban (S) 21224001,
Kiban (A) 23244015 and Houga 25610025.

\begin{bibdiv}
\begin{biblist}

\bib{AC09}{article}{
   author={Alter, Fran{\c{c}}ois},
   author={Caselles, Vicent},
   title={Uniqueness of the Cheeger set of a convex body},
   journal={Nonlinear Anal.},
   volume={70},
   date={2009},
   number={1},
   pages={32--44},
   issn={0362-546X},
   review={\MR{2468216 (2009m:52005)}},
   doi={10.1016/j.na.2007.11.032},
}

\bib{ACC05}{article}{
   author={Alter, F.},
   author={Caselles, V.},
   author={Chambolle, A.},
   title={A characterization of convex calibrable sets in $\mathbb R^N$},
   journal={Math. Ann.},
   volume={332},
   date={2005},
   number={2},
   pages={329--366},
   issn={0025-5831},
   review={\MR{2178065 (2006g:35091)}},
   doi={10.1007/s00208-004-0628-9},
}

\bib{ACM02a}{article}{
   author={Andreu-Vaillo, Fuensanta},
   author={Caselles, Vincent},
   author={Maz{\'o}n, Jos{\'e} Mar{\'{\i}}a},
   title={A parabolic quasilinear problem for linear growth functionals},
   journal={Rev. Mat. Iberoamericana},
   volume={18},
   date={2002},
   number={1},
   pages={135--185},
   issn={0213-2230},
   review={\MR{1924690 (2003j:35142)}},
   doi={10.4171/RMI/314},
}

\bib{ACM02b}{article}{
   author={Andreu-Vaillo, Fuensanta},
   author={Caselles, Vincent},
   author={Maz{\'o}n, Jos{\'e} Mar{\'{\i}}a},
   title={Existence and uniqueness of a solution for a parabolic quasilinear
   problem for linear growth functionals with $L^1$ data},
   journal={Math. Ann.},
   volume={322},
   date={2002},
   number={1},
   pages={139--206},
   issn={0025-5831},
   review={\MR{1883392 (2002k:35139)}},
   doi={10.1007/s002080100270},
}

\bib{ACM04}{book}{
   author={Andreu-Vaillo, Fuensanta},
   author={Caselles, Vincent},
   author={Maz{\'o}n, Jos{\'e} Mar{\'{\i}}a},
   title={Parabolic quasilinear equations minimizing linear growth
   functionals},
   series={Progress in Mathematics},
   volume={223},
   publisher={Birkh\"auser Verlag},
   place={Basel},
   date={2004},
   pages={xiv+340},
   isbn={3-7643-6619-2},
   review={\MR{2033382 (2005c:35002)}},
}

\bib{AG89}{article}{
   author={Angenent, Sigurd},
   author={Gurtin, Morton E.},
   title={Multiphase thermomechanics with interfacial structure. II.\
   Evolution of an isothermal interface},
   journal={Arch. Ration. Mech. Anal.},
   volume={108},
   date={1989},
   number={4},
   pages={323--391},
   issn={0003-9527},
   review={\MR{1013461 (91d:73004)}},
   doi={10.1007/BF01041068},
}

\bib{Anzellotti}{article}{
   author={Anzellotti, Gabriele},
   title={Pairings between measures and bounded functions and compensated
   compactness},
   journal={Ann. Mat. Pura Appl.},
   volume={135},
   date={1983},
   pages={293--318},
   issn={0003-4622},
   review={\MR{750538 (85m:46042)}},
   doi={10.1007/BF01781073},
}

\bib{Attouch}{book}{
   author={Attouch, H.},
   title={Variational convergence for functions and operators},
   series={Applicable Mathematics Series},
   publisher={Pitman (Advanced Publishing Program)},
   place={Boston, MA},
   date={1984},
   pages={xiv+423},
   isbn={0-273-08583-2},
   review={\MR{773850 (86f:49002)}},
}

\bib{B10}{article}{
   author={Bellettini, G.},
   title={An introduction to anisotropic and crystalline mean curvature flow},
   journal={Hokkaido Univ. Tech. Rep. Ser. in Math.},
   volume={145},
   date={2010},
   pages={102--162},
}

\bib{BCCN06}{article}{
   author={Bellettini, Giovanni},
   author={Caselles, Vicent},
   author={Chambolle, Antonin},
   author={Novaga, Matteo},
   title={Crystalline mean curvature flow of convex sets},
   journal={Arch. Ration. Mech. Anal.},
   volume={179},
   date={2006},
   number={1},
   pages={109--152},
   issn={0003-9527},
   review={\MR{2208291 (2007a:53126)}},
   doi={10.1007/s00205-005-0387-0},
}

\bib{BGN00}{article}{
   author={Bellettini, G.},
   author={Goglione, R.},
   author={Novaga, M.},
   title={Approximation to driven motion by crystalline curvature in two
   dimensions},
   journal={Adv. Math. Sci. Appl.},
   volume={10},
   date={2000},
   number={1},
   pages={467--493},
   issn={1343-4373},
   review={\MR{1769163 (2001i:53109)}},
}

\bib{BN00}{article}{
   author={Bellettini, G.},
   author={Novaga, M.},
   title={Approximation and comparison for nonsmooth anisotropic motion by
   mean curvature in ${\bf R}^N$},
   journal={Math. Models Methods Appl. Sci.},
   volume={10},
   date={2000},
   number={1},
   pages={1--10},
   issn={0218-2025},
   review={\MR{1749692 (2001a:53106)}},
   doi={10.1142/S0218202500000021},
}

\bib{BNP99}{article}{
   author={Bellettini, G.},
   author={Novaga, M.},
   author={Paolini, M.},
   title={Facet-breaking for three-dimensional crystals evolving by mean
   curvature},
   journal={Interfaces Free Bound.},
   volume={1},
   date={1999},
   number={1},
   pages={39--55},
   issn={1463-9963},
   review={\MR{1865105 (2003i:53099)}},
   doi={10.4171/IFB/3},
}

\bib{BNP01a}{article}{
   author={Bellettini, G.},
   author={Novaga, M.},
   author={Paolini, M.},
   title={On a crystalline variational problem. I. First variation and
   global $L^\infty$ regularity},
   journal={Arch. Ration. Mech. Anal.},
   volume={157},
   date={2001},
   number={3},
   pages={165--191},
   issn={0003-9527},
   review={\MR{1826964 (2002c:49072a)}},
   doi={10.1007/s002050010127},
}
		
\bib{BNP01b}{article}{
   author={Bellettini, G.},
   author={Novaga, M.},
   author={Paolini, M.},
   title={On a crystalline variational problem. II. $BV$ regularity and
   structure of minimizers on facets},
   journal={Arch. Ration. Mech. Anal.},
   volume={157},
   date={2001},
   number={3},
   pages={193--217},
   issn={0003-9527},
   review={\MR{1826965 (2002c:49072b)}},
   doi={10.1007/s002050100126},
}

\bib{BNP01IFB}{article}{
   author={Bellettini, G.},
   author={Novaga, M.},
   author={Paolini, M.},
   title={Characterization of facet breaking for nonsmooth mean curvature
   flow in the convex case},
   journal={Interfaces Free Bound.},
   volume={3},
   date={2001},
   number={4},
   pages={415--446},
   issn={1463-9963},
   review={\MR{1869587 (2002k:53127)}},
   doi={10.4171/IFB/47},
}		

\bib{Br}{book}{
   label={Br},
   author={Br{\'e}zis, H.},
   title={Op\'erateurs maximaux monotones et semi-groupes de contractions
   dans les espaces de Hilbert},
   language={French},
   note={North-Holland Mathematics Studies, No. 5. Notas de Matem\'atica
   (50)},
   publisher={North-Holland Publishing Co.},
   place={Amsterdam},
   date={1973},
   pages={vi+183},
   review={\MR{0348562 (50 \#1060)}},
}

\bib{CasellesChambolle06}{article}{
   author={Caselles, Vicent},
   author={Chambolle, Antonin},
   title={Anisotropic curvature-driven flow of convex sets},
   journal={Nonlinear Anal.},
   volume={65},
   date={2006},
   number={8},
   pages={1547--1577},
   issn={0362-546X},
   review={\MR{2248685 (2007d:35143)}},
   doi={10.1016/j.na.2005.10.029},
}

\bib{CCMN08}{article}{
   author={Caselles, V.},
   author={Chambolle, A.},
   author={Moll, S.},
   author={Novaga, M.},
   title={A characterization of convex calibrable sets in $\mathbb R^N$ with
   respect to anisotropic norms},
   language={English, with English and French summaries},
   journal={Ann. Inst. H. Poincar\'e Anal. Non Lin\'eaire},
   volume={25},
   date={2008},
   number={4},
   pages={803--832},
   issn={0294-1449},
   review={\MR{2436794 (2009g:35095)}},
   doi={10.1016/j.anihpc.2008.04.003},
}

\bib{CCN07}{article}{
   author={Caselles, Vicent},
   author={Chambolle, Antonin},
   author={Novaga, Matteo},
   title={Uniqueness of the Cheeger set of a convex body},
   journal={Pacific J. Math.},
   volume={232},
   date={2007},
   number={1},
   pages={77--90},
   issn={0030-8730},
   review={\MR{2358032 (2008j:49012)}},
   doi={10.2140/pjm.2007.232.77},
}

\bib{CGG}{article}{
   label={CGGo},
   author={Chen, Yun Gang},
   author={Giga, Yoshikazu},
   author={Goto, Shun'ichi},
   title={Uniqueness and existence of viscosity solutions of generalized
   mean curvature flow equations},
   journal={J. Differential Geom.},
   volume={33},
   date={1991},
   number={3},
   pages={749--786},
   issn={0022-040X},
   review={\MR{1100211 (93a:35093)}},
}

\bib{CIL}{article}{
   author={Crandall, Michael G.},
   author={Ishii, Hitoshi},
   author={Lions, Pierre-Louis},
   title={User's guide to viscosity solutions of second order partial
   differential equations},
   journal={Bull. Amer. Math. Soc. (N.S.)},
   volume={27},
   date={1992},
   number={1},
   pages={1--67},
   issn={0273-0979},
   review={\MR{1118699 (92j:35050)}},
   doi={10.1090/S0273-0979-1992-00266-5},
}

\bib{DZ11}{book}{
   author={Delfour, M. C.},
   author={Zol{\'e}sio, J.-P.},
   title={Shapes and geometries},
   series={Advances in Design and Control},
   volume={22},
   edition={2},
   note={Metrics, analysis, differential calculus, and optimization},
   publisher={Society for Industrial and Applied Mathematics (SIAM)},
   place={Philadelphia, PA},
   date={2011},
   pages={xxiv+622},
   isbn={978-0-898719-36-9},
   review={\MR{2731611 (2012c:49002)}},
   doi={10.1137/1.9780898719826},
}

\bib{EGS}{article}{
   author={Elliott, Charles M.},
   author={Gardiner, Andrew R.},
   author={Sch{\"a}tzle, Reiner},
   title={Crystalline curvature flow of a graph in a variational setting},
   journal={Adv. Math. Sci. Appl.},
   volume={8},
   date={1998},
   number={1},
   pages={425--460},
   issn={1343-4373},
   review={\MR{1623315 (99e:35227)}},
}

\bib{Evans}{book}{
   author={Evans, Lawrence C.},
   title={Partial differential equations},
   series={Graduate Studies in Mathematics},
   volume={19},
   edition={2},
   publisher={American Mathematical Society},
   place={Providence, RI},
   date={2010},
   pages={xxii+749},
   isbn={978-0-8218-4974-3},
   review={\MR{2597943 (2011c:35002)}},
}

\bib{ES}{article}{
   author={Evans, L. C.},
   author={Spruck, J.},
   title={Motion of level sets by mean curvature. I},
   journal={J. Differential Geom.},
   volume={33},
   date={1991},
   number={3},
   pages={635--681},
   issn={0022-040X},
   review={\MR{1100206 (92h:35097)}},
}

\bib{FM}{article}{
   author={Fujiwara, Daisuke},
   author={Morimoto, Hiroko},
   title={An $L_{r}$-theorem of the Helmholtz decomposition of vector
   fields},
   journal={J. Fac. Sci. Univ. Tokyo Sect. IA Math.},
   volume={24},
   date={1977},
   number={3},
   pages={685--700},
   issn={0040-8980},
   review={\MR{0492980 (58 \#12023)}},
}

\bib{FG}{article}{
   author={Fukui, Toshihide},
   author={Giga, Yoshikazu},
   title={Motion of a graph by nonsmooth weighted curvature},
   conference={
      title={World Congress of Nonlinear Analysts '92},
      address={Tampa, FL},
      date={1992},
   },
   book={
      publisher={de Gruyter},
      place={Berlin},
      volume={I--IV},
   },
   date={1996},
   pages={47--56},
   review={\MR{1389060}},
}

\bib{GG98ARMA}{article}{
   author={Giga, Mi-Ho},
   author={Giga, Yoshikazu},
   title={Evolving graphs by singular weighted curvature},
   journal={Arch. Rational Mech. Anal.},
   volume={141},
   date={1998},
   number={2},
   pages={117--198},
   issn={0003-9527},
   review={\MR{1615520 (99j:35118)}},
}

\bib{GG98DS}{article}{
   author={Giga, Mi-Ho},
   author={Giga, Yoshikazu},
   title={A subdifferential interpretation of crystalline motion under
   nonuniform driving force},
   note={Dynamical systems and differential equations, Vol. I (Springfield,
   MO, 1996)},
   journal={Discrete Contin. Dynam. Systems},
   date={1998},
   number={Added Volume I},
   pages={276--287},
   issn={1078-0947},
   review={\MR{1720610 (2001c:35125)}},
}

\bib{GG99CPDE}{article}{
   author={Giga, Mi-Ho},
   author={Giga, Yoshikazu},
   title={Stability for evolving graphs by nonlocal weighted curvature},
   journal={Comm. Partial Differential Equations},
   volume={24},
   date={1999},
   number={1-2},
   pages={109--184},
   issn={0360-5302},
   review={\MR{1671993 (2000a:35103)}},
   doi={10.1080/03605309908821419},
}

\bib{GG00Gakuto}{article}{
   author={Giga, Mi-Ho},
   author={Giga, Yoshikazu},
   title={Crystalline and level set flow---convergence of a crystalline
   algorithm for a general anisotropic curvature flow in the plane},
   conference={
      title={Free boundary problems: theory and applications, I},
      address={Chiba},
      date={1999},
   },
   book={
      series={GAKUTO Internat. Ser. Math. Sci. Appl.},
      volume={13},
      publisher={Gakk\=otosho},
      place={Tokyo},
   },
   date={2000},
   pages={64--79},
   review={\MR{1793023 (2002f:53117)}},
}

\bib{GG01ARMA}{article}{
   author={Giga, Mi-Ho},
   author={Giga, Yoshikazu},
   title={Generalized motion by nonlocal curvature in the plane},
   journal={Arch. Ration. Mech. Anal.},
   volume={159},
   date={2001},
   number={4},
   pages={295--333},
   issn={0003-9527},
   review={\MR{1860050 (2002h:53117)}},
   doi={10.1007/s002050100154},
}

\bib{GG04}{article}{
   label={GG04},
   author={Giga, Mi-Ho},
   author={Giga, Yoshikazu},
   title={A PDE approach for motion of phase-boundaries by a singular
   interfacial energy},
   conference={
      title={Stochastic analysis on large scale interacting systems},
   },
   book={
      series={Adv. Stud. Pure Math.},
      volume={39},
      publisher={Math. Soc. Japan},
      place={Tokyo},
   },
   date={2004},
   pages={213--232},
   review={\MR{2073335 (2005e:35032)}},
}

\bib{GG10}{article}{
   label={GG10},
   author={Giga, Mi-Ho},
   author={Giga, Yoshikazu},
   title={Very singular diffusion equations: second and fourth order
   problems},
   journal={Jpn. J. Ind. Appl. Math.},
   volume={27},
   date={2010},
   number={3},
   pages={323-345},
   issn={0916-7005},
   review={\MR{2746654 (2011h:35149)}},
   doi={10.1007/s13160-010-0020-y},
}

\bib{GGNakayasu}{article}{
   author={Giga, Mi-Ho},
   author={Giga, Yoshikazu},
   author={Nakayasu, A.},
   title={On general existence results for one-dimensional singular diffusion equations with spatially inhomogeneous driving force},
   journal={Hokkaido University Preprint Series in Math., \#1032},
   status={to appear in De Giorgi Center Publication},
   date={2013},
   eprint={http://eprints3.math.sci.hokudai.ac.jp/2266/},
}

\bib{GGP13}{article}{
    author={Giga, Mi-Ho},
    author={Giga, Yoshikazu},
    author={Po\v{z}\'{a}r, Norbert},
    title={Periodic total variation flow of non-divergence type in $\Rn$},
    journal={Hokkaido University Preprint Series in Math., \#1028},
    date={2013},
    eprint={http://eprints3.math.sci.hokudai.ac.jp/2251/},
}

\bib{GGR11}{article}{
   author={Giga, Mi-Ho},
   author={Giga, Yoshikazu},
   author={Rybka, Piotr},
   title={A comparison principle for singular diffusion equations with spatially inhomogeneous driving force for graphs},
   journal={Hokkaido University Preprint Series in Math., \#981},
   date={2011},
   eprint={http://eprints3.math.sci.hokudai.ac.jp/2160/},
}

\bib{G04}{article}{
   label={G04},
   author={Giga, Yoshikazu},
   title={Singular diffusivity---facets, shocks and more},
   conference={
      title={Applied mathematics entering the 21st century},
   },
   book={
      editor={Hill, J. M.},
      editor={Moore, R.},
      publisher={SIAM},
      place={Philadelphia, PA},
   },
   date={2004},
   pages={121--138},
   review={\MR{2296265}},
}

\bib{Giga06}{book}{
   author={Giga, Yoshikazu},
   title={Surface evolution equations},
   series={Monographs in Mathematics},
   volume={99},
   note={A level set approach},
   publisher={Birkh\"auser Verlag},
   place={Basel},
   date={2006},
   pages={xii+264},
   isbn={978-3-7643-2430-8},
   isbn={3-7643-2430-9},
   review={\MR{2238463 (2007j:53071)}},
}

\bib{GGoR11}{article}{
   label={GGoR11},
   author={Giga, Yoshikazu},
   author={G{\'o}rka, Przemys{\l}aw},
   author={Rybka, Piotr},
   title={Evolution of regular bent rectangles by the driven crystalline
   curvature flow in the plane with a non-uniform forcing term},
   journal={Adv. Differential Equations},
   status={to appear},
}

\bib{GGM}{article}{
   label={GGuM},
   author={Giga, Yoshikazu},
   author={Gurtin, Morton E.},
   author={Matias, Jos{\'e}},
   title={On the dynamics of crystalline motions},
   journal={Japan J. Indust. Appl. Math.},
   volume={15},
   date={1998},
   number={1},
   pages={7--50},
   issn={0916-7005},
   review={\MR{1610305 (99h:73008)}},
   doi={10.1007/BF03167395},
}

\bib{GPR}{article}{
   author={Giga, Yoshikazu},
   author={Paolini, Maurizio},
   author={Rybka, Piotr},
   title={On the motion by singular interfacial energy},
   note={Recent topics in mathematics moving toward science and
   engineering},
   journal={Japan J. Indust. Appl. Math.},
   volume={18},
   date={2001},
   number={2},
   pages={231--248},
   issn={0916-7005},
   review={\MR{1842909 (2002g:53115)}},
}

\bib{GR08}{article}{
   label={GR08},
   author={Giga, Yoshikazu},
   author={Rybka, Piotr},
   title={Facet bending in the driven crystalline curvature flow in the
   plane},
   journal={J. Geom. Anal.},
   volume={18},
   date={2008},
   number={1},
   pages={109--147},
   issn={1050-6926},
   review={\MR{2365670 (2010a:53136)}},
   doi={10.1007/s12220-007-9004-9},
}

\bib{GR09}{article}{
   label={GR09},
   author={Giga, Yoshikazu},
   author={Rybka, Piotr},
   title={Facet bending driven by the planar crystalline curvature with a
   generic nonuniform forcing term},
   journal={J. Differential Equations},
   volume={246},
   date={2009},
   number={6},
   pages={2264--2303},
   issn={0022-0396},
   review={\MR{2498842 (2010c:35084)}},
   doi={10.1016/j.jde.2009.01.009},
}

\bib{Gir}{article}{
   label={Gir},
   author={Gir{\~a}o, Pedro Martins},
   title={Convergence of a crystalline algorithm for the motion of a simple
   closed convex curve by weighted curvature},
   journal={SIAM J. Numer. Anal.},
   volume={32},
   date={1995},
   number={3},
   pages={886--899},
   issn={0036-1429},
   review={\MR{1335660 (96c:65144)}},
   doi={10.1137/0732041},
}
		
\bib{GirK}{article}{
   label={GirK},
   author={Gir{\~a}o, Pedro Martins},
   author={Kohn, Robert V.},
   title={Convergence of a crystalline algorithm for the heat equation in
   one dimension and for the motion of a graph by weighted curvature},
   journal={Numer. Math.},
   volume={67},
   date={1994},
   number={1},
   pages={41--70},
   issn={0029-599X},
   review={\MR{1258974 (94m:65138)}},
   doi={10.1007/s002110050017},
}

\bib{Giusti78}{article}{
   label={Gi},
   author={Giusti, Enrico},
   title={On the equation of surfaces of prescribed mean curvature.
   Existence and uniqueness without boundary conditions},
   journal={Invent. Math.},
   volume={46},
   date={1978},
   number={2},
   pages={111--137},
   issn={0020-9910},
   review={\MR{0487722 (58 \#7337)}},
}

\bib{Goto94}{article}{
   label={Go},
   author={Goto, Shun'ichi},
   title={Generalized motion of hypersurfaces whose growth speed depends
   superlinearly on the curvature tensor},
   journal={Differential Integral Equations},
   volume={7},
   date={1994},
   number={2},
   pages={323--343},
   issn={0893-4983},
   review={\MR{1255892 (94m:35143)}},
} 
 
\bib{IS}{article}{
   label={IS},
   author={Ishii, Hitoshi},
   author={Souganidis, Panagiotis},
   title={Generalized motion of noncompact hypersurfaces with velocity
   having arbitrary growth on the curvature tensor},
   journal={Tohoku Math. J. (2)},
   volume={47},
   date={1995},
   number={2},
   pages={227--250},
   issn={0040-8735},
   review={\MR{1329522 (96e:35069)}},
   doi={10.2748/tmj/1178225593},
}

\bib{Ishiwata08}{article}{
   label={I},
   author={Ishiwata, Tetsuya},
   title={Motion of non-convex polygons by crystalline curvature and almost
   convexity phenomena},
   journal={Japan J. Indust. Appl. Math.},
   volume={25},
   date={2008},
   number={2},
   pages={233--253},
   issn={0916-7005},
   review={\MR{2431681 (2009h:53148)}},
}

\bib{KL06}{article}{
   author={Kawohl, Bernd},
   author={Lachand-Robert, Thomas},
   title={Characterization of Cheeger sets for convex subsets of the plane},
   journal={Pacific J. Math.},
   volume={225},
   date={2006},
   number={1},
   pages={103--118},
   issn={0030-8730},
   review={\MR{2233727 (2007e:52002)}},
   doi={10.2140/pjm.2006.225.103},
} 

\bib{KMR}{article}{
   author={Kielak, Karolina},
   author={Mucha, Piotr B.},
   author={Rybka, Piotr},
year={2013},
issn={1424-3199},
journal={J. Evol. Equ.},
volume={13},
number={1},
pages={21-49},
doi={10.1007/s00028-012-0167-x},
title={Almost classical solutions to the total variation flow},
url={http://dx.doi.org/10.1007/s00028-012-0167-x},
} 

\bib{Ko}{article}{
   author={K{\=o}mura, Yukio},
   title={Nonlinear semi-groups in Hilbert space},
   journal={J. Math. Soc. Japan},
   volume={19},
   date={1967},
   pages={493--507},
   issn={0025-5645},
   review={\MR{0216342 (35 \#7176)}},
}

\bib{Moll05}{article}{
   author={Moll, J. S.},
   title={The anisotropic total variation flow},
   journal={Math. Ann.},
   volume={332},
   date={2005},
   number={1},
   pages={177--218},
   issn={0025-5831},
   review={\MR{2139257 (2006d:35113)}},
   doi={10.1007/s00208-004-0624-0},
}

\bib{MR08}{article}{
   author={Mucha, Piotr B.},
   author={Rybka, Piotr},
   title={A caricature of a singular curvature flow in the plane},
   journal={Nonlinearity},
   volume={21},
   date={2008},
   number={10},
   pages={2281--2316},
   issn={0951-7715},
   review={\MR{2439480 (2009k:35125)}},
   doi={10.1088/0951-7715/21/10/005},
}

\bib{OhnumaSato97}{article}{
   author={Ohnuma, Masaki},
   author={Sato, Koh},
   title={Singular degenerate parabolic equations with applications to the
   $p$-Laplace diffusion equation},
   journal={Comm. Partial Differential Equations},
   volume={22},
   date={1997},
   number={3-4},
   pages={381--411},
   issn={0360-5302},
   review={\MR{1443043 (98d:35118)}},
   doi={10.1080/03605309708821268},
}

\bib{Taylor91}{article}{
   author={Taylor, Jean E.},
   title={Constructions and conjectures in crystalline nondifferential
   geometry},
   conference={
      title={Differential geometry},
   },
   book={
      series={Pitman Monogr. Surveys Pure Appl. Math.},
      volume={52},
      publisher={Longman Sci. Tech.},
      place={Harlow},
   },
   date={1991},
   pages={321--336},
   review={\MR{1173051 (93e:49004)}},
}

\end{biblist}
\end{bibdiv}

\end{document}